\theoremstyle{plain}
\newtheorem{theorem}{Theorem}[section]
\newtheorem{lemma}{Lemma}[section]
\newtheorem{prop}{Proposition}[section]
\theoremstyle{definition}
\newtheorem{definition}{Definition}[section]
\newtheorem{example}{Example}[section]
\begin{document}

\title[Laplacians on smooth distributions as $C^*$-algebra multipliers]{Laplacians on smooth distributions \\ as $C^*$-algebra multipliers}

\author{Yuri A. Kordyukov}
 
\address{Institute of mathematics, Ufa scientific center, Russian Academy of Sciences}
 
\email{yurikor@matem.anrb.ru}
\thanks{Supported by RFBR (grant No.~16-01-00312-a).}

\keywords{foliation, Hilbert module, Laplacian, hypoelliptic operators, smooth distribution, multiplier}

\subjclass{58J60, 53C17, 46L08, 58B34}

\begin{abstract}
In this paper we continue the study of spectral properties of Laplacians associated with an arbitrary smooth distribution on a compact manifold, initiated in a previous paper. Under assumption that the singular foliation generated by the distribution is smooth, we prove that the Laplacian associated with the distribution defines an unbounded regular self-adjoint operator in some Hilbert module over the foliation $C^*$-algebra. 
\end{abstract}


\maketitle


\section{Introduction}\label{s:intro}
In this paper we continue the study of spectral properties of Laplacians associated with an arbitrary smooth distribution on a compact manifold, initiated in \cite{hypo}. First, we recall their definition. 

Let $M$ be a connected compact smooth manifold of dimension $n$ equipped with a smooth positive density $\mu$. Let $H$ be a smooth distribution on $M$ of rank $d$ (that is, $H$ is a smooth subbundle of the tangent bundle $TM$ of $M$) and $g$ a smooth fiberwise inner product on $H$. We define the horizontal differential $d_Hf$ of a function 
$f\in C^\infty(M)$ as the restriction of its differential $df$ to
$H\subset TM$. Thus, $d_Hf$ is a section of the dual bundle $H^*$ of
$H$, $d_Hf\in C^\infty(M,H^*)$, and we get a well-defined first order differential operator $d_H : C^\infty(M) \to C^\infty(M,H^*)$. The Riemannian metric $g$ and the positive smooth density $\mu$ induce inner products on $C^\infty(M)$ and $C^\infty(M,H^*)$, that allows us to consider the adjoint operator $d^*_H : C^\infty(M,H^*)\to C^\infty(M)$ of $d_H$. The Laplacian $\Delta_H$ associated with $(H,g,\mu)$ is the second order differential operator on $C^\infty(M)$ given by
\[
\Delta_H=d^*_Hd_H.
\]
Let $X_1,\ldots, X_d$ be a local orthonormal frame in $H$ defined on an open subset $\Omega\subset M$. It can be easily checked that the restriction of the operator $\Delta_H$ to $\Omega$ is given by
\[
\Delta_H\left|_\Omega\right.=\sum_{j=1}^d X^*_jX_j.
\]
The operator $\Delta_H$ can be also defined by means of the associated quadratic form
\[
(\Delta_Hu,u)=\int_M |d_Hu(x)|_g^2\,d\mu(x), \quad u\in C^\infty(M).
\]

We will consider the Laplacian $\Delta_H$ as an unbounded linear operator in the Hilbert space $L^2(M,\mu)$ with initial domain $C^\infty(M)$.

\begin{theorem}[\cite{hypo}]\label{t:ss}
The operator $\Delta_H$ is an essentially self-adjoint operator in $L^2(M,\mu)$ (that is, its closure is a self-adjoint operator).
\end{theorem}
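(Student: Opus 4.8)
The plan is to realise $\Delta_H$ as an orthogonal direct summand of the square of a single first-order, formally self-adjoint differential operator on $M$, and then to quote Chernoff's theorem on the essential self-adjointness of powers of generators of hyperbolic systems.

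First I would assemble the relevant bundle. Let $E=\underline{\mathbb{C}}\oplus H^{*}$ be the smooth Hermitian vector bundle over $M$, where $\underline{\mathbb{C}}=M\times\mathbb{C}$ carries its standard metric and $H^{*}$ the fibrewise metric dual to $g$; then $C^{\infty}(M,E)=C^{\infty}(M)\oplus C^{\infty}(M,H^{*})$ and, using the density $\mu$, $L^{2}(M,E)=L^{2}(M,\mu)\oplus L^{2}(M,H^{*})$. On $C^{\infty}(M,E)$ consider the first-order differential operator
\[
\mathcal{D}=\begin{pmatrix}0 & d_{H}^{*}\\ d_{H} & 0\end{pmatrix}.
\]
Since $d_{H}^{*}$ is, by construction, the formal adjoint of $d_{H}$ for the very inner products used to define $\Delta_{H}$, the operator $\mathcal{D}$ is formally self-adjoint, and its principal symbol $\sigma_{\mathcal{D}}(x,\xi)\in\operatorname{End}(E_{x})$ is Hermitian for every $(x,\xi)\in T^{*}M$, so $\mathcal{D}$ is of symmetric-hyperbolic type. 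A direct computation gives
\[
\mathcal{D}^{2}=\begin{pmatrix}d_{H}^{*}d_{H} & 0\\ 0 & d_{H}d_{H}^{*}\end{pmatrix}=\begin{pmatrix}\Delta_{H} & 0\\ 0 & d_{H}d_{H}^{*}\end{pmatrix},
\]
so that $\mathcal{D}^{2}$, viewed as an unbounded operator in $L^{2}(M,E)$ with initial domain $C^{\infty}(M,E)$, is the orthogonal direct sum $\Delta_{H}\oplus(d_{H}d_{H}^{*})$.

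Next I would invoke Chernoff's theorem. Since $M$ is compact, the coefficients of $\mathcal{D}$ are bounded, hence $\mathcal{D}$ has finite propagation speed; consequently the Cauchy problem $\partial_{t}\psi=i\mathcal{D}\psi$ is globally well posed and $\mathcal{D}$ generates a strongly continuous one-parameter unitary group on $L^{2}(M,E)$. Chernoff's theorem then asserts that \emph{every} power $\mathcal{D}^{k}$ is essentially self-adjoint with initial domain $C^{\infty}(M,E)=C_{c}^{\infty}(M,E)$; in particular $\mathcal{D}^{2}$ is essentially self-adjoint. Finally I would descend this to $\Delta_{H}$: for the direct-sum decomposition one has $(\mathcal{D}^{2})^{*}=\Delta_{H}^{*}\oplus(d_{H}d_{H}^{*})^{*}$, and a symmetric operator is essentially self-adjoint precisely when its adjoint is symmetric; since $(\mathcal{D}^{2})^{*}$ is symmetric, so is $\Delta_{H}^{*}$, i.e. $\Delta_{H}$ is essentially self-adjoint. (As a byproduct this also shows $d_{H}d_{H}^{*}$ is essentially self-adjoint.)

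The step I expect to carry the real content is the use of Chernoff's theorem, together with the verification that $\mathcal{D}$ is exactly of the type it covers: a formally self-adjoint first-order differential operator on the complete (indeed compact) manifold $M$, with at most linearly growing — here bounded — propagation speed. Everything else is routine: the symbol computation showing $\mathcal{D}$ is symmetric hyperbolic, the block identity $\mathcal{D}^{2}=\Delta_{H}\oplus(d_{H}d_{H}^{*})$, and the elementary fact that a symmetric operator $A\oplus B$ is essentially self-adjoint if and only if each of $A,B$ is. I would also note that one can bypass Chernoff and argue directly via a Friedrichs-mollifier computation on the quadratic form $u\mapsto(\Delta_{H}u,u)=\|d_{H}u\|_{g}^{2}$, showing that every $u\in L^{2}(M,\mu)$ with $\Delta_{H}u\in L^{2}(M,\mu)$ lies in the form domain of $d_{H}$ and that the maximal operator $\Delta_{H}^{\max}$ coincides with the Friedrichs extension $(\overline{d_{H}})^{*}\,\overline{d_{H}}$; but the route through $\mathcal{D}$ is shorter and conceptually cleaner.
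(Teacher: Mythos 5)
Your proposal is correct and matches the paper's stated approach: the remark immediately following Theorem~\ref{t:ss} says the proof in \cite{hypo} rests on Chernoff's theorem on powers of generators of symmetric hyperbolic systems via finite propagation speed, which is exactly what you implement by forming the Dirac-type operator $\mathcal{D}=\left(\begin{smallmatrix}0 & d_H^*\\ d_H & 0\end{smallmatrix}\right)$ on $\underline{\mathbb{C}}\oplus H^*$, noting $\mathcal{D}^2=\Delta_H\oplus d_Hd_H^*$, and descending essential self-adjointness to the block $\Delta_H$.
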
 
 
Theorem~\ref{t:ss} is proved, using a well-known result by Chernoff \cite{Chernoff} based on some facts from theory of linear symmetric first order hyperbolic systems, in particular, using in an essential way the fact of finite propagation speed of wave solutions of such equations.

Let us consider some examples.

\begin{example} The case when $H=TM$ and $g$ is a Riemannian metric on $M$, $g=\sum_{i,j}g_{ij}(x)dx_idx_j$, corresponds to a structure of Riemannian manifold on $M$. In this case, there is a canonical choice of $\mu$. The density $\mu$ can be taken to be the Riemannian volume form of $g$: $\mu =\sqrt{\det g}dx_1\ldots dx_n$. Then $\Delta_H$ coincides with the Laplace-Beltrami operator of the Riemannian metric $g$:
\[
\Delta_H=-\frac{1}{\sqrt{\det g}}\sum_{i,j=1}^n\frac{\partial}{\partial x_i}\left( g^{ij}\sqrt{\det g}\frac{\partial}{\partial x_j}\right).
\]
The choice of an arbitrary smooth positive density $\mu =\mu(x) dx_1\ldots dx_n$ corresponds to a structure of weighted manifold on $M$. In this case, $\Delta_H$ is the corresponding weighted Laplacian (cf., for instance, \cite{Grigoryan}):
\[
\Delta_H=-\frac{1}{\mu}\sum_{i,j=1}^n\frac{\partial}{\partial x_i}\left( g^{ij}\mu\frac{\partial}{\partial x_j}\right).
\]
\end{example} 
 
Consider the space $C^\infty(M,TM)$ of smooth vector fields on $M$. We recall that it has a natural structure of $C^\infty(M)$-module: 
\[
a\in C^\infty(M), X\in C^\infty(M,TM)\mapsto a\cdot X\in C^\infty(M,TM),
\]
and a structure of Lie algebra with respect to the Lie bracket operation of vector fields, which is defined for vector fields $X=\sum_iX_i\frac{\partial}{\partial x_i}$ and $Y=\sum_iY_i\frac{\partial}{\partial x_i}$ by
\[
[X,Y]=XY-YX=\sum_i\left(\sum_j X_j\frac{\partial Y_i}{\partial x_j}-\sum_j Y_j\frac{\partial X_i}{\partial x_j}\right)\frac{\partial}{\partial x_i}.
\]
For the distribution $H$, we consider the subspace $C^\infty(M,H)$ of $C^\infty(M,TM)$, which consists of smooth vector fields on $M$, tangent to $H$ at each point: $X(m)\in H_m$ for any $m\in M$.

\begin{example} 
Suppose that the distribution $H$ is completely integrable. We recall that this means that, for any $X,Y\in C^\infty(M,H)$, their Lie bracket $[X,Y]$ is in $C^\infty(M,H)$. Then the classical Frobenius theorem states that there exists a smooth foliation $\mathcal F$ on $M$ such that $H$ coincides with the tangent bundle of $\mathcal F$: $H=T\mathcal F$. In this case, a smooth metric $g$ in fibers of $H$ gives rise to a smooth Riemannian metric along the leaves of $\mathcal F$. In any foliated coordinate neighborhood with coordinates $(x,y)$, $x\in \mathbb R^d$, $y\in \mathbb R^{n-d}$, it has the form
\[
g=\sum_{i,j=1}^dg_{ij}(x,y)dx_idx_j.
\]
In this case, there is no canonical choice of a smooth positive density $\mu$. The operator $\Delta_H$ is the associated leafwise Laplace operator. In a foliated coordinate neighborhood with coordinates $(x,y)$, $x\in \mathbb R^d$, $y\in \mathbb R^{n-d}$, it has the form
\[
\Delta_H=-\frac{1}{\mu}\sum_{i,j=1}^d\frac{\partial}{\partial x_i}\left( g^{ij}(x,y)\mu(x,y)\frac{\partial}{\partial x_j}\right), \quad x\in \mathbb R^d, \quad y\in \mathbb R^{n-d}.
\]
This operator is degenerate elliptic (more precisely, leafwise elliptic) second order differential operator. We observe that it doesn't coincide, in general, with the leafwise differential operator on $M$, given by the family of Laplace-Beltrami operators of Riemannian metrics along the leaves.  

In \cite{Kord95}, the author constructed classes $\tilde{\Psi}^{m,l}(M, {\mathcal  F})$ of anisotropic pseudodifferential operators of different orders in tangential and transverse directions and the corresponding scale of anisotropic Sobolev spaces $H^{s,k}(M,{\mathcal  F})$. This allows us to construct a parametrix (in some sense) for the leafwise Laplacian and, more generally, for an arbitrary longitudinally elliptic operator and investigate various aspects of functional calculus for such operators. 

In \cite{Connes79,Connes80}, Connes developed methods of noncommutative geometry to study various problems of geometry and analysis on foliated manifolds. They are based on constructions of such objects as the holonomy groupoid of the foliation, operator algebras associated with the foliation, longitudinal pseudodifferential calculus (see \cite{survey,umn-survey} for further information). 

One of advantages of using the methods of theory of operator algebras and noncommutative geometry for the study of longitudinally elliptic operators is that they allow one to consider operators, defined by the same formal differential expression in essentially different Hilbert spaces (in different representations) and compare spectral properties of such operators. For instance, the leafwise Laplacian $\Delta_H$ can be also considered as the family of $\{\Delta_L : L\in M/\mathcal F\}$ elliptic differential operators along the leaves of the foliation $\mathcal F$. Such applications rely on the fact that the Laplacian $\Delta_H$ (and, moreover, any formally self-adjoint longitudinally elliptic operator) gives rise to an unbounded regular operator $\Delta$ in some Hilbert module over the $C^*$-algebra $C^*(M,\mathcal F)$. The operator $\Delta_H$ and the family $\{\Delta_L : L\in M/\mathcal F\}$ are the images of $\Delta$ under the corresponding representations of the $C^*$-algebra $C^*(M,\mathcal F)$. In \cite{Vassout}, these results were extended to the case of elliptic differential operators on Lie groupoids. 

The notion of unbounded regular operator on a Hilbert module over a  $C^*$-algebra was introduced by Baaj in his thesis \cite{Baaj80} (see also  \cite{Baaj-Julg}), called an unbounded multiplier over a $C^*$-algebra. Regular operators have many nice properties, similar to the properties of closed densely defined operators in a Hilbert space. In particular, for self-adjoint regular operators, there is a continuous functional calculus (see, for instance, \cite{Lance}). 
\end{example}

\begin{example}  
Another important class of distributions is given by completely non-integrable distributions (or ``bracket generating'', ``satisfying H\"ormander  condition''). We recall that a distribution $H$ is called completely non-integrable, if vector fields $X\in C^\infty(M,H)$ and their iterated Lie brackets $[X_1,X_2]$, $[X_1,[X_2,X_3]]$ and so on, span the $C^\infty(M)$-module $C^\infty(M,TM)$. In this case, the set $(M, H, g, \mu)$ introduced above is called a structure of sub-Riemannian manifold. We also observe that in the case in question there is a canonical choice of a smooth positive density $\mu$ given by the Popp measure. For more information on sub-Riemannian manifolds, see, for instance, \cite{Montgomery}.

By the classical H\"ormander's theorem on sum of squares \cite{Hormander67}, a Laplacian $\Delta_H$ associated with a completely non-integrable distibution is a hypoelliptic operator. First of all, it satisfies so-called subelliptic estimates. There exists $\epsilon>0$ such that, for any $s\in \mathbb R$,
\[
\|u\|_{s+\epsilon}^2 \leq
C_s\left(\|\Delta_Hu\|_s^2+\|u\|_s^2\right), \quad u\in C^\infty(M),
\]
where $C_s>0$ is some constant and $\|\cdot\|_s$ denotes a norm in the Sobolev space $H^s(M)$. The hypoellipticity property is that, if $u\in C^{-\infty}(M)$ and $\Delta_Hu\left|_U\right.\in H^s(U)$ for some $s\in \mathbb R$ and some domain $U\subset M$, then $u \in H^{s+\varepsilon}(U)$. In particular, if $u\in C^{-\infty}(M)$ and $\Delta_Hu\left|_U\right.\in C^{\infty}(U)$, then $u \in C^{\infty}(U)$. 
\end{example} 

The study of spectral properties of the operator $\Delta_H$ associated with an arbitrary distribution $(H,g,\mu)$ was started by the author in  \cite{hypo}. The main observation of \cite{hypo} is that, under rather week assumptions on $H$, the operator $\Delta_H$ can be considered as a longitudinally hypoelliptic operator with respect to some foliation associated with $H$. But this foliation is, in general, singular. Here, following  \cite{Andr-Skandalis-I}, by a singular foliation $\mathcal F$ on a smooth manifold $M$, we mean a locally finitely generated $C^\infty_c(M)$-submodule of the $C^\infty_c(M)$-module of compactly supported smooth vector fields $C^\infty_c(M, TM)$, invariant under the Lie brackets. 
A submodule $\mathcal E \subset C^\infty_c(M, TM)$ is said to be locally finitely generated, if, for any $p\in M$, there exist an open neighborhood  $U$ of $p$ in $M$ and vector fields $X_1,\ldots, X_k\in \mathcal E\left|_U\right. \subset C^\infty_c(U,TU)$ such that, for any $f\in C^\infty_c(U)$ and $X\in \mathcal E$, the relation $fX\left|_U\right.=\sum_{j=1}^kf_jX_j$ holds with some $f_1,\ldots, f_k\in C^\infty_c(M)$. 

Any regular foliation on a smooth manifold $M$, understood as a partition of  $M$ into disjoint union of immersed submanifolds (leaves), gives rise to a singular foliation. Here, as a module $\mathcal F$, one should take the space of compactly supported smooth vector fields on $M$, tangent to  leaves of the foliation at each point of $M$. We remark that the submodule  $\mathcal F\subset C^\infty_c(M, TM)$, defining a structure of singular foliation, also gives rise to aa partition of  $M$ into disjoint union of immersed submanifolds of nonconstant dimension (\cite{Stefan, Sussmann}). But, in general, there could be several different modules $\mathcal F$, defining the same partition into leaves.

Any smooth distribution $H$ on a compact manifold $M$ determines a singular foliation $\mathcal F_H$ as follows. Recall that $C^\infty(M,H)$ denotes the submodule of $C^\infty(M,TM)$, which consists of smooth vector fields, tangent to $H$ at every point. Let $\mathcal F_H$ be the minimal submodule in $C^\infty(M,TM)$, which contains $C^\infty(M,H)$ and is invariant under the Lie brackets. Assume that $\mathcal F_H$ is finitely generated as a $C^\infty(M)$-module, that is, there exist $X_1,\ldots, X_N\in \mathcal F_H$ such that, for any $X\in \mathcal F_H$, the representation $X=f_1X_1+\ldots+f_NX_N$ holds with some $f_i\in C^\infty(M)$. Then $\mathcal F_H$ is a singular foliation in the above sense. 

In \cite{Andr-Skandalis-I,Andr-Skandalis-II}, Androulidakis and Skandalis extended to singular foliations the methods of noncommutative geometry of regular foliations, developed by Connes in \cite{Connes79}. In particular, they constructed classes of pseudodifferential operators and associated scale of Sobolev Hilbert modules $H^k$ over the algebra $C^*(M,\mathcal F)$. They also proved that a formally self-adjoint longitudinally elliptic operator defines an unbounded regular multiplier $C^*(M,\mathcal F)$ with corresponding applications in spectral theory (see also \cite{Andr14}).

In \cite{hypo}, we use the methods developed in \cite{Andr-Skandalis-I,Andr-Skandalis-II} as well as the methods of the study of hypoelliptic H\"ormander operators of sum of squares type to get some basic results on properties of the Laplacian $\Delta_H$ for an arbitrary smooth distribution.   
First of all, in \cite{hypo}, we prove subelliptic estimates for the operator $\Delta_H$. 

\begin{theorem}[\cite{hypo}]\label{t:Hs-hypo}
There exists $\epsilon>0$ such that, for any $s\in \mathbb R$, we have 
\[
\|u\|_{s+\epsilon}^2 \leq
C_s\left(\|\Delta_Hu\|_s^2+\|u\|_s^2\right), \quad u\in C^\infty(M),
\]
where $C_s>0$ is some constant and $\|\cdot\|_s$ denotes a norm in the leafwise Sobolev space $H^s(\mathcal F_H)$.
\end{theorem}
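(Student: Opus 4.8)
\emph{Proof proposal.} My plan is to transplant the classical H\"ormander sub-elliptic estimate for sums of squares into the longitudinal (leafwise) pseudodifferential calculus of \cite{Andr-Skandalis-I,Andr-Skandalis-II} attached to the singular foliation $\mathcal F_H$. The first step is algebraic. By construction $\mathcal F_H=\bigcup_k\mathcal F_H^{(k)}$, where $\mathcal F_H^{(k)}$ is the $C^\infty(M)$-submodule generated by iterated Lie brackets of length $\le k$ of vector fields in $C^\infty(M,H)$; since $\mathcal F_H$ is finitely generated, finitely many generators suffice, each lies in some $\mathcal F_H^{(k_i)}$, and hence $\mathcal F_H=\mathcal F_H^{(r)}$ for $r=\max_i k_i$. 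Restricting to a chart $\Omega$ carrying an orthonormal frame $X_1,\dots,X_d$ of $H$ and rewriting the brackets in this frame, one gets that $\mathcal F_H|_\Omega$ is generated over $C^\infty(\Omega)$ by the iterated brackets $[X_{i_1},[X_{i_2},\dots,X_{i_\ell}]]$ with $\ell\le r$, with the \emph{same} $r$ on every chart. In the language of the foliation, $X_1,\dots,X_d$ are bracket-generating of step $r$ \emph{relative to} $\mathcal F_H$, and $\Delta_H=\sum_j X_j^*X_j$ is a sum of squares of step $r$ along the leaves of $\mathcal F_H$; the natural gain is $\epsilon=1/r$ (any smaller positive number would do as well).

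I would then run a H\"ormander-type commutator induction entirely inside the leafwise calculus. Integrating by parts in $\Delta_H|_\Omega=\sum_{j=1}^d X_j^*X_j$ gives, for $v\in C^\infty_c(\Omega)$, the identity $\sum_j\|X_jv\|_0^2=(\Delta_Hv,v)_0$, where $\|\cdot\|_0$ is the $L^2$-norm. The inductive scheme of Kohn, Oleinik--Radkevich and Rothschild--Stein then upgrades control of the order-one fields $X_j$ to control of their brackets: if a field $Z$ in the span of brackets of length $\le\ell$ already satisfies $\|Zv\|^2_{-(\ell-1)/r}\le C\big(\sum_j\|X_jv\|_0^2+\|v\|_0^2\big)$, then writing $[X_i,Z]v=X_i(Zv)-Z(X_iv)$ and invoking the composition and mapping properties of the calculus between the Sobolev modules $H^s(\mathcal F_H)$ one controls $\|[X_i,Z]v\|^2_{-\ell/r}$ by the same right-hand side. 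After $r$ steps the brackets involved span the tangent spaces of the leaves of $\mathcal F_H$, and a standard argument converts this into the base estimate $\|v\|^2_{1/r}\le C\big((\Delta_Hv,v)_0+\|v\|_0^2\big)$. On the compact manifold $M$ one covers by finitely many such charts, runs the induction with a subordinate partition of unity $\{\chi_\alpha\}$ (the localization error $[\Delta_H,\chi_\alpha]=d_H^*(d_H\chi_\alpha)+\overline{(d_H\chi_\alpha)}\cdot d_H$ factors through $d_H$ and is absorbed), and obtains the global base estimate $\|u\|^2_{1/r}\le C\big((\Delta_Hu,u)_0+\|u\|^2_0\big)$. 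The general $s\in\mathbb R$ then follows by conjugating with a fixed invertible longitudinally elliptic operator $\Lambda^s$ of order $s$ on $\mathcal F_H$, using $[\Delta_H,\Lambda^s]=d_H^*[d_H,\Lambda^s]+[d_H^*,\Lambda^s]d_H$ with $[d_H,\Lambda^s]$, $[d_H^*,\Lambda^s]$ of leafwise order $\le s$, integrating by parts against $\Lambda^su$, and solving the resulting quadratic inequality for $(\Delta_H\Lambda^su,\Lambda^su)_0$.

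The step $r$ is uniform and the algebra is elementary; I expect the real obstacle to be that $\mathcal F_H$ is genuinely singular, so every one of these steps must be carried out not on a fibration but inside the calculus of \cite{Andr-Skandalis-I,Andr-Skandalis-II}, which is built on bi-submersions and on the (possibly non-Hausdorff) holonomy groupoid of $\mathcal F_H$. One must check that this calculus supplies all the tools on which the sum-of-squares argument silently relies --- a well-behaved scale of Sobolev modules $H^s(\mathcal F_H)$, composition and adjoints, the drop of order under commutators, and a sharp G\aa rding inequality --- and that the constants produced by the commutator induction are locally uniform across the leaves where the leaf dimension jumps; for the latter one lifts $X_1,\dots,X_d$ to a bi-submersion adapted to a local set of generators of $\mathcal F_H$, where the ambient manifold is smooth and the usual estimates apply. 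Once this dictionary between the classical and the singular-foliation calculi is in place, H\"ormander's machinery transfers essentially verbatim.
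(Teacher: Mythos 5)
The theorem is \emph{cited} from \cite{hypo} and not proved in the present paper, so there is no in-text proof to compare against; but the paper does describe how \cite{hypo} proceeds, and the hints it gives disagree with your route at the technical level. You propose a Kohn-style commutator induction run directly in the leafwise calculus, gaining one order at each bracket step. The paper, however, both attributes the estimate to the machinery of Rothschild--Stein (see the remark at the end of Section~3: the uniform embedding $S^2_{1,y}(U)\subset L^2_\varepsilon(U)$ is ``a particular case of the subelliptic estimates established in \cite{hypo}\ldots It can be also deduced by generalizing the proof of Theorem~13 from \cite{Rothschild-Stein}''), and devotes all of Section~3 to exactly that machinery: lift the vector fields to a free system, approximate by the free nilpotent group $G_{d,m}$, build a homogeneous parametrix there via the Folland fundamental solution, and push it down. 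That parametrix-and-lifting route is the one the paper relies on, because it produces the explicit smoothing operators needed later for the Hilbert-module regularity. Both routes are legitimate classical paths to subellipticity, but they are not the same proof, and the commutator induction does not by itself yield the operator families this paper needs downstream.

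The substantive gap is the one you flag and then defer: the whole weight of the argument rests on the claim that the Androulidakis--Skandalis calculus on the singular foliation supplies a well-behaved scale $H^s(\mathcal F_H)$ with composition, adjoints, the order drop under commutators, and a sharp G\aa rding inequality, uniformly across leaves of jumping dimension. None of that is automatic over a non-Hausdorff, variable-rank groupoid; it is precisely the non-trivial content that has to be extracted from \cite{Andr-Skandalis-I,Andr-Skandalis-II}, and without it the induction is a formal skeleton. Note also a mismatch of setting: as the Introduction stresses, \cite{hypo} works in the concrete representation on $L^2(M,\mu)$, not in the $C^*(M,\mathcal F_H)$-Hilbert module, so the norms $\|\cdot\|_s$ in the statement live in that concrete Hilbert space and your steps should be anchored there rather than in the bi-submersion module picture. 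The preliminary observations you do make --- that $\mathcal F_H=\mathcal F_H^{(r)}$ for a single $r$ by finite generation and compactness, and that the precise size of $\epsilon$ is immaterial --- are correct and harmless.
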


As a consequence, we immediately obtain the following result on longitudinal hypoellipticity. 

\begin{theorem}[\cite{hypo}]\label{t:hypo}
If $u\in H^{-\infty}(\mathcal F_H):=\bigcup_{t\in \mathbb R}H^t(\mathcal F_H)$ such that $\Delta_Hu\in H^s(\mathcal F_H)$ for some $s\in \mathbb R$, then $u \in H^{s+\varepsilon}(\mathcal F_H)$. 
\end{theorem}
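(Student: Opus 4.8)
The plan is to obtain Theorem~\ref{t:hypo} from the subelliptic estimate of Theorem~\ref{t:Hs-hypo} by a bootstrap, the only substantial point being the standard passage from an \emph{a priori} inequality valid for $u\in C^\infty(M)$ to a genuine regularity statement for leafwise distributions.

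The first step is to promote Theorem~\ref{t:Hs-hypo} to the following form: for every $\sigma\in\mathbb R$, if $u\in H^\sigma(\mathcal F_H)$ and $\Delta_H u\in H^\sigma(\mathcal F_H)$, then in fact $u\in H^{\sigma+\epsilon}(\mathcal F_H)$ and the inequality of Theorem~\ref{t:Hs-hypo} continues to hold. This is carried out inside the longitudinal pseudodifferential calculus of \cite{Andr-Skandalis-I,Andr-Skandalis-II} by a regularization argument: one approximates $u$ by $J_\delta u$, $\delta\in(0,1]$, with operators $J_\delta$ of the calculus such that $J_\delta\to I$ strongly on each module $H^\tau(\mathcal F_H)$ and with suitable uniform bounds on $J_\delta$ and on the commutators $[\Delta_H,J_\delta]$, applies Theorem~\ref{t:Hs-hypo} to $J_\delta u$, and lets $\delta\to0$. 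Equivalently, one may use a leafwise parametrix $Q$ for $\Delta_H$ supplied by the same calculus, with $Q\Delta_H=I+R$, $R$ leafwise smoothing, and $Q$ bounded from $H^\tau(\mathcal F_H)$ to $H^{\tau+\epsilon}(\mathcal F_H)$ for every $\tau$; writing $u=Q(\Delta_H u)-Ru$ then gives the regularity directly. This is essentially the mechanism that already underlies Theorem~\ref{t:Hs-hypo}.

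Granting this self-improving estimate, Theorem~\ref{t:hypo} follows by a finite induction on the Sobolev order. Let $u\in H^{-\infty}(\mathcal F_H)$ with $\Delta_H u\in H^s(\mathcal F_H)$, and choose $t_0$ with $u\in H^{t_0}(\mathcal F_H)$; we may assume $t_0<s+\epsilon$, since otherwise $u\in H^{t_0}(\mathcal F_H)\subset H^{s+\epsilon}(\mathcal F_H)$ and there is nothing to prove. Set $\sigma_0=\min\{t_0,s\}$ and, recursively, $\sigma_{k+1}=\min\{\sigma_k+\epsilon,\,s\}$. At the $k$-th stage one has $u\in H^{\sigma_k}(\mathcal F_H)$ and, since $\sigma_k\le s$, also $\Delta_H u\in H^{\sigma_k}(\mathcal F_H)$; the regularity statement just established then gives $u\in H^{\sigma_k+\epsilon}(\mathcal F_H)\subset H^{\sigma_{k+1}}(\mathcal F_H)$, which is the induction step. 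Since $\sigma_k$ increases by the fixed amount $\epsilon$ until it reaches $s$, after finitely many stages $\sigma_k=s$, and one further application yields $u\in H^{s+\epsilon}(\mathcal F_H)$.

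The main obstacle is the first step, namely realizing the regularization --- equivalently, the leafwise parametrix --- inside the Hilbert-module framework: one must know that the operators involved ($J_\delta$, or $Q$ and $R$) genuinely belong to the singular-foliation pseudodifferential calculus and act on the leafwise Sobolev modules $H^\sigma(\mathcal F_H)$ with the expected mapping and commutator properties, uniformly in the regularization parameter; one must also, as in the classical subelliptic theory, track the orders carefully, since $\epsilon<1$ prevents the commutator terms from being absorbed naively into the leading term. This is precisely what the apparatus of \cite{Andr-Skandalis-I,Andr-Skandalis-II} --- and, in the completely integrable case, the anisotropic calculus of \cite{Kord95} --- is designed to provide. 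Once the subelliptic estimate is known to self-improve in this way, the remaining argument is the elementary induction above.
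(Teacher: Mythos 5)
Your bootstrap (start at $\sigma_0$, gain $\epsilon$ per stage, stop at $s+\epsilon$) is correct and is the expected logic --- the paper itself presents Theorem~\ref{t:hypo} as an immediate consequence of Theorem~\ref{t:Hs-hypo}. The gap is the regularization step, and as sketched it does not close. With $J_\delta$ of longitudinal order zero, uniformly bounded in the calculus, and $\Delta_H$ of order two, the commutator $[\Delta_H,J_\delta]$ is a priori only of order one uniformly in $\delta$, so $[\Delta_H,J_\delta]u$ is controlled in $H^{\sigma-1}(\mathcal F_H)$ rather than in $H^\sigma(\mathcal F_H)$; since $\epsilon<1$, that loss cannot be absorbed back into the right-hand side of Theorem~\ref{t:Hs-hypo}. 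This is exactly the point at which the Kohn--H\"ormander argument is \emph{not} a generic ``mollify and pass to the limit'' argument: one must exploit the sum-of-squares structure, expressing $[\Delta_H,J_\delta]$ through order-zero commutators $[X_j,J_\delta]$ and through $X_j$-terms, and control the resulting $X_ju$ by the maximal estimate $\sum_j\|X_jv\|_\sigma^2\leq C\left(\|\Delta_Hv\|_\sigma\|v\|_\sigma+\|v\|_\sigma^2\right)$ that sits behind the subelliptic inequality. You flag the order-tracking issue, which is good, but as written the proposal names the obstacle rather than resolving it.

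The cleaner route you mention in passing --- a genuine leafwise parametrix $Q$ in the longitudinal calculus with $Q\Delta_H=I-R$, $R$ leafwise smoothing, and $Q$ bounded $H^\tau(\mathcal F_H)\to H^{\tau+\epsilon}(\mathcal F_H)$ --- avoids the commutator problem entirely: $u=Q(\Delta_Hu)-Ru$ gives $u\in H^{s+\epsilon}(\mathcal F_H)$ in one shot, with no bootstrap and no mollifiers. That is presumably what \cite{hypo} actually uses, and it means Theorem~\ref{t:hypo} is really a sibling of Theorem~\ref{t:Hs-hypo} produced by the same parametrix rather than a corollary deducible from the inequality alone. Either way, the regularization/commutator step must be carried out in detail before the argument is complete.
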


These results, in particular, allow us to give another proof of essential self-adjointness of the operator $\Delta_H$ and also prove that, for any function  $\varphi$ from the Schwartz space $\mathcal S(\mathbb R)$, the operator $\varphi(\Delta_H)$ is leafwise smoothing with respect to the foliation $\mathcal F_H$, that is, it extends to a bounded operator from $H^s(\mathcal F_H)$ to $H^t(\mathcal F_H)$ for any $s,t\in \mathbb R$.

We note that, unlike the paper \cite{Andr-Skandalis-II} that deals with Hilbert modules over the $C^*$-algebra $C^*(M,\mathcal F_H)$, we work in \cite{hypo} with the concrete representation of the $C^*$-algebra $C^*(M,\mathcal F_H)$ on $L^2(M,\mu)$. This enables us to use some results of theory of linear operators in a Hilbert space.   

The goal of this paper is to construct and study unbounded self-adjoint operators on Hilbert modules over the $C^*$-algebra $C^*(M,\mathcal F_H)$ determined by the Laplacian $\Delta_H$. The main result of the paper is stated in the following theorem (more precise statements are given in Theorems~\ref{t:multi1} and \ref{t:multi2} below).

\begin{theorem}\label{t:multi}
Suppose that the foliation $\mathcal F_H$ associated with a smooth distribution $H$ is a regular foliation.

(1) There exists a second order differential operator $\Delta$ on the holonomy groupoid $G$ of the foliation $\mathcal F_H$, whose image under the natural representation of $C^*(M,\mathcal F_H)$ in the Hilbert space $L^2(M,\mu)$ coincides with $\Delta_H$. 

(2) Consider the $C^*$-algebra $C^*(M,\mathcal F_H)$ as a Hilbert module over itself and $\Delta$ as an unbounded linear operator in  $C^*(M,\mathcal F_H)$ with domain $C^\infty_c(G)$. Then the closure $\overline \Delta$ of $\Delta_H$ is an unbounded regular self-adjoint operator.
\end{theorem}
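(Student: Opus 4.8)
The plan is the following. For part~(1), I would first observe that $\Delta_H$ is a \emph{longitudinal} operator with respect to the foliation $\mathcal F_H$: in a local orthonormal frame $X_1,\dots,X_d$ of $H$ we have $\Delta_H=\sum_{j=1}^d X_j^*X_j$ with $X_j^*=-X_j+a_j$, $a_j\in C^\infty(M)$, and since $X_j\in C^\infty(M,H)\subset C^\infty(M,T\mathcal F_H)$, the operator $\Delta_H$ differentiates only along the leaves of $\mathcal F_H$. Because $\mathcal F_H$ is regular, for each $x\in M$ a structural map of the holonomy groupoid $G$ restricts to the holonomy covering of the leaf $L_x$ through $x$, so $\Delta_H|_{L_x}$ pulls back to a second order operator on that cover; letting these fit together over $M$ defines a second order differential operator $\Delta$ on the manifold $G$ with domain $C^\infty_c(G)$ which, in a foliated chart for $G$, is simply $\Delta_H$ written in the leaf variables (acting trivially in the transverse and the ``second leaf'' variables) and which commutes with the convolution module action of $C^\infty_c(G)$. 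Finally, expressing both sides in such charts one verifies the intertwining relation $\Delta_H\circ\pi_0(\xi)=\pi_0(\Delta\xi)$, $\xi\in C^\infty_c(G)$, where $\pi_0$ is the representation of $C^*(M,\mathcal F_H)$ on $L^2(M,\mu)$ used in \cite{hypo}: up to the holonomy identifications and the density $\mu$, the map $\pi_0$ is integration over a leaf fibre of $G$, which commutes with applying $\Delta_H$ in the remaining leaf variable. Hence $\pi_0(\Delta)=\Delta_H$, proving~(1).

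For part~(2), the operator $\Delta$ is symmetric on $C^\infty_c(G)$, as follows from the formal self-adjointness of $\Delta_H=d_H^*d_H$, which is compatible with the lift to $G$ and with the $*$-operation of $C^\infty_c(G)$. By the standard criterion for unbounded operators on Hilbert $C^*$-modules (see \cite{Lance}; cf.\ \cite{Baaj80,Baaj-Julg}), a densely defined symmetric operator for which the ranges of $\Delta+i$ and $\Delta-i$ are both dense has regular self-adjoint closure; so it is enough to show that $(\Delta+i)C^\infty_c(G)$ and $(\Delta-i)C^\infty_c(G)$ are dense in $C^*(M,\mathcal F_H)$. Since $\Delta$ commutes with the module action and $C^\infty_c(G)$ is dense in $C^*(M,\mathcal F_H)$, this amounts to producing, for each sign, a parametrix: an adjointable operator $Q_\pm$ on $C^*(M,\mathcal F_H)$ mapping into $\mathrm{Dom}\,\overline\Delta$ with $(\overline\Delta\pm i)Q_\pm=1$.

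The natural candidate for $Q_\pm$ is the lift to $G$ of the leafwise resolvent $(\overline{\Delta_H}\pm i)^{-1}$, which is a bounded operator on $L^2(M,\mu)$ by Theorem~\ref{t:ss}. Here the results of \cite{hypo} enter decisively: the subelliptic estimate of Theorem~\ref{t:Hs-hypo}, which shows that $(\overline{\Delta_H}\pm i)^{-1}$ gains $\epsilon$ leafwise Sobolev derivatives, and the fact that $\varphi(\Delta_H)$ is leafwise smoothing for every $\varphi\in\mathcal S(\mathbb R)$. Concretely, choosing $\psi_n\in\mathcal S(\mathbb R)$ with $\psi_n(\lambda)\to(\lambda\pm i)^{-1}$ on $[0,\infty)\supset\mathrm{spec}\,\overline{\Delta_H}$ --- say $\psi_n(\lambda)=(\lambda\pm i)^{-1}e^{-\lambda^2/n}$ --- one gets that $\psi_n(\Delta_H)$ is leafwise smoothing, hence equals $\pi_0(k_n)$ for some $k_n$ in the smoothing ideal of $C^*(M,\mathcal F_H)$, while $(\Delta\pm i)k_n$ has $\pi_0$-image $(\overline{\Delta_H}\pm i)\psi_n(\Delta_H)=e^{-\Delta_H^2/n}\to 1$; the subelliptic estimates, lifted to the holonomy coverings, then serve to show that the $k_n$ converge in the module topology and that $\{(\Delta\pm i)k_n\}$ is a two-sided approximate identity for $C^*(M,\mathcal F_H)$, yielding density of the ranges.

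The step I expect to be the main obstacle is exactly this last identification --- upgrading from the concrete representation $\pi_0$ on $L^2(M,\mu)$, in which \cite{hypo} works, to genuine statements in the Hilbert module $C^*(M,\mathcal F_H)$: that the leafwise smoothing operators really assemble into elements of $C^\infty_c(G)$ (or of a dense subalgebra of smoothing operators) lying in the domain of $\Delta$, that the convergences take place in the module topology and not merely strongly on $L^2(M,\mu)$, and that the limiting parametrix exactly inverts $\overline\Delta\pm i$. The essential novelty relative to \cite{Andr-Skandalis-II} is that $\Delta_H$ is only longitudinally hypoelliptic, not longitudinally elliptic, so one cannot invoke their elliptic parametrix and must combine it with the anisotropic (hypoelliptic) estimates of \cite{hypo}. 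An alternative approach is to argue fibrewise: establish essential self-adjointness of the lift of $\Delta_H$ on each holonomy covering by the finite-propagation-speed (Chernoff) argument behind Theorem~\ref{t:ss}, and then assemble the resolvents into a multiplier of $C^*(M,\mathcal F_H)$ by a continuity-of-fields argument; in that case the obstacle becomes the required uniformity in $x\in M$.
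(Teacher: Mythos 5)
For part~(1) your approach is essentially the paper's: $\Delta_H$ is a longitudinal operator, and one lifts it to a second-order differential $G$-operator using the fact that $s: G^x \to L_x$ is the holonomy covering. You do gloss over a point the paper makes explicit: the representation $R_\mu$ on $L^2(M,\mu)$ involves the modular function $\delta$ of the quasi-invariant density $\mu$ and the leafwise density $\alpha$, so intertwining a vector field $X$ through $R_\mu$ produces $L_X = r^*X + r^*l_X$ with a zeroth-order correction $l_X\in C^\infty(M)$ coming from $\mu/\alpha$. The paper then assembles $\Delta$ via a partition of unity (formula \eqref{e:Delta}); the operator obtained is \emph{not} the naive leafwise lift of $\Delta_H$. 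This omission is harmless for the existence claim in~(1) but should be flagged.

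For part~(2) you take a genuinely different route and it has a gap at its center. You invoke the resolvent criterion (density of $(\Delta\pm i)C^\infty_c(G)$ in the module) and propose to manufacture the approximate inverse by transferring the $L^2(M,\mu)$ functional calculus $\psi_n(\Delta_H)$ into elements $k_n$ of $C^*(M,\mathcal F_H)$. The claim ``$\psi_n(\Delta_H)$ is leafwise smoothing, hence equals $\pi_0(k_n)$ for some $k_n$ in the smoothing ideal of $C^*(M,\mathcal F_H)$'' is precisely where the argument fails: the smoothing statement from \cite{hypo} is about boundedness between the foliated Sobolev spaces $H^s(\mathcal F_H)\to H^t(\mathcal F_H)$, i.e.\ it lives entirely in the single representation on $L^2(M,\mu)$ and does \emph{not} by itself say that the Schwartz kernel is a function on $G$, let alone a compactly supported continuous one --- which is what membership in $C^*(M,\mathcal F_H)$ requires. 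Establishing that kernel regularity is the hard analytic content, and since $\Delta_H$ is only hypoelliptic (not elliptic) along the leaves one cannot borrow the elliptic pseudodifferential machinery of \cite{Andr-Skandalis-II} but must build a parametrix by hand. That is what the paper does: it follows the Vassout/Androulidakis--Skandalis strategy of constructing $Q,R,S\in C^*(M,\mathcal F_H)$ with $I - Q\Delta = R$, $I-\Delta Q = S$ and $\Delta R,\Delta S^*$ compact (from which $\Delta^*=\overline\Delta$ and the graph decomposition follow by module algebra), and the key technical ingredient is a version of the Rothschild--Stein lifting and approximation theorems for \emph{smooth families} of H\"ormander sum-of-squares operators parametrized by the transverse variable, yielding a continuous-in-$y$ family of type-$2$ parametrix kernels that glue to a $G$-operator via Connes' construction. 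Your fallback suggestion (fibrewise essential self-adjointness plus a continuity-of-fields argument) has the same circularity: the required uniformity in $x$ is exactly what the families parametrix provides, so you would end up redoing the Rothschild--Stein analysis in disguise.
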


Consider a natural representation of $C^*(M,\mathcal F_H)$ in the Hilbert space $L^2(\tilde L)$ of square integrable functions on the holonomy covering $\tilde L$ of an arbitrary leaf $L\in M/\mathcal F_H$. Under this representation the operator $\Delta$ maps in some elliptic differential operator $\Delta_{\tilde L}$, which, in general, does not coincide with the restriction of $\Delta_H$ to $\tilde L$. As an straightforward consequence of Theorem \ref{t:multi} we obtain in a standard way (cf., for instance, \cite{Kord95}) the following statement.  

\begin{theorem}\label{t:spectrum}
(1) The spectrum $\sigma(\Delta_H)$ of $\Delta_H$ in $L^{2}(M,\mu)$ contains its leafwise spectrum
\[
\sigma_{\mathcal  F}(\Delta_H)=\overline{\bigcup \{\sigma(\Delta_{\tilde L}): L \in M/\mathcal F\}},
\]
where $\sigma(\Delta_{\tilde L})$ is the spectrum of $\Delta_{\tilde L}$ in $L^{2}(\tilde L)$.

(2) It the holonomy groupoid is amenable (that is, $C^*(M,\mathcal F)\cong C^*_r(M,\mathcal F)$), then $\sigma(\Delta_H)$ coincides with $\sigma_{\mathcal  F}(\Delta_H)$.
\end{theorem}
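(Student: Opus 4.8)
The plan is to obtain Theorem~\ref{t:spectrum} from Theorem~\ref{t:multi} via the standard correspondence between self-adjoint unbounded multipliers of a $C^*$-algebra and their images under representations. Write $A=C^*(M,\mathcal F_H)$. By Theorem~\ref{t:multi}(2), $\overline\Delta$ is a self-adjoint regular operator affiliated with $A$; moreover, by the subelliptic estimates of Theorem~\ref{t:Hs-hypo} applied on the holonomy groupoid $G$ exactly as in \cite{hypo}, the operator $\varphi(\overline\Delta)$ is leafwise smoothing and hence lies in $A$ for every $\varphi\in C_0(\mathbb R)$, so for any $*$-representation $\pi$ of $A$ the operator $\pi(\overline\Delta)$ is self-adjoint and $\pi(\varphi(\overline\Delta))=\varphi(\pi(\overline\Delta))$. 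I would use this for the natural representation $\pi_\mu$ of $A$ on $L^2(M,\mu)$, under which $\overline\Delta$ maps to the self-adjoint operator $\overline{\Delta_H}$ of Theorem~\ref{t:ss}, and, for each leaf $L$, for the regular representation $\pi_{\tilde L}$ of $A$ on $L^2(\tilde L)$, under which $\overline\Delta$ maps to $\Delta_{\tilde L}$.

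For part (1), fix a leaf $L$ and $\lambda\in\sigma(\Delta_{\tilde L})$, and suppose $\lambda\notin\sigma(\Delta_H)$. Choose $\varphi\in C_c(\mathbb R)$ with $\varphi(\lambda)=1$ and $\operatorname{supp}\varphi\cap\sigma(\Delta_H)=\emptyset$; then $\varphi(\overline\Delta)\in\ker\pi_\mu$ since $\pi_\mu(\varphi(\overline\Delta))=\varphi(\overline{\Delta_H})=0$. The point where regularity of $\mathcal F_H$ enters — and this is the standard ingredient, taken over from \cite{Kord95} — is that $\pi_{\tilde L}$ is weakly contained in $\pi_\mu$, i.e.\ $\ker\pi_\mu\subseteq\ker\pi_{\tilde L}$: one realises $\pi_\mu$ as a direct integral of the leafwise regular representations over a complete transversal carrying the full-support transverse density induced by $\mu$, and for $a\in C^\infty_c(G)$ the associated field of operators is continuous, so every fibre is weakly contained in the integral. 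Hence $\varphi(\Delta_{\tilde L})=\pi_{\tilde L}(\varphi(\overline\Delta))=0$, contradicting $\lambda\in\sigma(\Delta_{\tilde L})$ and $\varphi(\lambda)=1$ by the spectral theorem. Thus $\sigma(\Delta_{\tilde L})\subseteq\sigma(\Delta_H)$ for every $L$, and taking the closure of the union gives $\sigma_{\mathcal F}(\Delta_H)\subseteq\sigma(\Delta_H)$.

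For part (2), amenability gives $A=C^*(M,\mathcal F_H)=C^*_r(M,\mathcal F_H)$, so that $\bigoplus_L\pi_{\tilde L}$ is a faithful representation of $A$. For the affiliated operator $\overline\Delta$ this yields $\sigma_A(\overline\Delta)=\overline{\bigcup_L\sigma(\Delta_{\tilde L})}=\sigma_{\mathcal F}(\Delta_H)$: the inclusion $\sigma_{\mathcal F}(\Delta_H)\subseteq\sigma_A(\overline\Delta)$ holds because a representation cannot enlarge the spectrum of an affiliated operator, and conversely, if $\lambda\notin\overline{\bigcup_L\sigma(\Delta_{\tilde L})}$ one chooses $\varphi\in C_c(\mathbb R)$ with $\varphi\equiv1$ near $\lambda$ and vanishing on $\bigcup_L\sigma(\Delta_{\tilde L})$, so that $\pi_{\tilde L}(\varphi(\overline\Delta))=0$ for all $L$, hence $\varphi(\overline\Delta)=0$ by faithfulness, which forces $(\overline\Delta-\lambda)^{-1}\in A$, i.e.\ $\lambda\notin\sigma_A(\overline\Delta)$. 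Since $\pi_\mu$ is a representation of $A$, $\sigma(\Delta_H)=\sigma(\pi_\mu(\overline\Delta))\subseteq\sigma_A(\overline\Delta)=\sigma_{\mathcal F}(\Delta_H)$, and combining with part (1) gives equality.

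The main obstacle is the weak containment $\pi_{\tilde L}\preceq\pi_\mu$ behind part (1): one must identify $\pi_\mu$ precisely as a direct integral of the leafwise regular representations over a transversal, use that the transverse density induced by the smooth positive density $\mu$ has full support, and control the semicontinuity of the resulting field of operators on $C^\infty_c(G)$ — with the usual additional care on the meagre set of leaves with nontrivial holonomy, where $\Delta_{\tilde L}$ lives on the holonomy cover and need not coincide with the restriction of $\Delta_H$ to the leaf. An alternative, more analytic route to part (1) is to manufacture a Weyl sequence for $\Delta_H$ in $L^2(M,\mu)$ from a compactly supported Weyl sequence for $\Delta_{\tilde L}$, spread over a thin foliated tube around a compact piece of the leaf, using that $\Delta_H$ differentiates only along the leaves; this again requires treating the holonomy-free leaves first and approximating the remaining ones. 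Everything else is the formal machinery of self-adjoint regular operators and their continuous functional calculus, which applies once Theorem~\ref{t:multi} is in hand.
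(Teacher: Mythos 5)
Your argument is correct and is precisely the "standard" deduction from Theorem~\ref{t:multi} that the paper alludes to by citing \cite{Kord95} without proof: part (1) via the weak containment $\ker R_\mu\subseteq\ker R_x$ (which, as you note, comes from identifying $R_\mu$ with a direct integral of the left regular representations $R_x$ over the fully supported measure $\mu$ and using lower semicontinuity of $x\mapsto\|R_x(a)\|$ for $a\in C^\infty_c(G)$), and part (2) via faithfulness of $\bigoplus_x R_x$ on $C^*_r(M,\mathcal F)=C^*(M,\mathcal F)$. One small remark: for the functional-calculus step you only need $\varphi(\overline\Delta)\in C^*(M,\mathcal F_H)$ for $\varphi\in C_0(\mathbb R)$, which follows directly from the regularity of $\overline\Delta$ together with the fact that the parametrix of Theorem~\ref{t:param} makes the resolvents compact morphisms of $E=C^*(M,\mathcal F_H)$ (hence elements of $\mathcal K(E)=C^*(M,\mathcal F_H)$); invoking the leafwise-smoothing property from \cite{hypo} is a heavier tool than necessary and was proved there only for $\varphi\in\mathcal S(\mathbb R)$.
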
 

For the leafwise Laplacian on a smooth foliation $\mathcal F$, this result was proved in \cite{Kord95}.

The paper is organized as follows. Section \ref{s:multi} begins with summary of necessary information from foliation theory. Then we construct a second order differential operator $\Delta$ on the holonomy groupoid of  $\mathcal F_H$, proving teh first part of Theorem \ref{t:multi}. After a brief information on regular operators in Hilbert modules, we give a refined statement of the second part of Theorem~\ref{t:multi}. The proof of this theorem is based on a parametrix construction for the operator $\Delta$ and consists of several steps. First, we show how to prove the tehorem, using an appropriate parametrix for $\Delta$. Then we turn to the problem of constructing such a parametrix and reduce it to a local problem of constructing a smooth family of parametrices for a smooth family of hypoelliptic H\"ormander sum of squares type operators. The latter problem is solved in Section \ref{s:hypo-family}. Here we use a classical parametrix construction for a hypoelliptic H\"ormander operator given in the paper of Rothschild and Stein \cite{Rothschild-Stein}. Actually, we need to prove that, if the operator depends smoothly on a parameter, then its parametrix constructed in \cite{Rothschild-Stein} also depends smoothly on the parameter. This is result is a very natural generalization of the results of Rothschild and Stein, but, unfortunately, we didn't managed to find it in literature. Therefore, Section \ref{s:hypo-family} contains the proof of this result. For this, we survey main notions and constructions of \cite{Rothschild-Stein}, checking in addition smooth dependence of all constructed objects on the parameter.   
 
\section{Operators in Hilbert modules}\label{s:multi}
\subsection{Preliminaries on foliation theory} In this section we give some necessary information on foliation theory  (for more details on noncommutative geometry of foliations, see \cite{survey,umn-survey} and references therein). 

Recall that a smooth manifold $M$ is equipped with a (regular) foliation $\mathcal F$ of dimension $p$, if $M$ is represented as a disjoint union $M=\cup_{\lambda \in M/\mathcal  F}L_\lambda$ of a family of connected, (bijectively) immersed submanifolds of dimension $p$ in such a way that, for an arbitrary point $m$ of $M$, there exists a coordinate neighborhood $\Omega\cong U\times T$, where $U$ is an open subset of $\mathbb R^p$, $T$ is an open subset of $\mathbb R^q$, such that, for any leaf $L$ of $\mathcal F$, the components of $L\cap \Omega$ have the form $U\times \{y\}$ for some $y\in T$. The neighborhood $\Omega$ will be called a foliated coordinate neighborhood.

The foliation $\mathcal F$ defines a subbundle $F=T{\mathcal F}$ of the tangent bundle $TM$, called the tangent bundle of $\mathcal F$. It consists of all vectors, tangent to leaves of $\mathcal F$. The subspace $C^\infty(M,F)$ of vector fields on $M$ tangent to leaves of $\mathcal  F$ at every point of $M$ is a singular foliation in the sense of the definition given in Introduction.

Recall that a set $G$ carries a structure of groupoid with the set
of units $G^{(0)}$, if there are defined maps
\begin{enumerate}
\item $\Delta : G^{(0)}\rightarrow G$ (the diagonal map or the unit map);
\item an involution $i:G\rightarrow G$ called the inversion and
written as $i(\gamma)=\gamma^{-1}$;
\item the range map $r:G\rightarrow G^{(0)}$ and the source map $s:G\rightarrow G^{(0)}$;
\item an associative multiplication $m: (\gamma,\gamma')\rightarrow
\gamma\gamma'$ defined on the set
\[
G^{(2)}=\{(\gamma,\gamma')\in G\times G : r(\gamma')=s(\gamma)\},
\]
\end{enumerate}
satisfying the conditions
\begin{itemize}
\item[(i)] $r(\Delta(x))=s(\Delta(x))=x$ and $\gamma\Delta(s(\gamma))=\gamma$,
$\Delta(r(\gamma))\gamma=\gamma$;
\item[(ii)]~$r(\gamma^{-1})=s(\gamma)$ and $\gamma\gamma^{-1}=\Delta(r(\gamma))$.
\end{itemize}
It is convenient to represent an element $\gamma\in G$ as an arrow $\gamma :x \to y$, where $x=s(\gamma)$ and $y=r(\gamma)$. We will also use standard notation (for $x,y\in G^{(0)}$):
\begin{gather*}
G^x=\{\gamma\in G:r(\gamma)=x\} =r^{-1}(x),\quad G_x=\{\gamma\in
G:s(\gamma)=x\} =s^{-1}(x),\\
G^x_y=\{\gamma\in G : s(\gamma)=x, r(\gamma)=y\}.
\end{gather*}

The holonomy groupoid $G=G(M,{\mathcal F})$ of a foliated manifold
$(M,{\mathcal F})$ is defined in the following way. Let $\sim_h$ be
the equivalence relation on the set of piecewise-smooth leafwise paths
$\gamma:[0,1]\rightarrow M$, setting $\gamma_1\sim_h \gamma_2$, if
$\gamma_1$ and $\gamma_2$ have the same initial and final points and
the same holonomy maps: $h_{\gamma_1} = h_{\gamma_2}$. The holonomy groupoid $G$ is the set of $\sim_h$-equivalence classes of piecewise-smooth leafwise paths. The set of units $G^{(0)}$ is a manifold $M$. The multiplication in $G$ is given by the product of paths. The
corresponding range and source maps $s,r:G\rightarrow M$ are given
by $s(\gamma)=\gamma(0)$ and $r(\gamma)=\gamma(1)$. Finally, the
diagonal map $\Delta:M\rightarrow G$ takes any $x\in M$ to the
element in $G$ given by the constant path $\gamma(t)=x, t\in [0,1]$.
To simplify the notation, we will identify $x\in M$ with
$\Delta(x)\in G$.

For any $x\in M$, the map $s$ maps $G^x$ onto the leaf $L_x$ through
$x$. The group $G^x_x$ coincides with the holonomy group of $L_x$.
The map $s:G^x\rightarrow L_x$ is the regular covering with the
covering group $G^x_x$, called the holonomy covering.

The holonomy groupoid $G$ is a Lie groupoid. This means that it can be equipped with a structure of smooth manifold (in general, non-Hausdorff and non-paracompact) of dimension $2p+q$ such that the sets $G^{(0)}$, $G$ and $G^{(2)}$ are smooth manifolds, the maps $r$, $s$, $i$ and $m$ are smooth maps, the maps $r$ and $s$ are submersions and the map $\Delta$ is an embedding.

Let $\alpha \in C^\infty(M,|T{\mathcal F}|)$ be an arbitrary smooth positive leafwise density on $M$. For any $x\in M$, define a smooth positive density $\nu ^{x}$ on $G^x$ as the lift of the density $\alpha $ by the holonomy covering map $s:G^x\to M$. The family $\{\nu^x:x\in M\}$ is a smooth Haar system on $G$. 

Introduce a structure of involutive algebra on
$C^{\infty}_c(G)$ by
\begin{align*}
k_1\ast k_2(\gamma)&=\int_{G^x} k_1(\gamma_1)
k_2(\gamma^{-1}_1\gamma)\,d\nu^x(\gamma_1),\quad \gamma\in G^x,\\
k^*(\gamma)&=\overline{k(\gamma^{-1})}, \quad \gamma\in G.
\end{align*}
For any $x\in M$, there is a natural representation of
$C^{\infty}_c(G)$ in the Hilbert space $L^2(G^x,\nu^x)$ given, for
$k\in C^{\infty}_c(G)$ and $\zeta \in L^2(G^x,\nu^x)$, by
\begin{equation}\label{e:Rx}
R_x(k)\zeta(\gamma)=\int_{G^x}k(\gamma^{-1}\gamma_1)
\zeta(\gamma_1) d\nu^x(\gamma_1),\quad r(\gamma)=x.
\end{equation}
The completion of the involutive algebra $C^{\infty}_c(G)$ in the
norm
\[
\|k\|=\sup_x\|R_x(k)\|
\]
is called the reduced $C^*$-algebra of the groupoid $G$ and denoted
by $C^{\ast}_r(G)$. There is also defined the full
$C^{\ast}$-algebra of the groupoid $C^{\ast}(G)$, which is the
completion of $C^{\infty}_c(G)$ in the norm
\[
\|k\|_{\text{max}}=\sup \|\pi(k)\|,
\]
where supremum is taken over the set of all $\ast$-representations
$\pi$ of the algebra $C^{\infty}_c(G)$ in Hilbert spaces.

\subsection{Construction of the operator on a Hilbert module}
Let $H$ be a smooth distribution on a compact manifold $M$. It defines a singular foliation $\mathcal F_H$ as follows. Recall that $C^\infty(M,H)$ denotes the submodule of $C^\infty(M,TM)$, which consists of smooth vector fields, tangent to $H$ at every point of $M$. Let $\mathcal F_H$ be the minimal submodule of $C^\infty(M,TM)$, which contains $C^\infty(M,H)$ and is invariant under Lie brackets. If it is finitely generated, it defines a singular foliation.

From now on, we will assume that $\mathcal F_H$ is a regular foliation. This means that $\mathcal F_H$ coincides with the subspace of smooth vector fields on $M$, tangent to leaves of some smooth foliation, which will be also denoted by $\mathcal F_H$. It is easy to see that $H \subset T\mathcal F_H$ and the restriction $H_L$ of $H$ to any leaf $L$ of $\mathcal F_H$ is completely non-integrable. The Laplacian $\Delta_H$ associated with a Riemannian metric $g$ on $H$ and a smooth positive density $\mu$ on $M$ is a longitudinal differential operator with respect to the foliation $\mathcal F_H$. Note that the restriction of $\Delta_H$ to a leaf $L$ of $\mathcal F_H$, in general, doesn't coincide with the Laplacian associated with the distribution $H_L$ for some choice of a smooth positive density $\alpha$ on $L$.   

As above, let $\mu$ be a smooth positive density on $M$. Following \cite{renault,Fack-Skandalis}, we define a natural representation $R_\mu$ of the $C^*$-algebra $C^*(M,\mathcal F_H)$ in the Hilbert space $L^2(M,\mu)$. Let $\{\nu^x=s^*\alpha : x\in M\}$ be the smooth Haar system on the holonomy groupoid $G$ of $\mathcal F_H$, defined by a smooth positive leafwise density $\alpha\in C^\infty(M,|T{\mathcal F}|)$. There exists a smooth non-vanishing function $\delta$ on $G$ such that, for any $f\in C_c(G)$, the following equality holds:
\[
\int_M\left(\int_{G^x}\delta(\gamma) f(\gamma^{-1})d\nu^x(\gamma)\right)d\mu(x)=\int_M\left(\int_{G^x}f(\gamma)d\nu^x(\gamma)\right)d\mu(x).
\]
In terminology of \cite{renault,Fack-Skandalis}, the function $\delta$ defines a homomorphism of the groupoid $G$ in the multiplicative group $\mathbb R_+$, and the measure $\mu$ on $M$ is a quasi-invariant measure of module $\delta$. Without loss of generality, we may assume that $\delta(x)=1$ for any $x\in M\subset G$.

\begin{definition}
A regularizing operator $R_\mu(k) : L^2(M,\mu)\to L^2(M,\mu)$ associated with a leafwise kernel $k\in C^\infty_c(G)$ is defined for $u\in L^2(M,\mu)$ by
\[
R_\mu(k)u(x)=\int_{G^x} k(\gamma)\delta^{-\frac 12}(\gamma) u(s(\gamma))d\nu^x(\gamma), \quad x\in M.
\]
\end{definition}

Let $\phi: \Omega\stackrel{\cong}{\to} U\times T$ and $\phi': \Omega'\stackrel{\cong}{\to} U^\prime \times T$ be two compatible foliated charts on $M$ and $W(\phi,\phi')\subset G \stackrel{\cong}{\to} U\times U^\prime \times T$ the corresponding coordinate chart on $G$ \cite{Connes79} (see also \cite{survey,umn-survey}). The restrictions of the maps $r : W(\phi,\phi') \to \Omega$ and $s : W(\phi,\phi') \to \Omega^\prime$ of the holonomy groupoid $G$ to $W(\phi,\phi')$ are given by
\[
r(x,x^\prime,y)=(x,y),\quad s(x,x^\prime,y)=(x^\prime,y), \quad (x,x^\prime,y)\in U\times U^\prime \times T.
\]
In the charts $\phi$ and $\phi^\prime$, the density $\mu$ is written as $\mu=\mu(x,y)|dx||dy|$ and $\mu=\mu^\prime(x^\prime,y^\prime)|dx^\prime||dy^\prime|$, respectively, and the density $\alpha$ as $\alpha=\alpha(x,y)|dx|$ and $\alpha=\alpha^\prime(x^\prime,y^\prime)|dx^\prime|$, respectively. Then $\delta\in C^\infty(U\times U\times T)$ is given by (see \cite[Proposition VIII.12]{Connes79})
\[
\delta(x,x^\prime,y)=\frac{\mu(x,y)\alpha^\prime(x^\prime,y)}{\mu^\prime(x^\prime,y)\alpha(x,y)}, \quad (x,x^\prime,y)\in U\times U^\prime \times T.
\]
For any kernel $k$ with support in $W$, $k\in C^\infty_c(W)\cong C^\infty_c(U\times U^\prime \times T)$, the operator $R_\mu(k) : C^\infty(\Omega^\prime)\to C^\infty(\Omega)$ has the form
\[
R_\mu(k)f(x,y)=\int k(x,x^\prime,y)\left(\frac{\mu^\prime(x^\prime,y)}{\mu(x,y)}\right)^{1/2}(\alpha(x,y))^{1/2}(\alpha^\prime(x^\prime,y))^{1/2} f(x^\prime,y)dx^\prime.
\]

The following statement is a refined formulation of the first part of Theorem~\ref{t:multi}. 

\begin{theorem}\label{t:multi1}
There exists a second order differential operator $\Delta$ on $G$ such that its image under the representation $R_\mu$ coincides with $\Delta_H$, that is, for any $k\in C^\infty_c(G)$,  
\begin{equation}\label{e:LXDelta}
R_\mu(\Delta k)=\Delta_H R_\mu(k).
\end{equation} 
\end{theorem}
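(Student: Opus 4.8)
The plan is to obtain $\Delta$ by lifting the leafwise operator $\Delta_H$ to the holonomy groupoid $G$ along the source fibres, while carefully tracking the density factors built into the representation $R_\mu$. Since $H\subset T\mathcal F_H$, a local orthonormal frame $X_1,\dots,X_d$ of $H$ consists of leafwise vector fields on $M$, so locally $\Delta_H=\sum_{j=1}^dX_j^*X_j$ is a second order leafwise operator whose coefficients are smooth functions on $M$; in a foliated chart $\Omega\cong U\times T$ it differentiates only in the plaque variable $x$, with coefficients depending on $(x,y)$. I would first pass to a groupoid chart $W(\phi,\phi')\cong U\times U'\times T$, where $r(x,x',y)=(x,y)$ and $s(x,x',y)=(x',y)$, and rewrite the explicit formula for $R_\mu$ given above as
\[
R_\mu(k)f(x,y)=\int k(x,x',y)\,\beta(x,y)\,\bigl(\mu'(x',y)\alpha'(x',y)\bigr)^{1/2}\,f(x',y)\,dx',
\]
where $\beta(x,y):=(\alpha(x,y)/\mu(x,y))^{1/2}$ collects exactly the $r$-dependent density factors.

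Next I would \emph{define} $\Delta$ in this chart by conjugating, by the weight $\beta$, the operator $\Delta_{H,x}$ obtained from $\Delta_H$ by letting it act in the $x$-variable only:
\[
(\Delta k)(x,x',y):=\beta(x,y)^{-1}\,\Delta_{H,x}\bigl(\beta(x,y)\,k(x,x',y)\bigr).
\]
Because $\Delta_{H,x}$ differentiates only in $x$ while all the $x'$-dependent factors in the formula for $R_\mu(k)$ are constant in $x$, differentiating under the integral sign gives $\Delta_HR_\mu(k)f=R_\mu(\Delta k)f$ at once for kernels $k$ supported in $W$; the general intertwining relation \eqref{e:LXDelta} then follows by a partition of unity on $G$. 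There remains the point that the locally defined operators patch to a globally well-defined second order differential operator on $G$: two groupoid charts overlap by a foliated change of coordinates in which $x$ and the transverse variable $y$ change separately, and $\beta^2=\alpha/\mu$ transforms by the transverse Jacobian only, so passing between charts changes $\beta^{-1}\Delta_{H,x}\beta$ only by conjugation by a function of $y$ alone — which commutes with $\Delta_{H,x}$. Hence $\Delta$ is chart-independent; equivalently, it is the pull-back of the leafwise operator $\Delta_H$ along the local diffeomorphisms $r\colon G_x\to L_x$, twisted by the modular function (note $\delta^{1/2}(\gamma)=\beta(s(\gamma))/\beta(r(\gamma))$ in suitable charts), and in this form it is visibly a second order differential operator along the $s$-fibres of $G$, hence a second order differential operator on $G$; it is of order two because $\Delta_H$ is.

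The main obstacle is essentially bookkeeping: correctly disentangling the four density factors $\mu,\mu',\alpha,\alpha'$ occurring in $R_\mu$ — equivalently, placing the modular function $\delta$ correctly — and then checking chart-independence; the non-Hausdorff, non-paracompact nature of $G$ forces the global statements to be assembled from local ones by partitions of unity, which is routine in foliation $C^*$-algebra theory. A more conceptual route, which I would also indicate, is to lift each $X_j$ to the unique right-invariant vector field $\tilde X_j$ on $G$ with $r_*\tilde X_j=X_j$ (these being precisely the vector fields tangent to the $s$-fibres) and to set $\Delta=\sum_j\tilde X_j^{\dagger}\tilde X_j$, with adjoints taken for the pairing that $R_\mu$ intertwines with the inner product of $L^2(M,\mu)$; then \eqref{e:LXDelta} reduces to the classical compatibility of a single right-invariant vector field with the regular representation, as in Connes' work, the only extra ingredient being the modular correction forced by the choice of $\mu$.
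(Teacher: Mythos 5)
Your proposal is correct and takes essentially the same approach as the paper: both lift $\Delta_H$ to a longitudinal differential operator along the $s$-fibres of $G$ by pulling back through $r$ and absorbing the modular density corrections. Your local formula $(\Delta k)=\beta^{-1}\,\Delta_{H,x}(\beta k)$ with $\beta=(\alpha/\mu)^{1/2}$ packages in one step what the paper builds term by term from lifted first-order operators $L_X=r^*X+r^*l_X$, $\tilde L_X=-r^*X+r^*\tilde l_X$ and a partition of unity, and your second, ``conceptual'' route is precisely the paper's construction.
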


\begin{proof}
We show that, for any vector field $X\in C^\infty(M,T\mathcal F_H)$ considered as a first order differential operator in $C^\infty(M)$, there exists a first order differential operator $L_X: C^\infty_c(G)\to C^\infty_c(G)$ such that
\begin{equation}\label{e:LXk}
XR_\mu(k)=R_\mu(L_Xk), \quad k\in C^\infty_c(G).
\end{equation} 
It is easy to see that there exists a unique vector field $s^*X$ (resp. $r^*X$) on $G$ such that $ds_\gamma(s^* X(\gamma)) =X(s(\gamma))$ and $dr_\gamma(s^* X(\gamma))=0$ (resp. $ds_\gamma(r^* X(\gamma)) =0$ and $dr_\gamma(s^* X(\gamma))=X(r(\gamma))$) for any $\gamma\in G$. If we write $X$ in the foliated coordinate neighborhood $\Omega\cong U\times T$ on $M$ as $X_{(x,y)}=\sum_{j=1}^p X^j(x,y)\frac{\partial}{\partial x_j}$, then in the coordinate neighborhood $W\subset G \cong U\times U^\prime \times T$ determined by a pair of compatible foliated charts $\Omega\cong U\times T$ and $ \Omega'\cong U^\prime \times T$ we have
\[
s^*X_{(x,x^\prime,y)}=\sum_{j=1}^p X^j(x^\prime,y)\frac{\partial}{\partial x^\prime_j}, \quad r^*X_{(x,x^\prime,y)}=\sum_{j=1}^p X^j(x,y)\frac{\partial}{\partial x_j}.
\] 
In local coordinates we get
\begin{align*}
XR_\mu(k)f(x,y)=& \sum_{j=1}^p \int X^j(x,y)\frac{\partial}{\partial x_j} \left[K(x,x^\prime,y)\left(\frac{\mu^\prime(x^\prime,y)}{\mu(x,y)}\right)^{1/2}(\alpha(x,y))^{1/2}(\alpha^\prime(x^\prime,y))^{1/2}\right]
f(x^\prime,y)dx^\prime\\
=& R_\mu(L_Xk)f(x,y),
\end{align*}
where
\[
L_XK(x,x^\prime,y)=\sum_{j=1}^p X^j(x,y)\left[\frac{\partial}{\partial x_j} K(x,x^\prime,y)-\frac{1}{2}\frac{\partial (\mu/\alpha)}{\partial x_j}(x,y)K(x,x^\prime,y)\right].
\]
From here, one can easily see that the formula \eqref{e:LXk} holds with some operator of the form
\[
L_X=r^*X+r^*l_X, \quad l_X\in C^\infty(M).
\]
Since $X^*=-X+c_X$ for some $c_X\in C^\infty(M)$, we also get
\[
X^*R_\mu(k)=-XR_\mu(k)+c_XR_\mu(k)=R_\mu((-L_X+r^*c_X)k).
\]
Thus, we obtain that
\begin{equation}\label{e:LXk1}
X^*R_\mu(k)=R_\mu(\tilde L_Xk),
\end{equation}
where
\[
\tilde L_X=-r^*X+r^*\tilde l_X,  \quad \tilde l_X\in C^\infty(M).
\] 

Let $M=\bigcup_{\alpha=1}^M \Omega_\alpha$ be a finite open cover of  $M$ such that, for any $\alpha=1,\ldots,M$, there exists a local orthonormal basis $X^{(\alpha)}_1,\ldots, X^{(\alpha)}_d\in C^\infty(\Omega_\alpha, H\left|_{\Omega_\alpha}\right.)$. As mentioned above, the restriction of $\Delta_H$ to $\Omega_\alpha$ is written as
\[
\Delta_H\left|_{\Omega_\alpha}\right.=\sum_{j=1}^d(X^{(\alpha)}_j)^* X^{(\alpha)}_j.
\]
Let $\phi_\alpha\in C^\infty(M)$ be a partition of unity, subordinate to the cover, ${\rm supp}\,\phi_\alpha\subset U_\alpha$, and
$\psi_\alpha\in C^\infty(M)$ be such that ${\rm
supp}\,\psi_\alpha\subset U_\alpha$, $\phi_\alpha\psi_\alpha=\phi_\alpha$. Then we have
\[
\Delta_H=\sum_{\alpha=1}^M \phi_\alpha (\Delta_H\left|_{\Omega_\alpha}\right.)\psi_\alpha=\sum_{\alpha=1}^M\sum_{j=1}^d \phi_\alpha (X^{(\alpha)}_j)^* X^{(\alpha)}_j\psi_\alpha.
\]
Set
\begin{equation}\label{e:Delta}
\Delta=\sum_{\alpha=1}^M\sum_{j=1}^d  r^*\phi_\alpha\tilde L_{X^{(\alpha)}_j} L_{X^{(\alpha)}_j}r^*\psi_\alpha.
\end{equation}
Using \eqref{e:LXk} and \eqref{e:LXk1}, it is easy to establish the relation \eqref{e:LXDelta}.  
\end{proof}

\subsection{Regular operators in Hilbert modules and parametrix}\label{s:regular}
Recall that a Hilbert module over a $C^*$-algebra $A$ is a right $A$-module $E$ endowed with a positive definite sesquilinear map $\langle \cdot, \cdot\rangle : E\times E\to A$ such that the $\|x\|_E=\|\langle x,x\rangle \|_A^{1/2}$ equips $E$ with a structure of Banach space. 

Let us consider a Hilbert module $E$ over the $C^*$-algebra $C^*(M,\mathcal F_H)$ defined as follows. $E=C^*(M,\mathcal F_H)$, the structure of right module is given by the right multiplication by elements of the algebra and the inner product has the form $\langle a,b\rangle=a^*b$.   
We will consider the operator $\Delta$ as an unbounded, densely defined operator on the Hilbert module $E$ with domain $\mathcal A=C^\infty_c(G)$. This operator is formally self-adjoint in the sense that, for any $k_1,k_2\in C^\infty_c(G)$, the following equality holds: 
\begin{equation}\label{e:ss}
\langle \Delta k_1,k_2\rangle=\langle k_1,\Delta k_2\rangle.
\end{equation}
Indeed, by \eqref{e:LXDelta}, we have
$R_\mu(\Delta k)=\Delta_H R_\mu(k)$. Taking adjoints and using the fact that  $R_\mu$ is a $\ast$-representation, we obtain that $R_\mu((\Delta k)^*)= R_\mu(k^*)\Delta_H$. Let us check that  
\[
R_\mu((\Delta k_1)^*\ast k_2)=R_\mu(k^*_1\ast \Delta k_2).
\]  
Indeed, using the fact that $R_\mu$ is a $\ast$-representation, we have:
\[
R_\mu((\Delta k_1)^*\ast k_2)=R_\mu((\Delta k_1)^*)R_\mu(k_2)=R_\mu(k_1^*)\Delta_H R_\mu(k_2),
\]
\[
R_\mu(k^*_1\ast \Delta k_2)=R_\mu(k^*_1) R_\mu(\Delta k_2)=R_\mu(k_1^*)\Delta_H R_\mu(k_2).
\]   
Since $R_\mu$ is injective on $C^\infty_c(G)$, this implies the equality
\[
(\Delta k_1)^*\ast k_2=k^*_1\ast \Delta k_2,
\] 
equivalent to \eqref{e:ss}.

Since the domain of $\Delta$ is dense, its formal self-adjointness immediately implies the existence of the closure $\overline{\Delta}$. 
 
Recall that an unbounded operator $T$ on a Hilbert module $E$ is called regular, if it is densely defined, its adjoint is densely defined, and its graph admits an orthogonal complement, which means that $A\oplus A = G \oplus G^\bot$, where $G = \{(x, Tx) : x \in \operatorname{Dom} T\}$ is the graph of $T$ and $G^\bot = \{(T^*y,-y) :  y \in \operatorname{Dom} T^*\}$ is its orthogonal complement with respect to an obvious $A$-valued inner product on $A\oplus A$.

The next statement is a refined formulation of the second part of Theorem~\ref{t:multi}. 

\begin{theorem}\label{t:multi2} 
The operator $\overline{\Delta}$ gives rise to an unbounded regular self-adjoint operator on the Hilbert module $E$.
\end{theorem}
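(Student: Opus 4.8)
The plan is to establish regularity and self-adjointness of $\overline{\Delta}$ by the standard criterion: it suffices to show that $\Delta \pm i$ (equivalently, since $\Delta$ is non-negative, it is convenient to work with $\Delta + 1$ or $(\Delta+1)\pm i\varepsilon$) have dense range in $E$, and more precisely that one can invert $\Delta + i$ modulo an element of the algebra $A = C^*(M,\mathcal F_H)$ with good mapping properties. Concretely, following the Baaj--Julg picture and the Androulidakis--Skandalis approach for longitudinally elliptic operators, I would reduce everything to the construction of a \emph{parametrix}: an operator $Q$ on $C^\infty_c(G)$, bounded on the module $E$ (i.e. adjointable with $C^*(M,\mathcal F_H)$-coefficients), such that $(\Delta+1)Q = 1 - S_1$ and $Q(\Delta+1) = 1 - S_2$ with $S_1, S_2$ \emph{leafwise smoothing}, hence compact operators on $E$ whose ranges lie in $C^\infty_c(G)$ (or in the Sobolev Hilbert modules $H^k$ of \cite{Andr-Skandalis-II}). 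Given such $Q$, a now-routine functional-analytic argument (e.g. as in \cite{Baaj80,Lance,Kord95}) shows that $\overline{\Delta}+1$ is surjective with bounded inverse on $E$, which together with formal self-adjointness \eqref{e:ss} and density of the domain forces $\overline{\Delta}$ to be regular and self-adjoint; the non-negativity of $\Delta$ also gives that the spectrum is contained in $[0,\infty)$.

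The first concrete step is therefore to show how the theorem follows from the existence of an appropriate parametrix $Q$ for $\Delta$ on the groupoid $G$: one checks that $Q$ and the remainders $S_i$ are adjointable on $E$, that $S_i$ map $E$ into the domain of every power of $\overline{\Delta}$, and then runs the standard argument to conclude that $\operatorname{Ran}(\overline{\Delta}\pm i)$ is all of $E$ and that the resolvents are adjointable. The second, and genuinely substantive, step is the \emph{construction} of $Q$. Because $\Delta$ is built in \eqref{e:Delta} from the lifts $L_{X^{(\alpha)}_j}$, $\tilde L_{X^{(\alpha)}_j}$ of a local orthonormal frame of $H$ via $r^*$, in a groupoid chart $W(\phi,\phi')\cong U\times U'\times T$ the operator $\Delta$ acts only in the $U$-variables (the range-leaf directions), smoothly parametrized by $(x',y)\in U'\times T$, and leafwise it is — up to lower order terms and the density twist computed in the proof of Theorem~\ref{t:multi1} — exactly a H\"ormander sum-of-squares operator $\sum_j (X^{(\alpha)}_j)^* X^{(\alpha)}_j$ associated with the completely non-integrable distribution $H_L$. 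So the construction of $Q$ localizes, via the partition of unity $\phi_\alpha, \psi_\alpha$ and a patching argument, to the problem of constructing a parametrix for a \emph{smooth family} (in the transverse parameter $(x',y)$) of hypoelliptic sum-of-squares operators; this is precisely the task handed to Section~\ref{s:hypo-family}, and I would invoke the smoothly-parametrized version of the Rothschild--Stein parametrix construction \cite{Rothschild-Stein} proved there. One must also verify that the resulting local operators glue to a globally defined leafwise-smoothing operator on $G$ — this uses the subelliptic estimates of Theorem~\ref{t:Hs-hypo}/\ref{t:hypo} (which control leafwise Sobolev regularity uniformly, hence give the needed boundedness of $Q$ between Sobolev Hilbert modules) and the fact that the remainders, having Schwartz-class behaviour in the elliptic variable, extend to leafwise smoothing operators, i.e. bounded $H^s \to H^t$ for all $s,t$, hence compact on $E$.

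I expect the main obstacle to be controlling the \emph{transverse} (parameter) behaviour uniformly: the Rothschild--Stein parametrix is inherently a leafwise object, constructed leaf by leaf via a nilpotent-group approximation and dilation structure that depends on the point, and I must show that when the sum-of-squares operator depends smoothly on the extra variables $(x',y)$ the whole construction — the choice of a free Lie-algebra-generating frame, the osculating nilpotent group, the fundamental solution on that group, and the iterative correction terms — can be carried out smoothly and with estimates uniform on compact sets in the parameter. This is exactly the technical content deferred to Section~\ref{s:hypo-family}, and it is the heart of the paper; granting it, the remaining work here is the (standard but slightly delicate) verification that the local parametrices, after multiplication by $r^*\phi_\alpha$ and $r^*\psi_\alpha$ and summation, define operators on $C^\infty_c(G)$ that are adjointable on the Hilbert module $E$ with the claimed remainders, and the invocation of the abstract regularity criterion. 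A secondary, more bookkeeping-level difficulty is keeping track of the density factors $\mu/\alpha$ and the modular function $\delta$ introduced in the proof of Theorem~\ref{t:multi1}, so that formal adjoints on $G$ match the $*$-operation of $C^\infty_c(G)$ and the parametrix identities are genuinely symmetric; this is why it is convenient to have normalized $\delta|_M = 1$ and to work throughout with the lifts $L_X, \tilde L_X$ rather than with $X$ directly.
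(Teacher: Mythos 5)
Your overall strategy --- reduce to a parametrix for $\Delta$ as a $G$-operator, built by patching a smoothly parametrized Rothschild--Stein construction over foliated charts, and then invoke an abstract regularity criterion for Hilbert modules --- is the same as the paper's, and you correctly identified the families version of the Rothschild--Stein parametrix (with uniform transverse control of the lifting and the nilpotent approximation) as the substantive technical burden deferred to Section~\ref{s:hypo-family}.

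Where you diverge, and where there is a real gap, is in the abstract step from parametrix to regularity. You invoke the Baaj--Julg/Lance picture via the assertion that ``$\overline{\Delta}+1$ is surjective with bounded inverse on $E$'' follows from $(\Delta+1)Q = 1 - S_1$, $Q(\Delta+1)=1-S_2$ with $S_i$ compact, and call this ``routine.'' It is not: a parametrix with compact, even leafwise-smoothing, remainder gives no smallness of $S_1$ and there is no Fredholm alternative in a general Hilbert module, so surjectivity of $\overline{\Delta}+1$ simply does not follow. What the paper actually proves (following Vassout and Androulidakis--Skandalis) is a \emph{domain} identification: Lemma~\ref{l:DR} shows $\overline{\Delta}\,\overline{Q}=\overline{\Delta Q}$, $\overline{\Delta}\,\overline{R}=\overline{\Delta R}$ and $\operatorname{Dom}\overline{\Delta}=\operatorname{Im}\overline{Q}+\operatorname{Im}\overline{R}$; from this one first derives $\Delta^*=\overline{\Delta}$, and then --- instead of passing through $\pm i$ or any resolvent --- writes the graph $G(\overline{\Delta})$ as the image of the bounded morphism $T$ of $E\oplus E$ built from $\overline{Q},\overline{R},\overline{\Delta Q},\overline{\Delta R}$; since that image is closed, the standard closed-range fact \cite[Theorem 2.3.3]{Manuilov-Troitski} furnishes the orthogonal complement, which is exactly regularity. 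Your Lance-type criterion (dense range of $\overline{\Delta}\pm i$) can in fact be salvaged from the same Lemma~\ref{l:DR}: once $\operatorname{Dom}\Delta^*=\operatorname{Dom}\overline{\Delta}$ is established, any $v\perp\operatorname{Ran}(\overline{\Delta}\pm i)$ satisfies $\overline{\Delta}v=\mp iv$, and pairing with $v$ and using symmetry forces $\langle v,v\rangle=0$. But this requires the domain lemma first; there is no shortcut from ``parametrix with compact remainder'' to ``surjectivity.'' Two smaller mismatches: the paper's parametrix is for $\Delta$ itself rather than $\Delta+1$, and the remainders in Theorem~\ref{t:param} are required to satisfy the stronger condition that $\Delta R$ and $\Delta S^*$ extend to compact morphisms --- it is precisely this extra regularity (not mere compactness of $R,S$) that makes Lemma~\ref{l:DR}(a) go through.
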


The proof of this theorem is based on a construction of a parametrix for the operator $\Delta$. 

\begin{theorem}\label{t:param}
There are elements $Q$, $R$ and $S$ of $C^*(M,\mathcal F_H)$, considered as morphisms of the Hilbert module $E$, such that the following identities hold (on $\mathcal A$):  
\begin{equation}\label{e:param}
I-Q\Delta = R, \quad I-\Delta Q = S.
\end{equation}
Moreover, the operators $\Delta R$ and $\Delta S^*$ extend to compact morphisms of the Hilbert module $E$ and, therefore, belong to the algebra  $C^*(M,\mathcal F_H)$.
\end{theorem}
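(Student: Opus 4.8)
The plan is to realize $Q$ as a longitudinally pseudodifferential operator of order $-2$ on the holonomy groupoid $G$, obtained by gluing together local parametrices. Observe first that the operator $\Delta$ of \eqref{e:Delta} is right-invariant on $G$: it is built from the vector fields $r^*X$ and the multiplication operators $r^*\phi$, and $r^*\phi\cdot(k_1\ast k_2)=(r^*\phi\cdot k_1)\ast k_2$, so one checks $\Delta(k_1\ast k_2)=(\Delta k_1)\ast k_2$. Hence $\Delta$ acts on $C^\infty_c(G)$ by left convolution $\Delta k=P\ast k$ with a compactly supported distribution $P$ on $G$, conormal to $G^{(0)}$, of order $2$. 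Producing $Q$, $R$, $S$ as in the theorem is then equivalent to producing a properly supported distribution $q$ on $G$, conormal to $G^{(0)}$ and of order $-2$, together with elements $r,s\in C^\infty_c(G)$, such that $q\ast P=\delta_{G^{(0)}}-r$ and $P\ast q=\delta_{G^{(0)}}-s$; one then lets $Q$, $R$, $S$ be the corresponding left convolution operators. Once $r,s\in C^\infty_c(G)$, the kernels of $\Delta R$ and $\Delta S^*$ are obtained from those of $r$, $s^*$ by applying a differential operator, so they are again smooth and compactly supported; thus $\Delta R,\ \Delta S^*\in C^\infty_c(G)\subset C^*(M,\mathcal F_H)$, and every element of $C^*(M,\mathcal F_H)$ is automatically a compact morphism of the Hilbert module $E=C^*(M,\mathcal F_H)$ since $\mathcal K(E)\cong C^*(M,\mathcal F_H)$. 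Likewise, a properly supported conormal distribution of order $-2$ on $G$ over the compact base $M$ defines a compact morphism of $E$, so $Q\in C^*(M,\mathcal F_H)$. The whole statement therefore reduces to the construction of such a parametrix $q$ with smoothing remainders.

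To construct $q$ I would localize. Fix the finite cover $M=\bigcup_\alpha\Omega_\alpha$ by foliated charts $\Omega_\alpha\cong U_\alpha\times T_\alpha$ used in the proof of Theorem~\ref{t:multi1}, and the associated charts $W_\alpha\cong U_\alpha\times U_\alpha\times T_\alpha$ of $G$. In $W_\alpha$ the operator $\Delta$ is a differential operator acting in the first $\mathbb R^p$-variable which, because $H_L$ is completely non-integrable on each leaf $L\subset\Omega_\alpha$, has precisely the form of a H\"ormander sum-of-squares-type operator $-\sum_j (X^{(\alpha)}_j)^2+(\text{first order})$ in that variable, depending smoothly on the remaining coordinates as parameters. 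Applying the smooth-family version of the Rothschild--Stein parametrix construction --- which is exactly the content of Section~\ref{s:hypo-family} --- yields, for each $\alpha$, an operator $Q^{(\alpha)}$ of order $-2$ in the Rothschild--Stein calculus, with Schwartz kernel supported near the diagonal, such that $Q^{(\alpha)}\Delta^{(\alpha)}=I-R^{(\alpha)}$ and $\Delta^{(\alpha)}Q^{(\alpha)}=I-S^{(\alpha)}$ with smoothing remainders, everything depending smoothly on the parameters. Transported to $G$, each $Q^{(\alpha)}$ becomes a properly supported conormal distribution $q^{(\alpha)}$ supported in $W_\alpha$, conormal to $G^{(0)}\cap W_\alpha$.

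Next I would patch. Using the functions $\phi_\alpha,\psi_\alpha$ from the proof of Theorem~\ref{t:multi1}, form a suitable combination $q_0$ of the $q^{(\alpha)}$ with these cutoffs. Applying the composition calculus for conormal distributions on $G$, together with the facts that commutators of $q^{(\alpha)}$ with multiplication operators, as well as the discrepancies between the local operators $\Delta^{(\alpha)}$ on overlaps, are of strictly lower order, one obtains $q_0\ast P=\delta_{G^{(0)}}-r_0$ and $P\ast q_0=\delta_{G^{(0)}}-s_0$ with $r_0,s_0$ properly supported of order $-1$, which is not yet smoothing. To upgrade the remainders I would run the usual asymptotic Neumann-series argument inside the calculus: the convolution powers $r_0^{\ast k}$ have orders tending to $-\infty$, so one may form an asymptotic sum $\rho\sim\sum_{k\ge0}r_0^{\ast k}$ and set $q=\rho\ast q_0$, which gives $q\ast P=\delta_{G^{(0)}}-r$ with $r$ smoothing; the analogous construction on the other side produces a right parametrix, which agrees with $q$ modulo smoothing by the standard argument, so that $P\ast q=\delta_{G^{(0)}}-s$ with $s$ smoothing as well. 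Compactness of $M$ and the rapid leafwise decay of the kernels are then used to replace ``properly supported smoothing'' by genuine membership in $C^\infty_c(G)$ --- cutting the kernels off far from $G^{(0)}$ contributes only further smoothing, rapidly decaying, errors that are absorbed into $r$, $s$ --- and similarly to see that $q$ itself lies in $C^*(M,\mathcal F_H)$.

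The step I expect to be the main obstacle --- apart from the smooth dependence of the Rothschild--Stein parametrix on the parameter, which is deliberately isolated in Section~\ref{s:hypo-family} --- is this globalization: one must ensure that the Rothschild--Stein calculus, which is inherently local and tied to a choice of frame $X^{(\alpha)}_1,\dots,X^{(\alpha)}_d$, patches coherently across the charts of the possibly non-Hausdorff and non-paracompact groupoid $G$, so that the asymptotic summation and the passage from properly supported operators to honest elements of $C^*(M,\mathcal F_H)$ are legitimate. It is precisely here that the compactness of $M$ and the leafwise decay estimates for the local parametrices are essential.
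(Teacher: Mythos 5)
Your approach diverges from the paper's in an interesting but incomplete way. You aim to build a conormal/Rothschild--Stein pseudodifferential calculus on the holonomy groupoid $G$, with a composition law and asymptotic summation, and then run a Neumann-series argument to produce remainders $r,s\in C^\infty_c(G)$. This would give a \emph{stronger} conclusion than the theorem asserts, but the tools you invoke are not available here, and setting them up is precisely the difficulty. Three points, in decreasing order of seriousness.

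First, there is no off-the-shelf Rothschild--Stein calculus on $G$ closed under composition and asymptotic (Borel) summation. The RS parametrix construction is tied to a choice of lifted frame, a choice of group $G_{d,m}$ and exponential coordinates, and an anisotropic notion of order based on the homogeneous dimension $Q$; none of this is invariant under change of chart or frame. Your patching step --- ``applying the composition calculus for conormal distributions on $G$ \ldots one obtains $q_0\ast P=\delta-r_0$ with $r_0$ of order $-1$'' --- and the subsequent asymptotic summation $\rho\sim\sum r_0^{\ast k}$ both presuppose a stable operator class in which commutators with cutoffs, compositions, and Borel sums behave as in the classical elliptic calculus. That is a substantial project in its own right, and nothing in \cite{Rothschild-Stein} or the present paper establishes it. You flag globalization as the main obstacle, which is correct, but it is not merely a technicality to be ``ensured''; it is the missing theorem.

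Second, even locally the RS construction does not directly give smoothing remainders. Theorem~\ref{t:tilde-param} and its pushforward (Theorem~\ref{t:paramLy}) produce remainders that are operators of type~$1$, i.e.\ smoothing of \emph{order one}, and Proposition~\ref{p:Pl} iterates this to order $l$ for any finite $l$. To get genuinely $C^\infty$ remainders you would still need the asymptotic-summation step, which loops back to the missing calculus.

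The paper's own proof deliberately sidesteps all of this and proves exactly what the statement needs, no more. It establishes the local Theorem~\ref{t:param-local}: in a suitable foliated chart there is a family $\{Q_y\}$ of compact operators in $L^2(U)$, \emph{continuous in the uniform operator topology}, with remainders $R_y,S_y$ and the products $\Delta_yR_y,\Delta_yS_y^*$ also compact and norm-continuous in $y$. This comes from Proposition~\ref{p:Pl} with $l$ large enough: the RS finite-order parametrix maps into $S^2_{l,y}(U)$, the uniformly bounded embeddings $S^2_{km,y}\subset L^2_k(U)$ of \eqref{e:embed1} land in a classical Sobolev space, and the compact inclusion $L^2_1(U)\hookrightarrow L^2(U)$ delivers compactness and continuity in $y$. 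Connes' criterion \cite[Prop.~VIII.7b)]{Connes79} then says that a compactly supported, norm-continuous family of compact operators in a foliated chart defines an element of $C^*(M,\mathcal F_H)$. Finally the parametrix is glued by a plain partition of unity at the $G$-operator level, $Q=\sum_i Q'_{l,i}(\psi_i\circ s)$, with no composition calculus required. This is weaker than your $C^\infty_c(G)$-remainder claim but is sufficient, and it is supported entirely by Section~\ref{s:hypo-family}, whose whole purpose is the smooth/continuous parameter dependence of the local RS construction. If you want to pursue your route, you would first have to develop a bona fide anisotropic pseudodifferential calculus on $G$ with stable order filtration, composition, and asymptotic completeness, and then prove (à la Vassout) that negative-order properly supported elements lie in $C^*(M,\mathcal F_H)$; neither of these is done, and the paper's strategy was chosen precisely to avoid them.
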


The proof of this theorem will be given in Section \ref{s:local}. In this section, we demonstrate how Theorem \ref{t:param} implies Theorem~\ref{t:multi2}. Here, actually, we will repeat with more details a similar proof, given in \cite{Vassout} (see also \cite{Andr-Skandalis-II}). 

\begin{lemma}\label{l:DR}
The following relations hold:

(a)
$\overline{\Delta}\, \overline{Q}= \overline{\Delta Q}$, $\overline{\Delta}\, \overline{R}= \overline{\Delta R}$.

(b) $ \operatorname{Dom} \overline{\Delta} = \operatorname{Im}  \overline{Q}+ \operatorname{Im} \overline{R}$.
\end{lemma}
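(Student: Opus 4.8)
The plan is to deduce both parts from the parametrix identities \eqref{e:param} together with general facts about closures of unbounded operators on Hilbert modules. For part (a), the key point is that $Q$, $R$ and $S$ are bounded morphisms of $E$ (elements of $C^*(M,\mathcal F_H)$), so they map $\operatorname{Dom}\overline{\Delta}$ into itself whenever the relevant algebraic identities say so; the content is to promote the identity $\Delta Q = \Delta Q$ on $\mathcal A$ to an identity of closed operators. First I would observe that on $\mathcal A$ we have $\Delta Q k = (\Delta Q) k$ with $\Delta Q = I - S \in C^*(M,\mathcal F_H)$ a bounded operator by Theorem \ref{t:param}; hence $\Delta Q$, a priori unbounded with core $\mathcal A$, is actually bounded, so $\overline{\Delta Q} = I - \overline{S} = I - S$. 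On the other hand $\overline{\Delta}\,\overline{Q}$ makes sense because $Q = \overline{Q}$ is bounded: one checks $Q(\mathcal A) \subset \operatorname{Dom}\overline{\Delta}$ (indeed $Q$ maps into $\operatorname{Dom}\Delta$? — not necessarily, so here one uses that $\Delta Q$ is bounded on $\mathcal A$ to conclude $Qk \in \operatorname{Dom}\overline{\Delta}$ for $k\in\mathcal A$ by a standard approximation argument) and that the two operators agree on the core $\mathcal A$, then pass to closures. The same argument with $R$ in place of $Q$, using that $\Delta R$ is bounded (even compact) by Theorem \ref{t:param}, gives $\overline{\Delta}\,\overline{R} = \overline{\Delta R} = \Delta R$.

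For part (b), I would argue two inclusions. The inclusion $\operatorname{Im}\overline{Q} + \operatorname{Im}\overline{R} \subset \operatorname{Dom}\overline{\Delta}$ is immediate from part (a), since it shows $\overline{Q}$ and $\overline{R}$ both map $E$ into $\operatorname{Dom}\overline{\Delta}$. For the reverse inclusion, take $\xi \in \operatorname{Dom}\overline{\Delta}$, so there is a sequence (or net) $k_n \in \mathcal A$ with $k_n \to \xi$ and $\Delta k_n \to \overline{\Delta}\xi$ in $E$. Apply the first parametrix identity in \eqref{e:param}: $k_n = Q\Delta k_n + R k_n$. Since $Q$ and $R$ are bounded, $Q\Delta k_n \to Q\,\overline{\Delta}\xi = \overline{Q}\,\overline{\Delta}\xi \in \operatorname{Im}\overline{Q}$ and $R k_n \to R\xi = \overline{R}\xi \in \operatorname{Im}\overline{R}$; hence $\xi = \overline{Q}\,\overline{\Delta}\xi + \overline{R}\xi \in \operatorname{Im}\overline{Q} + \operatorname{Im}\overline{R}$. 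This establishes the equality.

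The main obstacle I anticipate is the first step of part (a): justifying that $\overline{Q}$ (and $\overline{R}$) actually land in $\operatorname{Dom}\overline{\Delta}$ and that composition of closures behaves as claimed. Unlike the Hilbert space setting, one must be careful that $\overline{\Delta}\,\overline{Q}$ is a genuine regular (or at least closable densely defined) operator and that the naive identity $\overline{\Delta}\,\overline{Q} = \overline{\Delta Q}$ is not spoiled by domain pathologies; the resolution is exactly that $\Delta Q$ and $\Delta R$ are bounded on the core $\mathcal A$, which forces $Qk, Rk \in \operatorname{Dom}\overline{\Delta}$ for every $k \in \mathcal A$, and then density of $\mathcal A$ plus boundedness of $Q, R$ and of $\Delta Q, \Delta R$ lets one extend from the core. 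This is the point where Theorem \ref{t:param}'s assertion that $\Delta R$ and $\Delta S^*$ are compact (in particular bounded) morphisms is used in an essential way, and it is the technical heart of the lemma; the rest is a routine manipulation of the parametrix identities.
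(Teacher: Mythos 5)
Your proof is correct and follows essentially the same approach as the paper: part (b) is the paper's argument verbatim, and part (a) reaches the same conclusion with the same two ingredients (boundedness of $\Delta Q$, $\Delta R$ from Theorem~\ref{t:param} and closedness of $\overline{\Delta}$), phrased as a direct approximation from $\mathcal{A}$ where the paper instead first observes that $\overline{\Delta}\,\overline{R}$ is a closed operator and then exploits the operator inclusion $\overline{\Delta R}\subset\overline{\Delta}\,\overline{R}$. One small clarification: your parenthetical worry that $Q$ might not map $\mathcal{A}$ into $\operatorname{Dom}\Delta$ is unnecessary, since the statement of Theorem~\ref{t:param}, asserting $I-\Delta Q=S$ as an identity of operators on $\mathcal{A}$, already presupposes $Q(\mathcal{A})\subset\operatorname{Dom}\Delta$, so no auxiliary approximation argument is needed at that step.
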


\begin{proof}
(a)  Let $u_n\in \operatorname{Dom} (\overline{\Delta}\, \overline{R})$ be a sequence such that $u_n\to u$ and the sequence $\overline{\Delta}\, \overline{R} u_n$ is convergent. Then, for the sequence $v_n=\overline{R} u_n$, we obtain that $v_n\to \overline{R} u$ and the sequence $\overline{\Delta}v_n$ is convergent. Therefore, $\overline{R} u\in \operatorname{Dom} (\overline{\Delta})$ and $u\in \operatorname{Dom} (\overline{\Delta}\, \overline{R})$. Thus, the operator $\overline{\Delta}\, \overline{R}$ is closed. Since $\Delta R\subset \overline{\Delta}\, \overline{R}$, this immediately implies that $\overline{\Delta R}\subset \overline{\Delta}\, \overline{R}$. By Theorem~\ref{t:param}, the operator $\Delta R$ extends to a morphism $\overline{\Delta R}$ of the Hilbert module $E$. Hence, it is everywhere defined and $\overline{\Delta R}=\overline{\Delta}\, \overline{R}$. 

(b) First of all, we observe that (a) immediately implies that $\operatorname{Im}  \overline{Q}+ \operatorname{Im} \overline{R}\subset \operatorname{Dom} \overline{\Delta}$. 

Let us prove the reverse inclusion. Let $u\in \operatorname{Dom} \overline{\Delta}$. Thus, there exists a sequence $u_n\in C^\infty_c(G)$ such that $u_n$ converges in the norm of $C^*(M,\mathcal F_H)$ to $u$ and $\Delta u_n$ converges in the norm to $\overline{\Delta}u$. By \eqref{e:param}, we have
\[
u_n=Q\Delta u_n + R u_n.
\]
Passing to the limit in this equality, we obtain that
\[
u=\overline{Q}\, \overline{\Delta} u + \overline{R} u,
\]
and, therefore, $u\in \operatorname{Im} \overline{Q}+ \operatorname{Im} \overline{R}$. 
\end{proof}

\begin{proof}[Proof of Theorem~\ref{t:multi2}]
First, we prove that $\Delta^*=\overline{\Delta}$. Observe that, since $\Delta$ is formally self-adjoint, $\overline{\Delta}\subset \overline{\Delta}^*=\Delta^*$. It remains to prove that $\operatorname{Dom} \Delta^*\subset \operatorname{Dom} \overline{\Delta}$. Since $\Delta Q = I + S$, we have $(\Delta Q)^* = I + S^*$. Note that the following relation holds: $Q^*\Delta^* \subset (\Delta Q)^*$, which means that, if $u\in \operatorname{Dom} Q^*\Delta^*=\operatorname{Dom} \Delta^*$, then $Q^*\Delta^*u= (\Delta Q)^*u$. Indeed, since $\Delta Q$ is a bounded morphism and $u\in \operatorname{Dom}\Delta^*$, for any $v\in C^*(M,\mathcal F)$, we have
\[
\langle (\Delta Q)^*u,v\rangle = \langle u, \Delta Q v\rangle = \langle \Delta^* u, Q v\rangle = \langle Q^*\Delta^* u, v\rangle.  
\]

Therefore, for any $u\in \operatorname{Dom} \Delta^*$, we have the relation $u =Q^*\Delta^*u-S^*u$, which implies that $\operatorname{Dom} \Delta^*\subset \operatorname{Im} Q^*+\operatorname{Im} S^*$. 

Passing to adjoints in the second relation in \eqref{e:param}, we get the following identity (as operators on $\mathcal A$): 
\[
I-Q^*\Delta = S^*. 
\]
One can apply to it the assertion of Lemma \ref{l:DR} (a), which allows us to claim that $\operatorname{Im}  Q^*+ \operatorname{Im} S^*\subset \operatorname{Dom} \overline{\Delta}$ and, therefore, $\operatorname{Dom} \overline\Delta = \operatorname{Im} Q^*+\operatorname{Im} S^*$.

It remains to prove that the graph $G(\overline{\Delta})=\{(u,\overline\Delta u), u\in \operatorname{Dom} \overline\Delta\}$ of the morphism $\overline \Delta$ admits an orthogonal complement in $C^*(M,\mathcal F)\oplus C^*(M,\mathcal F)$. Using Lemma \ref{l:DR}, one can check that  $G(\overline{\Delta})$ is described as follows:
\[
G(\overline{\Delta})=\{(\overline{Q}x+\overline{R}y, \overline{\Delta Q}x+\overline{\Delta R}y), (x,y)\in E\times E\}. 
\]

Consider the operator $T$ in $E\oplus E$ given by the matrix
\[
T=
\begin{bmatrix}
\overline{Q} & \overline{R}\\
\overline{\Delta Q} & \overline{\Delta R}
\end{bmatrix}.
\]
By Lemma \ref{l:DR} (a), it is a bounded operator. Moreover, since the elements of the matrix, defining $T$, belong to the algebra $C^*(M,\mathcal F_H)$, the operator $T$ is a morphism of the Hilbert module $E\oplus E$. 

Hence, the assertion of the theorem follows from the following well-known fact (see, for instance, \cite[Theorem 2.3.3]{Manuilov-Troitski}). If $T$ is a morphism of a Hilbert module $E$ and $\operatorname{Im} T$ is closed, then $\operatorname{Im} T$ admits an orthogonal complement. Indeed, in this case, the following relation holds: $\operatorname{Im} T\oplus \operatorname{Ker} T^* = E$. 
\end{proof}

\subsection{Localization of the problem}\label{s:local}
In this section, we show how to reduce the proof of Theorem~\ref{t:param} to some local statement for families of differential operators.

First of all, we recall the notion of $G$-operator, introduced in \cite{Connes79}. For any $\gamma\in G$, $\gamma : x\to y$, define the left translation operator $L(\gamma) : C^\infty(G^x)\to C^\infty(G^x)$ by
\[
L(\gamma)f(\gamma^\prime)=f(\gamma^{-1}\gamma^\prime), \quad \gamma^\prime \in G^y.
\]
Note that the operator $L(\gamma)$ extends to an isometric operator from the Hilbert space $L^2(G^x,\nu^x)$ to $L^2(G^y,\nu^y)$. A $G$-operator is any family $\{P_x, x\in M\}$, where $P_x$ is a linear continuous map in $C^\infty(G^x)$, which is left-invariant: $L(\gamma)\circ P_x=P_y\circ L(\gamma)$ for any $\gamma : x\to y$. An example of a $G$-operator is given by an operator of the form $P_x=R_x(k), x\in M$, where $k\in C^\infty_c(G)$ and $R_x$ is the representation of the algebra $C^{\infty}_c(G)$ in $L^2(G^x,\nu^x)$ given by \eqref{e:Rx}.

Let us compute the image of $\Delta$ under the representation $R_x$, that is, a differential $G$-operator $\{\Delta_x: x\in M\}$ such that, for any $k\in C^\infty_c(G)$, the following identity holds: 
\[
R_x(\Delta k)=\Delta_x R_x(k).
\] 
For any $k\in C^\infty(G)$, define a function $\tilde k \in C^\infty(G)$ by $\tilde k(\gamma)=k(\gamma^{-1}), \gamma\in G$. It is easy to check that, for any vector field $X\in C^\infty(M,T\mathcal F)$ and any function $a\in C^\infty(M)$, we have the identities:
\[
\widetilde{(r^*X)k}=(s^*X)\tilde k, \quad \widetilde{(r^*a)k}=(s^*a)\tilde k, \quad k\in C^\infty(G).
\]
Using these identities, from \eqref{e:Delta} and \eqref{e:Rx}, we get 
\[
\Delta_x=\sum_{\alpha=1}^d\sum_{j=1}^p  s^*\phi_\alpha\tilde R_{X^{(\alpha)}_j} R_{X^{(\alpha)}_j}s^*\psi_\alpha,
\]
where
\[
R_X=s^*X+s^*l_X, \quad \tilde R_X=-s^*X+s^*\tilde l_X.
\] 
It is this operator that was denoted by $\Delta_{\tilde L}$ in Introduction in Theorem \ref{t:spectrum} (with $\tilde L=G^x$). 

Consider the longitudinally elliptic operator $\Delta_M$ on $M$ given by
\[
\Delta_M=\sum_{\alpha=1}^M\sum_{j=1}^d  \phi_\alpha (-{X^{(\alpha)}_j}+\tilde l_{X^{(\alpha)}_j}) (X^{(\alpha)}_j+l_{X^{(\alpha)}_j}) \psi_\alpha.
\]
If we restrict this operator to the leaf $L_x$ through $x\in M$ and then lift it to the holonomy covering $G^x$ by use of the map $s:G^x\to L_x$, then we get the operator $\Delta_x$. Remark that the operator $\Delta_M$, in general, doesn't coincides with $\Delta_H$. 

Let $\Omega\cong U\times T$, $U\subset \mathbb R^p$,  $T\subset \mathbb R^q$, be an arbitrary foliated coordinate neighborhood, where there is defined a local orthonormal frame $X_j, j=1,\ldots,d$ in $H$. Since the vector fields $X_j$ are tangent to the foliation $\mathcal F_H$, they are tangent to plagues $U\times \{y\}$ in $\Omega$, and, therefore, $X_j(x,y)\in \mathbb R^p\cong \mathbb R^p\oplus \{0\}\subset \mathbb R^p\oplus \mathbb R^q$ for any $(x,y)\in U\times T$. In particular, any $X_j$ is given by a family $\{X_{j,y}, y\in T\}$ of vector fields on $U$. For any function $a\in C^\infty(U\times T)$, we will denote by $a_y\in C^\infty(U\times \{y\})\cong C^\infty(U)$ its restriction to $U\times \{y\}, y\in T$.

It is easy to check that the restriction of the operator $\Delta_M$ to $\Omega$ is given by a smooth family $\{\Delta_y, y\in T\}$ of second order differential operator on $U$ of the form
\begin{equation}\label{e:Delta-local}
\Delta_y=-\sum_{j=1}^d X^2_{j,y}+\sum_{j=1}^da_{j,y}X_{j,y}+b_y, \quad y\in T, 
\end{equation}
where $a_j,b\in C^\infty(U\times T)$.

A crucial role in the proof of Theorem~\ref{t:param} is played by the following fact. 

\begin{theorem}\label{t:param-local}
For any $m\in M$, there exists a foliated coordinate neighborhood $\Omega\cong U\times T$, $U\subset \mathbb R^p$,  $T\subset \mathbb R^q$, such that, for any $\phi\in C^\infty_c(\Omega)$, there exists a family $\{Q_y, y\in T\}$ of compact operators $L^2(U)$, continuous in the uniform operator topology, with the Schwartz kernel compactly supported in $U\times U\times T$, such that
\[
Q_y\Delta_y=\phi_y I-R_y,\quad \Delta_y Q_y=\phi_y I-S_y,\quad y\in T,
\]
where the operators $R_y$, $S_y$, $\Delta_y R_y$ and $\Delta_y S^*_y$ on $C^\infty(U)$ extend to compact operators in $L^2(U)$, depending continuously on $y\in T$ in the uniform operator topology.
\end{theorem}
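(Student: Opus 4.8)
The plan is to obtain the family $\{Q_y\}$ by cutting off a smoothly varying family of Rothschild--Stein parametrices of the operators $\Delta_y$. First I would shrink $\Omega$ around $m$ so that on $U\times T$ the vector fields $X_{1,y},\dots,X_{d,y}$ together with their iterated Lie brackets span $\mathbb R^p$ at every point, with a step $r$ independent of $y$; this is possible by compactness after passing to a smaller neighbourhood, and it makes $\{\Delta_y\}$ a smooth family of hypoelliptic H\"ormander operators of sum-of-squares type satisfying a uniform subelliptic estimate. The decisive ingredient is then the parametrised Rothschild--Stein construction of Section~\ref{s:hypo-family}: for each $y$ there is a properly supported parametrix $\tilde Q_y$, of order $-2$ in the anisotropic pseudodifferential calculus and with kernel supported in an arbitrarily small neighbourhood of the diagonal, such that $\Delta_y\tilde Q_y=I-\tilde S_y$ and $\tilde Q_y\Delta_y=I-\tilde R_y$ with $\tilde S_y,\tilde R_y$ smoothing, and --- this is precisely what Section~\ref{s:hypo-family} establishes --- all kernels involved depend smoothly on $y\in T$.

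Next I would fix cutoff functions $\phi\prec\psi$ in $C^\infty_c(\Omega)$, with $\psi$ identically $1$ on the support of $\phi$ and $\operatorname{supp}\psi$ inside a fixed compact subset of $\Omega$, take the diagonal support radius of $\tilde Q_y$ small relative to the gap between these supports, and build $Q_y$ from $M_{\phi_y}$, $M_{\psi_y}$ and $\tilde Q_y$, together with the finite two-sided correction of the parametrix. This is essentially the bookkeeping carried out for elliptic operators on Lie groupoids in \cite{Vassout} and \cite{Andr-Skandalis-II}, and it goes through with $\tilde Q_y$ in place of the elliptic parametrix: the commutators of $\Delta_y$ with the cutoffs are harmless because $\tilde Q_y$ is supported close to the diagonal, and one arranges that $R_y=\phi_y I-Q_y\Delta_y$ and $S_y=\phi_y I-\Delta_y Q_y$, as well as $\Delta_y R_y$ and $\Delta_y S_y^{\ast}$, are of negative order in the anisotropic calculus, with Schwartz kernels confined to a fixed compact subset of $U\times U$.

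To conclude, I would note that an operator on $C^\infty(U)$ of negative anisotropic order whose Schwartz kernel lies in a fixed compact subset of $U\times U$ maps $L^2(U)$, by the uniform subelliptic estimate, boundedly into $H^{\delta}(U)$ for some $\delta>0$ with range in a fixed compact set, hence extends by Rellich's lemma to a compact operator on $L^2(U)$; applied to $R_y$, $S_y$, $\Delta_y R_y$ and $\Delta_y S_y^{\ast}$ this gives the compactness. Since every kernel occurring in the construction depends smoothly on $y\in T$ and stays inside one fixed compact subset of $U\times U$, the corresponding operator families are norm-continuous (in fact smooth) in $y$, which with the pointwise compactness yields the continuity statement.

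The hard part is not this bookkeeping but the ingredient it rests on: that the whole Rothschild--Stein apparatus --- the lifting of the $X_{j,y}$ to free vector fields, the osculating nilpotent group, the fundamental solution of the model operator, and every error estimate --- can be run with smooth dependence on the parameter $y$ and a uniform step $r$. This is a natural but, to the author's knowledge, unrecorded generalisation of \cite{Rothschild-Stein}, and establishing it is the content of Section~\ref{s:hypo-family}; once available, the cutoff construction and the estimates above are routine adaptations of the elliptic case.
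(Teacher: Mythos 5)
Your proposal takes essentially the same route as the paper: a parametrised Rothschild--Stein parametrix obtained by lifting to free vector fields, pushed down and localised by cutoffs, iterated (Proposition~\ref{p:Pl}) to raise the order of the error terms, with compactness coming from the uniform-in-$y$ anisotropic Sobolev embedding $S^2_{km,y}(U)\subset L^2_k(U)$ and Rellich's lemma. The one step the paper treats with slightly more care than your sketch is the norm-continuity of the operator families in $y$: since the Schwartz kernels are singular on the diagonal, smooth kernel dependence alone does not immediately yield norm-continuity, and the paper instead combines uniform boundedness and strong continuity of the families as maps $L^2(U)\to L^2_1(U)$ with post-composition by the compact embedding $L^2_1(U)\hookrightarrow L^2(U)$ to promote strong to uniform continuity.
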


The proof of Theorem~\ref{t:param-local} will be given in Section
 \ref{s:hypo-family}. Here we complete the proof of Theorem~\ref{t:param}, using Theorem~\ref{t:param-local}. 

The following construction given in \cite{Connes79} (see, in particular,  \cite[Proposition VIII.7b)]{Connes79}) allows one to construct $G$-operators, starting from a continuous family of integral operators, defined in a foliated coordinate neighborhood. Let $\Omega\cong U\times T$ be a foliated coordinate neighborhood and $\{P_y : y\in T\}$ be a continuous family of operators in $C^\infty(U)\cong C^\infty(U\times \{y\})$. Assume that the Schwarz kernel $k_y\in C^{-\infty}(U\times U)$ is compactly supported in $U\times U\times T$. (In this case, we will say that the family  $\{P_y\}$ is compactly supported in $U\times T$.) A natural embedding of $W=U\times U\times T$ into $G$ allows one to consider $k$ as a distribution on $G$ and, therefore, define the corresponding $G$-operator $\{P^\prime_x : x\in M\}$. This operator can be also described as follows. For an arbitrary $x\in M$, we define an equivalence relation on $G^x\cap s^{-1}(\Omega)$, setting $\gamma_1\sim \gamma_2$, if $\gamma_1^{-1}\gamma_2\in W$. Each equivalence class $\ell$ is open (since $W$ is open) and connected (since $U$ is connected). Therefore, $\ell$ is a connected component and $G^x\cap s^{-1}(\Omega)=\cup \ell $ is the representation of $G^x\cap s^{-1}(\Omega)$ as the union of connected components. The restriction of $s$ to $\ell$ is a homeomorphism of $\ell$ on some plague $s(\ell)=U\times\{y\}$ of $\Omega$. The kernel  $K_x(\gamma,\gamma_1)$ of the operator $P^\prime_x : C^\infty(G^x)\to C^\infty(G^x)$ may be different from zero only if $\gamma$ and $\gamma_1$ are in the same component of $G^x\cap s^{-1}(\Omega)$, and the restriction of the operator to the component $\ell$ corresponds under the map $s$ to an operator $P_y$, acting on $s(\ell)=U\times\{y\}$. 

By \cite[Proposition VIII.7b)]{Connes79}, if for any $y\in T$ the operator $P_y$ is compact in $L^2(U)$ and the map $y\to P_y$ is continuous in the uniform operator norm in $L^2(U)$, then the corresponding $G$-operator $P^\prime$ belongs to $C^*(M,\mathcal F_H)$. 

The proof of Theorem~\ref{t:param} is completed by means of the standard gluing construction of local parametrices (cf. \cite[Proposition IX.2)]{Connes79}).

\begin{proof}[Proof of Theorem~\ref{t:param}]
Let $M=\cup_i\Omega_i$ be a finite covering of $M$ by foliated coordinate neighborhoods, for each of which the statement of Theorem~\ref{t:param-local} holds, $\Omega_i\cong U_i\times T_i$, $\phi_i$ be a partition of unity, subordinate to this covering, $\psi_i\in C^\infty_c(\Omega_i)$ be functions such that $\psi_i=1$ on the support of $\phi_i$. Let $\{\Delta_{i,y} : y\in T_i\}$ be a smooth family of differential operators on  $U_i$ defined by \eqref{e:Delta-local} in a foliated coordinate neighborhood $\Omega_i\cong U_i\times T_i$. Observe that the $G$-operator $(\psi_i\Delta_i)^\prime$ obtained by use of the above construction from the family $\{\psi_{i,y}\Delta_{i,y}: y\in T_i\}$, coincides with the differential $G$-operator $\{s^*\psi\Delta_x: x\in M\}$. By Theorem~\ref{t:param-local}, for any $i$, there is a continuous family $\{Q_{i,y} : y\in T_i\}$ of compact operators in $L^2(U_i)$ with compact support with $U_i\times T_i$ such that 
\[
Q_{i,y}\Delta_{i,y}=\phi_{i,y} I-R_{i,y},\quad \Delta_{i,y} Q_{i,y}=\phi_{i,y} I-S_{i,y},\quad y\in T_i,
\]
where the operators $R_{i,y}$, $S_{i,y}$, $\Delta_{i,y} R_{i,y}$ and $\Delta_{i,y} S^*_{i,y}$ on $C^\infty(U_i)$ extend to compact operators in $L^2(U_i)$, continuously depending on $y\in T$ in the uniform operator topology. It is easy to check that the $G$-operator $Q=\sum_i Q^\prime_{l,i}(\psi_i\circ s)$ is a desired one. 
\end{proof}

\section{Families of hypoelliptic operators and their parametrices}\label{s:hypo-family}
This section is devoted to the proof of Theorem~\ref{t:param-local}. The main part of the proof of Theorem~\ref{t:param-local} is the construction of a smooth family of parametrices for a smooth family of hypoelliptic H\"ormander sum of squares type operators. We will follow the classical parametrix construction given in the paper of Rothschild and Stein \cite{Rothschild-Stein}, checking in the process smooth dependence of all constructed objects on the parameter. 
 
\subsection{Free systems of vector fields}
A fundamental step in the paper \cite{Rothschild-Stein} is a construction of the lift of a given system of vector fields, satisfying H\"ormander's condition, to a systems of vector fields in a space of larger dimension, which is free of order $m$. Therefore, we begin with necessary preliminaries on free systems of vector field, following \cite{Rothschild-Stein,Hoermander-Melin}. 

Let $X_1,\ldots, X_d$ be a family of smooth vector fields defined in a domain $U\subset \mathbb R^p$. Denote by $\mathcal X(U)$ the space of smooth vector fields in $U$. For any $X\in \mathcal X(U)$, we define the linear operator $\operatorname{ad} X$ in $\mathcal X(U)$, which takes any $Y\in \mathcal X(U)$ to the vector field $[X, Y]$. For any set $I=(i_1,\ldots, i_k)$ of $k=|I|$ integers in the interval between $1$ and $d$, we define the corresponding basic commutator $X_{[I]}\in \mathcal X(U)$ by
\[
X_{[I]} = \operatorname{ad} X_{i_1}\ldots \operatorname{ad} X_{i_{k-1}} X_{i_k}.
\]
For any $k\geq 1$ and $x\in U$, we denote by $H^k_x$ the subspace of $\mathbb R^p$ spanned by the values of all possible basic commutators $X_{[I]}$ of order $|I|\leq m$ in $x$:
\[
H^k_x=\langle X_{[I]}(x), |I|\leq k\rangle. 
\]
Denote by $\mathfrak g_{d,m}$ the free nilpotent Lie algebra of step $m$ with $d$ generators, that is, the quotient Lie algebra of the free Lie algebra with $d$ generators $\mathfrak g_d$ by the ideal $\mathfrak g^{m+1}_d$, where $\mathfrak g^{m+1}_d$ is the $(m+1)$-th term of the lower central series of the Lie algebra $\mathfrak g_d$ defined in the following way: $\mathfrak g^1_d=\mathfrak g_d$ and $\mathfrak g^{k}_d=[\mathfrak g^{k-1}_d,\mathfrak g_d]$ for any $k>1$. It is easy to see that $\dim H^k_x\leq \dim \mathfrak g_{d,k}$ for any $x\in U$.  

\begin{definition}
Vector fields $X_1,\ldots,X_d$ are free of order $m$ at $x\in U$, if $\dim H^m_x= \dim \mathfrak g_{d,m}$. 
\end{definition}

One can give another, more detailed definition of a free system of vector fields, following the paper \cite{Hoermander-Melin}. For any set $I=(i_1,\ldots, i_k)$, define a differential operator of order $k$ on $U$ by the formula
\[
X_I=X_{i_1}\ldots X_{i_k}.
\]
Using only the definition of the commutator, any vector field $X_{[I]}$ can be written as a linear combination of differential operators $X_I$:
\[  
X_{[I]}=\sum_{J}A_{IJ}X_J.
\]
For instance, for $|I|=|J|=1$, we have $A_{IJ}=\delta_{i_1j_1}$, for $|I|=|J|=2$, $A_{IJ}=\delta_{i_1j_1}\delta_{i_2j_2}-\delta_{i_1j_2}\delta_{i_2j_1}$ and so on. It is clear that $A_{IJ}=0$, if $|I|\neq |J|$.  

The properties of the Lie bracket: anti-commutativity  
\[
\operatorname{ad} X_{i_{1}} X_{i_2}+\operatorname{ad} X_{i_2} X_{i_1}=0, 
\]
and the Jacobi identity
\[ \operatorname{ad} X_{i_{1}} \operatorname{ad} X_{i_{2}} X_{i_3}+\operatorname{ad} X_{i_{3}} \operatorname{ad} X_{i_{1}} X_{i_2}+\operatorname{ad} X_{i_{2}} \operatorname{ad} X_{i_{3}} X_{i_1}=0,
\]   
determine nontrivial linear relations between the coefficients $A_{IJ}$:
\[
\sum_{I}a_IA_{IJ}=0 \quad \text{for any}\ J.
\]
For instance, for $|I|=|J|=2$, we have $A_{i_1i_2J}+A_{i_2i_1J}=0$. Any linear relation of such a kind gives rise to a linear relation between vector fields $X_{[I]}$:
\[
\sum_{I}a_IX_{[I]}=0.
\]
For instance, for $|I|=|J|=2$, we have $X_{[i_1i_2]}+X_{[i_2i_1]}=0$. 

The vector fields $X_1,\ldots, X_d$ are free of order $m$ at $x\in U$ if and only if any linear relation between $X_{[I]}(x)$ has the above form:
\[
\sum_{|I|\leq m}a_IX_{[I]}(x)=0\Rightarrow \sum_{|I|\leq m}a_IA_{IJ}=0.
\]

\subsection{The lifting theorem}\label{s:lifting}
Fix $m\in M$. Let $\Omega_0$ be a neighborhood of $m$ on which a local orthonormal frame $X_1,\ldots, X_d$ in $H$ is defined. One can assume that $\Omega_0$ is a foliated coordinate neighborhood $\Omega_0\cong U_0\times T_0$, $U_0\subset \mathbb R^p$,  $T_0\subset \mathbb R^q$, and $m$ corresponds to $(0,0)\in U_0\times T_0$. For any $r\geq 1$ and $(x,y)\in U_0\times T_0$, we denote by $H^r_{(x,y)}$ the subspace of $\mathbb R^p$ spanned by the values of all possible basic commutators $X_{[I]}$ of order $|I|\leq r$ at $(x,y)$. Recall that, by assumption, for any $y\in T_0$, the family $\{X_{1,y},\ldots,X_{d,y}\}$ of smooth vector fields in $U_0\subset \mathbb R^p$ satisfies H\"ormander's condition: for some $m$, the subspace $H^m_{(x,y)}$ coincides with $\mathbb R^p$ for any $x\in U_0$.

A generalization of the Rothschild-Stein theorem on lifting of vector fields, which we need, is formulated in the following way. 

\begin{theorem} \label{t:lifting}
Assume that for some $m$ the subspace $H^m_{(0,0)}$ coincides with $\mathbb R^p$. Then there exist a neighborhood $U$ of the origin in $U_0$, a neighborhood $T$ of teh origin in $T_0$, a neighborhood $U^\prime$ of the origin in $\mathbb R^k$, $k= \dim \mathfrak g_{d,m}-p$, and smooth vector field $\tilde X_1,\ldots , \tilde X_d$ on $U\times U^\prime\times T$ of the form
\begin{equation}\label{e:lift}
\tilde X_{i}(x,x^\prime,y)= X_{i}(x,y)+\sum_{j=1}^k
u_{ij}(x,x^\prime,y)\frac{\partial}{\partial x^\prime_j},
\end{equation}
such that, at each point $(x,x^\prime,y)\in U\times U^\prime\times T$, the vector fields $\tilde X_1,\ldots , \tilde X_d$ are free of order $m$ and the subspace $\tilde H^m_{(x,x^\prime,y)}$ coincides with $\mathbb R^{p+k}\subset \mathbb R^{p+k}\oplus \mathbb R^q$. 
\end{theorem}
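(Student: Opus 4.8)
The plan is to go through the proof of the lifting theorem of Rothschild and Stein (\cite[Theorem~4]{Rothschild-Stein}; see also \cite{Hoermander-Melin}) step by step, carrying the transverse variable $y$ along as a parameter, and to verify that every object it produces depends smoothly on $y$ and involves no differentiation in $y$. One cannot simply apply \cite{Rothschild-Stein} to the family $X_1,\dots,X_d$ regarded as vector fields on $U_0\times T_0\subset\mathbb R^{p+q}$: since each $X_i$ is tangent to the plaques $U_0\times\{y\}$, so are all of their iterated brackets, and Hörmander's condition therefore fails in the $y$-directions. On the other hand, for each fixed $y\in T_0$ the system $\{X_{1,y},\dots,X_{d,y}\}$ does satisfy Hörmander's condition of step $m$ on $U_0$, so the unparametrized theorem applies slice by slice and the only thing to establish is the joint smoothness of the output.

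Recall the shape of the Rothschild--Stein construction. Set $N=\dim\mathfrak g_{d,m}$, so that $k=N-p$. One first fixes, once and for all, a family of multi-indices $I_1,\dots,I_N$ with $|I_1|\le\dots\le|I_N|\le m$ such that the classes of the basic commutators $X_{[I_1]},\dots,X_{[I_N]}$ form a basis of the free nilpotent Lie algebra $\mathfrak g_{d,m}$; this is a purely combinatorial choice, independent of $(x,y)$. Using $H^m_{(0,0)}=\mathbb R^p$ one arranges in addition that $X_{[I_1]}(0,0),\dots,X_{[I_p]}(0,0)$ span $\mathbb R^p$. Since this is the non-vanishing of a determinant, there are neighborhoods $U\subset U_0$ of $0$ and $T\subset T_0$ of $0$ such that $X_{[I_1]}(x,y),\dots,X_{[I_p]}(x,y)$ remain a basis of $\mathbb R^p$ for all $(x,y)\in U\times T$; this is the only place the openness of the hypothesis enters. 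One then introduces $k$ new coordinates $x'_1,\dots,x'_k$ labelled by the ``missing'' indices $I_{p+1},\dots,I_N$, seeks $\tilde X_i$ of the form \eqref{e:lift}, and determines the coefficients $u_{ij}$ recursively in the weight $|I|=1,2,\dots,m$, the requirement at weight $w$ being that each lifted basic commutator $\tilde X_{[I_j]}$ with $|I_j|=w$ and $j>p$ have the triangular form $\partial/\partial x'_{j-p}$ plus a linear combination of the $\partial/\partial x_i$ and of the $\partial/\partial x'_{l-p}$ with $l<j$, with already-constructed coefficients. At the end of the recursion the $N\times N$ matrix formed by the $\mathbb R^{p+k}$-components of $\tilde X_{[I_1]},\dots,\tilde X_{[I_N]}$ is block-triangular, with the invertible block $\bigl(X_{[I_i]}(x,y)\bigr)_{1\le i\le p}$ in one corner and $1$'s on the rest of the diagonal; hence it is invertible at every point of $U\times U'\times T$. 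This gives $\dim\tilde H^m_{(x,x',y)}=N$, and since one always has $\dim\tilde H^m\le\dim\mathfrak g_{d,m}=N$, the lifted fields are free of order $m$ at every point.

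The parametrized statement is then a matter of inspecting this recursion. The coefficients $u_{ij}$ of the $w$-th stratum are obtained from those of the earlier strata and from the $X_{i,y}$ by a combination of: solving linear systems by Cramer's rule using the invertible matrices of the previous strata; elementary integration in the variables $x'$; and, where flows of already-constructed vector fields are used to straighten things out, standard existence theory for ordinary differential equations. In each case the entries that enter are smooth functions of $(x,x',y)$ — the $X_{i,y}$ and their $x$-derivatives are smooth in $(x,y)$ by hypothesis — and smooth dependence on a parameter is preserved by matrix inversion, by integration depending smoothly on a parameter, and by smooth dependence of solutions of ordinary differential equations on parameters and initial data. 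Furthermore, every correction term added along the recursion involves only $\partial/\partial x_i$ and $\partial/\partial x'_j$, so the resulting $\tilde X_i$ have exactly the form \eqref{e:lift} with no component in the $y$-directions, and consequently neither do their basic commutators, which is why $\tilde H^m_{(x,x',y)}$ lies in $\mathbb R^{p+k}$. Shrinking $U$, $U'$ and $T$ once more, if necessary, so that the finitely many determinants occurring along the way stay away from zero, we obtain $\tilde X_1,\dots,\tilde X_d$ on $U\times U'\times T$ with all the asserted properties.

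The only real difficulty is bookkeeping. One must check that no step of the construction forces a choice that could vary discontinuously in $y$; the sole such choice — which basic commutators to use for the ``flag'' part — is frozen at the base point $(0,0)$ and persists on a whole neighborhood precisely because the spanning condition is open, as noted above. One must also ensure that a single $T$ works simultaneously for all $x\in U$ and $x'\in U'$, which follows from the construction producing its coefficients by formulas valid on an open product set (alternatively, by a compactness argument after passing to relatively compact neighborhoods). No phenomenon genuinely new relative to \cite{Rothschild-Stein} appears; the content, and the reason the construction must be reproduced in Section~\ref{s:hypo-family} rather than merely quoted, is exactly this uniform and smooth-in-$y$ bookkeeping.
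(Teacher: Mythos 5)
Your proof is correct in spirit and pursues the same overall strategy as the paper --- extend the Rothschild--Stein lifting theorem to a parametrized family while checking smooth dependence --- but the implementation differs in a way worth noting. The paper invokes Proposition~3 of H\"ormander--Melin, which adds one auxiliary coordinate at a time and raises $\dim H^m$ at the base point by exactly one; iterating it $k$ times yields freeness of order $m$ at the origin, and the passage to a full neighborhood $U\times U'\times T$ is obtained in a single stroke from the lower semi-continuity of $(x,x',y)\mapsto\dim\tilde H^m_{(x,x',y)}$, with no need to exhibit an explicit invertible matrix. You instead reproduce the Rothschild--Stein recursion by weight in one pass, introducing all $k$ coordinates labelled by the ``missing'' basic commutators at once and reading off freeness on the neighborhood from the block-triangular invertibility of the matrix of lifted commutators. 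Both routes are sound; yours has the merit of making explicit what the paper labels a ``slight generalization'' without detailing, namely that every step of the recursion (Cramer's rule inversions, integration in $x'$, flows of already-constructed vector fields) produces coefficients depending smoothly on $(x,x',y)$ and never introduces $\partial/\partial y$-components, so the form \eqref{e:lift} persists. One bookkeeping imprecision: a basis $I_1,\dots,I_N$ of basic commutators ordered by nondecreasing weight cannot, in general, be arranged so that $X_{[I_1]}(0,0),\dots,X_{[I_p]}(0,0)$ span $\mathbb R^p$, since a redundant low-weight commutator may sit before a needed higher-weight one. The standard Rothschild--Stein flag construction fixes this: for each weight $w$, choose a subset of the weight-$w$ basic commutators whose classes modulo $H^{w-1}_{(0,0)}$ form a basis of $H^w_{(0,0)}/H^{w-1}_{(0,0)}$; the union of these subsets has cardinality $p$ and spans $\mathbb R^p$, and the remaining $k$ basis elements of $\mathfrak g_{d,m}$ label the new variables $x'_j$. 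This is surely what you intend, but the ordering statement as written is slightly off.
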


\begin{proof}
We will give the proof of Theorem \ref{t:lifting} by a slight generalization of the proof of the Rothschild-Stein theorem given in \cite{Hoermander-Melin}. A crucial role in \cite{Hoermander-Melin} is played by the following statement. 

\begin{prop}[\cite{Hoermander-Melin}, Proposition 3]\label{p:lift} Let $X_{1},\ldots, X_{d}$ be a family of smooth vector fields defined in a domain $U\subset \mathbb R^N$. Suppose that $X_1,\ldots, X_d$ are free of order $s-1$ at $0$, but not free of order $s$. Then there exist vector fields $\tilde X_{j}$ on $U\times \mathbb R \subset \mathbb R^{N+1}$ of the form
\begin{equation}\label{e:tildeX}
\tilde X_{j}(x, t)= X_{j}(x)+ u_j(x,t)\frac{\partial}{\partial t},\quad j= 1,\ldots, d,
\end{equation}
where $u_j\in C^\infty(U\times \mathbb R)$, such that the vector fields $\tilde X_{j}$ remain free of order $s-1$ at $0$ and, for any $r \geq s$, 
\[
\dim\tilde H^r_{(0,0)}=\dim H^r_{0}+1.
\]
\end{prop}

Recall that the dimension of $H^r_{0}$ is bounded from above by the dimension of the free nilpotent Lie algebra $\mathfrak g_{d,r}$. Assume that the vector fields $X_1,\ldots, X_d$ are not free of order $m$ at the origin, that is, $\dim H^m_{0}=p<\dim \mathfrak g_{d,m}$ (otherwise, $k=0$ and it is sufficient to put $\tilde X_j=X_j$ in order to complete the proof). Then we can apply Proposition \ref{p:lift} for some $s\leq m$ and construct a new system of vector fields $\tilde X_{1},\ldots, \tilde X_d$ such that $\dim\tilde H^m_{(0,0)}=\dim H^m_{0}+1$. After $k$ steps, we obtain a family  of vector fields, free of order $m$ at $0$: $\tilde H^m_{(0,0,0)}=p+k=\dim \mathfrak g_{n,m}$. Since the dimension of $\tilde H^m_{(x,t,y)}$ is a lower semi-continuous function of $(x,t,y)$, the equality $\dim \tilde H^m_{(x,t,y)}=p+k=\dim \mathfrak g_{n,m}$ holds in some neighborhood of $(0,0,0)$. Without loss of generality, we may assume thts this neighborhood has the form $U\times U^\prime\times T$, as claimed in the theorem, and the vector fields $\tilde X_1,\ldots , \tilde X_d$ have the form given by \eqref{e:tildeX}.
\end{proof} 

\subsection{The approximation theorem}
Let $G_{d,m}$ be a connected, simply connected Lie group of the Lie algebra ${\mathfrak g}_{d,m}$. Denote by $Y_1,\ldots, Y_d$ a basis of the Lei algebra ${\mathfrak g}_{d,m}$, identified with left-invariant vector fields on $G_{d,m}$. The next step is to approximate the lifted vector fields $\tilde X_{1},\ldots, \tilde X_{d}$ constructed in Theorem~\ref{t:lifting} by the vector fields $Y_1,\ldots, Y_d$ on $G_{d,m}$. 

Recall that a Lie algebra $\mathfrak g$ is called a graded Lie algebra, if it is represented as a direct sum of linear spaces ${\mathfrak g}_{d,m}=\bigoplus_{j=1}^m V^j$, satisfying the following properties: $[V^j, V^k]\subset  V^{k+j}$, if $k+j\leq m$, and $[V^j, V^k]=\{0\}$, if $k+j>m$. Note that a graded Lie algebra is always nilpotent (of step $m$). In particular,  the exponential map $\exp : \mathfrak g\to G$ maps diffeomorphically a graded Lie algebra $\mathfrak g$ on the correspoding simply connected Lie group $G$.

The Lie algebra ${\mathfrak g}_{d,m}$ is a graded Lie algebra: ${\mathfrak g}_{d,m}=\bigoplus_{j=1}^m V^j$, where $V^j=\mathfrak g^{j}_{d,m}$ is the $j$-th term of the lower central series of $\mathfrak g_{d,m}$. In other words, $V^j$ is the linear subspace spanned by the basic commutators of order $j$

We defined a natural family $\{\delta_t\}_{t\in {\mathbb R}_+}$  of dilations on the Lie algebra ${\mathfrak g}_{d,m}$, setting $\delta_t(Y)=t^{j}Y$ for $Y\in V^j$. The family $\{\delta_t\}$ is a one-parameter automorphism group of ${\mathfrak g}_{d,m}$. By means of the exponential map $\exp : \mathfrak g_{d,m}\to G_{d,m}$, one can lift this group to a one-parameter automorphism group of $G_{d,m}$, which will be also denoted by $\{\delta_t\}$.

A measurable function $f$ on $G_{d,m}$ is called homogeneous of degree $\lambda$, if $f\circ \delta_t =t^\lambda f$. A differential operator $D$ on $G_{d,m}$ is called homogeneous of degree $\lambda$,  if $D(f\circ \delta_t) =t^\lambda (Df)\circ \delta_t$ for any $t>0$. It is easy to see that if $f$ is a homogeneous function of degree $\alpha$ and $D$ is a homogeneous differential operator of degree $\lambda$, then $Df$ is a homogeneous function of degree $\alpha-\lambda$. 

We put $Y_{j1} =Y_j$ for $j=1,\ldots,d$, and, for each $k = 2,\ldots,m$, fix a basis $Y_{1k}, Y_{2k}, \ldots $ of the space $V^k$. All together vectors $Y_{jk}$, $k=1,\ldots,m$, $j=1,\ldots,\dim V^k$ form a basis of ${\mathfrak g}_{d,m}$. This basis determines a linear map $\mathbb R^N\to {\mathfrak g}_{d,m}$, where $N:=\dim {\mathfrak g}_{d,m}$. Taking the composition of this map with the exponential map, we get a coordinate system on $G_{d,m}$:
\[
(u_{jk}) \in \mathbb R^N\Leftrightarrow \exp(\sum u_{jk}Y_{jk})\in G_{d,m}.
\]

Now we return to the lifted vector fields, constructed in Theorem~\ref{t:lifting}. Thus, let $\tilde X_{1},\ldots, \tilde X_d$ be a family of vector fields, defined on $U\times U^\prime\times T$, where $U$ is a neighborhood of the origin in $\mathbb R^p$,  $U^\prime$  is a neighborhood of the origin in $\mathbb R^k$, $k= N-p$ and $T$  is a neighborhood of the origin in $\mathbb R^q$. We will use notation $\tilde U=U\times U^\prime\subset \mathbb R^N$ and $\tilde x=(x,x^\prime)\in \tilde U$. Suppose that the vector fields $\tilde X_{i}$ are free of order $m$ and the corresponding subspace $\tilde H^m_{(\tilde x,y)}$ coincides with $\mathbb R^{N}\subset \mathbb R^{N}\oplus \mathbb R^q$ at each $(\tilde x,y)\in \tilde U\times T$. We can consider each vector field $\tilde X_j$ as a family $\{\tilde X_{j,y}: y\in T\}$ of smooth vector fields on $\tilde U$ parameterized by $y\in T$. For any $y\in T$, the family of vector fields $\tilde X_{1,y},\ldots, \tilde X_{d,y}$ satisfies H\"ormander's condition of step  $m$ and is free of order $m$. Therefore, we can apply the construction of  \cite{Rothschild-Stein} to this system of vector fields for a fixed $y$.
 
For any $(\tilde x,y)\in \tilde U\times T$, put $\tilde X_{j1,y}(\tilde x) = \tilde X_{j,y}(\tilde x)$ for $j=1,\ldots,d$, and, for $k = 2,\ldots,m$, let $\tilde X_{1k,y}(\tilde x), \tilde X_{2k,y}(\tilde x), \ldots $ be the maximal linearly independent subset of commutators of order $k$ in $\tilde x$, which constructed from the vector fields $\tilde X_{j,y}$ exactly in the same way as the subset $Y_{1k}, Y_{2k}, \ldots $ is constructed from the vectors $Y_j$. Thus, all together vectors $\tilde X_{jk,y}(\tilde x)$ form a basis of $\tilde H^m_{(\tilde x,y)}$, which in its turn determines a linear isomorphism $\tilde H^m_{(\tilde x,y)}\cong \mathbb R^N$. For any $y\in T$, we fix a Riemannian metric on $\tilde U$ such that the basis $\{\tilde X_{jk,y}(\tilde x)\}$ is orthonormal at each $(\tilde x,y)\in \tilde U$ and denote by  $dv_y(\tilde x)$ the corresponding volume form on $\tilde U$. 

As above, using the exponential map on the Lie algebra of vector fields, we can construct a coordinate system on $\tilde U$ in a neighborhood of $\tilde x$:
\[
(u_{jk}) \in \mathbb R^N\Leftrightarrow \exp(\sum u_{jk}\tilde X_{jk,y})(\tilde x) \in \tilde U,\quad y\in T.
\]
Here $\{\exp(\tau \sum u_{jk}\tilde X_{jk,y}), \tau\in \mathbb R\}$ denotes the one-parameter family of local diffeomorphisms $\tilde U$, determined by the vector field $\sum u_{jk}\tilde X_{jk,y}$. 
 
Any vector field $X$, defined in the neighborhood $\Omega$ of zero in  $G_{d,m}$, can be written as
\[
X=\sum a_{jk}Y_{jk},\quad  a_{jk}\in C^\infty(\Omega).
\]
If we expand each coefficient $a_{jk}$ in Taylor series in $0$ in the coordinate system on $G_{d,m}$ defined above, we obtain a representation of $X$ as a formal series of homogeneous differential operators. (Observe that if $f$ is a homogeneous function of degree $i$, then $fY_{jk}$ is a homogeneous differential operator of degree $k-i$). We say that $X$ has local degree $\leq \lambda$, if each term of this formal series is a homogeneous differential operator of degree $\leq \lambda$. 

There is the following analogue of the Rothschild-Stein approximation theorem \cite[Theorem 5]{Rothschild-Stein}.
 
\begin{theorem}\label{t:Theta} There are a neighborhood $V$ of the origin in $G_{d,m}$, a neighborhood $\tilde U$ of the origin in $\mathbb R^N$, a neighborhood $T$ of the origin in $\mathbb R^k$ and a family of smooth maps $\Theta_y : \tilde U\times \tilde U \to V$, $y\in T$,  satisfying the following properties:
\begin{description}
\item[(a)] $\Theta_y(\tilde x_1,\tilde x) = \Theta_y(\tilde x,\tilde x_1)^{-1}$, in particular, $\Theta_y(\tilde x,\tilde x)= 0$;
\item[(b)] For every fixed $(\tilde x,y)\in \tilde U\times T$, the map $\Theta_{(\tilde x,y)}: \tilde x_1 \to \Theta_y(\tilde x,\tilde x_1)$ is a diffeomorphism of an neighborhood of $\tilde x$ on a neighborhood of the origin in $G_{d,m}$.
\item[(с)] For every $\tilde x\in \tilde U$ and $j=1,\ldots,d$, we have the equality $(\Theta_{(\tilde x,y)})_*\tilde X_{j,y}=Y_j + R_{j,y}$, where $R_{j,y}$ has local degree $\leq 0$ for any $y\in T$.
\end{description}
\end{theorem}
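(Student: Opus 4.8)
The plan is to carry out the Rothschild--Stein construction of the approximating map \emph{fiberwise} over the parameter space $T$, and then to certify that every object produced in that construction depends smoothly on $y$. The key observation is that the construction of $\Theta$ in \cite{Rothschild-Stein} uses only algebraic operations on vector fields, flows of smooth vector fields, the inverse function theorem, and finite Taylor expansions, and all of these preserve smooth dependence on parameters. Since, by Theorem~\ref{t:lifting}, the lifted fields $\tilde X_{j,y}$ and hence all of their iterated brackets depend jointly smoothly on $(\tilde x,y)\in \tilde U\times T$, the resulting family $\Theta_y$ will be smooth in $y$ once we check that the \emph{discrete} choices entering the construction can be made independently of $(\tilde x,y)$.

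First I would fix, once and for all, a Hall basis of the free nilpotent Lie algebra $\mathfrak g_{d,m}$; this is precisely the choice underlying the basis $Y_{jk}$ of $\mathfrak g_{d,m}$. Applying the same combinatorial recipe to the lifted fields produces, at each $(\tilde x,y)$, the basic commutators $\tilde X_{jk,y}(\tilde x)$. Because the $\tilde X_{j,y}$ are free of order $m$ at every point of $\tilde U\times T$, the only linear relations among their basic commutators of order $\le m$ are the universal ones (anti-commutativity and Jacobi), which the Hall basis avoids; hence $\{\tilde X_{jk,y}(\tilde x)\}$ is a basis of $\tilde H^m_{(\tilde x,y)}=\mathbb R^N$ at \emph{every} $(\tilde x,y)$, depending smoothly on $(\tilde x,y)$. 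This removes the only potential source of discontinuity. One then defines $\Theta_y(\tilde x,\tilde x_1)\in G_{d,m}$ via the exponential coordinates of $\tilde x_1$ relative to the moving frame based at $\tilde x$: the smooth map $F:(u,\tilde x,y)\mapsto \exp\bigl(\sum u_{jk}\tilde X_{jk,y}\bigr)(\tilde x)$ has, at $u=0$, invertible partial differential in $u$ (the frame is a basis), so by the inverse function theorem with parameters one solves $u=u(\tilde x_1,\tilde x,y)$ smoothly near the diagonal and sets $\Theta_y(\tilde x,\tilde x_1)=\exp\bigl(\sum u_{jk}(\tilde x_1,\tilde x,y)\,Y_{jk}\bigr)$. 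Property (b) is then immediate from nondegeneracy of the frame together with the fact that $\exp$ is a global diffeomorphism on the graded algebra $\mathfrak g_{d,m}$, and property (a) follows from the flow identity $\exp(-Z)=(\exp Z)^{-1}$ exactly as in \cite{Rothschild-Stein}, since $\tilde x_1=\exp(Z)(\tilde x)$ forces the coordinates of $\tilde x$ relative to $\tilde x_1$ to be $-u$.

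The substantive part is property (c). Following \cite{Rothschild-Stein}, one computes the pushforward $(\Theta_{(\tilde x,y)})_*\tilde X_{j,y}$, expands its coefficients in Taylor series at the origin in the exponential coordinates on $G_{d,m}$, and groups terms by homogeneity. Freeness of the $\tilde X_{j,y}$ is exactly what forces the bottom-degree part of this expansion to coincide with $Y_j$: in a free system there are no ``extra'' bracket relations, so the iterated-bracket structure of the $\tilde X_{j,y}$ agrees with that of the $Y_j$ modulo terms of lower order. Hence $R_{j,y}:=(\Theta_{(\tilde x,y)})_*\tilde X_{j,y}-Y_j$ has local degree $\le 0$. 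The only addition to the Rothschild--Stein argument is that, since $\tilde X_{j,y}$ and all derived commutators are jointly smooth in $(\tilde x,y)$, so are the coefficients of $(\Theta_{(\tilde x,y)})_*\tilde X_{j,y}$ and hence the coefficients in their Taylor expansions; consequently $R_{j,y}$ depends smoothly on $y$ and its local-degree bound holds uniformly for $y$ in a compact neighbourhood of the origin. Shrinking $\tilde U$, $V$ and $T$ if necessary completes the proof.

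The hard part, and the only thing that truly needs care, is the bookkeeping in this last step: in \cite{Rothschild-Stein} the homogeneity estimates are pointwise in the base point $\tilde x$, and one must upgrade them to estimates that are $C^\infty$ in the extra parameter $y$. Concretely this reduces to the elementary but tedious facts that the remainder in a finite-order Taylor expansion of a jointly smooth function of $(\tilde x,x',y)$ is again jointly smooth, and that the exponential coordinate maps, themselves smooth in $y$, interact with the dilations $\delta_t$ in the way needed to read off local degrees. No new idea beyond Rothschild--Stein is required; the content of this section is precisely to verify that their construction, performed verbatim with a parameter, stays smooth in that parameter.
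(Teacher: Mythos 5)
Your construction is exactly the one used in the paper: define $\Theta_y$ via the exponential coordinates $\tilde x_1=\exp(\sum u_{jk}\tilde X_{jk,y})(\tilde x)\mapsto\exp(\sum u_{jk}Y_{jk})$, note that smooth dependence on $y$ follows directly from the joint smoothness of the lifted frame, and cite Rothschild--Stein for the algebraic verification of (c). Your proposal is correct and essentially identical in approach; it merely spells out the inverse-function-theorem-with-parameters and Hall-basis details that the paper leaves implicit.
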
  
 
\begin{proof}
The map $\Theta_y: \tilde U\times \tilde U \to V$ is defined as follows. If $\tilde x_1 =\exp(\sum u_{jk}\tilde X_{jk,y})(\tilde x)$, then $\Theta_y(\tilde x,\tilde x_1)=\exp(\sum u_{jk}Y_{jk})\in V$.  Its smooth dependence on $y$ follows directly from the definitions. All remaining statements, in particular, statement (c), are of algebraic nature and, therefore, can be easily deduced from the corresponding results of \cite{Rothschild-Stein} (cf. \cite[Theorem 5]{Rothschild-Stein}). 
\end{proof}

\subsection{The construction of a parametrix for the lifted operator}
In this section, we describe the parametrix construction for the family of H\"ormander type operators defined by the system of the lifted vector fields $\tilde X_{1},\ldots, \tilde X_d$ in the neighborhood $\tilde U$, given by Theorem \ref{t:Theta}. First of all, we need some information on left-invariant operators on the nilpotent Lie group $G_{d,m}$. 

Fix an inner product on $\mathfrak g_{d,m}$ such that the basis $\{Y_{jk}\}$ in $\mathfrak g_{d,m}$ is orthonormal. By the exponential map, the Lebesgue measure on $\mathfrak g_{d,m}$ corresponds to a left-invariant Haar measure $du$ on $G_{d,m}$. The number $Q = \sum_{j=1}^mj\dim V^j$ is called the homogeneous dimension of $G_{d,m}$. The following relation holds:
\[
d(\delta_tu) = t^Qdu, \quad t > 0.
\]

Fix a homogeneous norm $|\cdot|$ on $G_{d,m}$. We recall that a homogeneous norm on $G_{d,m}$ is a continuous function $u\mapsto|u|$ on $G_{d,m}$ with values in $[0,\infty)$, which is smooth outside of zero, homogeneous of degree one: $|\delta_t(u)|=t|u|, t>0$, and satisfies the conditions: (a) $|u| = |u^{-1}|$ for any $u\in G_{d,m}$, (b) $|u| = 0$ if and only if $u = 0$. The construction of such a function on $G_{d,m}$ is given in \cite{Folland75}. 

For $\lambda > 0$, a kernel of type $\lambda$ is a function on $G_{d,m}$, which is smooth outside the origin and homogeneous of degree $\lambda - Q$:
\[
k(\delta_tu)=t^{\lambda-Q}k(u),\quad t>0. 
\]
Any kernel of type $\lambda$ is a locally integrable function on $G_{d,m}$ and, therefore, defines a distribution on $G_{d,m}$, homogeneous of degree $\lambda$. Similarly, a singular integral kernel is a function $k$ on $G_{d,m}$, which is smooth outside the origin, is homogeneous of degree $-Q$ and, for any $a, b, 0 < a < b < \infty$,  satisfies the condition
\[
\int_{a<|u|<b} k(u)du = 0.
\]
A singular integral kernel $k$ defines a distribution on $G_{d,m}$ by using the principal value integral. A kernel of type zero is called any distribution on $G_{d,m}$ of the form $k + c\delta_0$, where $k$ is a singular integral kernel, $\delta_0$ is the delta-function at the origin and $c$ is some complex number. 

Now we consider the neighborhoods $\tilde U$ and $T$ of the origin in $\mathbb R^N$ and $\mathbb R^q$ respectively, given by Theorem \ref{t:Theta}. For $\lambda\geq 0$, a kernel of type $\lambda$ is called a family $\{K_y : y\in T\}$ of functions on $\tilde U\times \tilde U$, which can be written for any $l\in \mathbb N$ as 
\[
K_y(\tilde x,\tilde x_1)=\sum_{i=1}^s a_i(\tilde x,y)k^{(i)}_{\tilde x,y}(\Theta_y(\tilde x_1,\tilde x)) b_i(\tilde x_1,y)+E_{l,y}(\tilde x,\tilde x_1), \quad (\tilde x,\tilde x_1)\in \tilde U\times \tilde U,
\] 
where $E_{l,y}\in C^l(\overline{\tilde U}\times \overline{\tilde U})$ and $a_{i,y}, b_{i,y}\in C^\infty_c(\tilde U)$, $i=1,\ldots,s$ depend smoothly on $y$ and, for any $i$, the function $u\in V \mapsto k^{(i)}_{\tilde x,y}(u)$ is a kernel of type $\geq \lambda$, depending smoothly on $(\tilde x,y)$.

A kernel $\{K_y : y\in T\}$ of type $\lambda\geq 0$ defines a family of operators $\{P_y: C^\infty_c(\tilde U) \to C^\infty(\tilde U) : y\in T\}$ by
\[
P_yf(\tilde x)=\int_{\tilde U}K_y(\tilde x, \tilde x_1) f(\tilde x_1)dv_y(\tilde x_1), \quad f\in C^\infty_c(\tilde U).
\]
The family $\{P_y: y\in T\}$ will be called a family of operators of type $\lambda$ on $\tilde U$.

\begin{prop}\label{p:L2}
A family $\{P_y : y\in T\}$ of operators of type $\lambda \leq 0$ on $\tilde U$ defines a continuous in uniform operator topology family of bounded operators in $L^2(\tilde U)$.
\end{prop}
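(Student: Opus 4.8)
The plan is to reduce the assertion, via the definition of a kernel of type $\lambda$, to the $L^2$-boundedness of singular integral convolution operators on the homogeneous nilpotent group $G_{d,m}$, while keeping track of the dependence of every constant on the parameter $y$. (Note that, since kernels of type $\lambda$ were introduced only for $\lambda\geq 0$, the case at issue is $\lambda=0$, the values $\lambda>0$ being only easier and in fact yielding a smoothing operator.)

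First I would write, for each $y\in T$ and some large $l$,
\[
K_y(\tilde x,\tilde x_1)=\sum_{i=1}^s a_i(\tilde x,y)\,k^{(i)}_{\tilde x,y}\big(\Theta_y(\tilde x_1,\tilde x)\big)\,b_i(\tilde x_1,y)+E_{l,y}(\tilde x,\tilde x_1),
\]
and dispose of the easy pieces. The family $\{E_{l,y}\}$ consists of Hilbert--Schmidt operators with compactly supported $C^l$ kernels, so their Hilbert--Schmidt norms are bounded and depend continuously (indeed, $C^l$) on $y$. Multiplication by $a_{i,y}$ and by $b_{i,y}$ is bounded on $L^2(\tilde U)$ with norm equal to the sup-norm, which is bounded and continuous in $y$, and $dv_y$ is comparable to Lebesgue measure uniformly and smoothly in $y$; hence it suffices to treat the family of operators $T_{i,y}$ with kernel $k^{(i)}_{\tilde x,y}\big(\Theta_y(\tilde x_1,\tilde x)\big)$. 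If the kernel $k^{(i)}_{\tilde x,y}$ happens to be of type $\lambda'>0$, then it is dominated near the origin by $C|u|^{\lambda'-Q}$, hence locally integrable; since the maps $\tilde x_1\mapsto\Theta_y(\tilde x,\tilde x_1)$ are diffeomorphisms onto neighbourhoods of $0$ in $G_{d,m}$ with Jacobians bounded above and below uniformly in $(\tilde x,y)$, a Schur test gives a uniform bound, with continuity in $y$ by dominated convergence.

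The genuine case is that of a kernel $k^{(i)}_{\tilde x,y}=\kappa_{\tilde x,y}+c(\tilde x,y)\delta_0$ of type $0$, with $\kappa_{\tilde x,y}$ a singular integral kernel on $G_{d,m}$. The $\delta_0$-term is again a multiplication operator. For the principal value part I would invoke the $L^2$-boundedness of convolution by singular integral kernels on the homogeneous group $G_{d,m}$ (see \cite{Rothschild-Stein} and the references therein): the $L^2(G_{d,m})$-operator norm of $f\mapsto f\ast\kappa_{\tilde x_0,y}$ is dominated by an absolute constant times a fixed finite number of $C^k$-seminorms of $\kappa_{\tilde x_0,y}$ on the unit sphere, plus $|c(\tilde x_0,y)|$. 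Since $k^{(i)}$ depends smoothly on $(\tilde x,y)$ and $(\tilde x,y)$ runs over a relatively compact set, these seminorms are bounded uniformly and vary continuously with $(\tilde x,y)$. The transfer of this estimate from $G_{d,m}$ back to $\tilde U$ is carried out through the family of diffeomorphisms $\Theta_y$, exactly as in the positive-type case.

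The main obstacle, precisely as in \cite{Rothschild-Stein}, is that the kernel of $T_{i,y}$ depends on the base point $\tilde x$ through $k^{(i)}_{\tilde x,y}$ and not merely through $\Theta_y(\tilde x_1,\tilde x)$. The remedy is the Rothschild--Stein freezing argument: fix a base point $\tilde x_0$, subtract the corresponding frozen convolution operator (controlled by the preceding paragraph), and check, as in \cite{Rothschild-Stein}, that the remainder, with kernel $\big(k^{(i)}_{\tilde x,y}-k^{(i)}_{\tilde x_0,y}\big)\big(\Theta_y(\tilde x_1,\tilde x)\big)$, is an operator of strictly positive type — a Taylor expansion in $\tilde x$ at $\tilde x_0$, together with the fact that $|\Theta_y(\tilde x,\tilde x_0)|$ controls the distance from $\tilde x$ to $\tilde x_0$, produces the gain — so that it falls under the easy case; a partition of unity and summation over base points then reassembles the full estimate. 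The only new ingredient is bookkeeping: every constant in this argument depends on nothing but finitely many $C^k$-seminorms of $a_i$, $b_i$, $k^{(i)}$ and of the maps $\Theta_y$, together with positive lower bounds for the relevant Jacobians; by the smooth dependence on the parameter built into Theorems \ref{t:lifting} and \ref{t:Theta} and into the very definition of a kernel of type $\lambda$, these quantities are bounded uniformly in $y\in T$ and depend continuously on $y$. It follows that $\|P_y\|$ is dominated by a continuous function of $y$; applying the same estimate to the difference $P_y-P_{y'}$, whose defining data are the differences of those of $P_y$ and $P_{y'}$ and hence tend to $0$ in the relevant $C^k$-topologies as $y'\to y$, yields continuity of $y\mapsto P_y$ in the uniform operator topology.
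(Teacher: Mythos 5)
Your proposal is correct and follows essentially the same route as the paper: the strictly positive-type part is handled by the pointwise estimate $|K_y(\tilde x,\tilde x_1)|\leq C\,\rho_y(\tilde x,\tilde x_1)^{\lambda-Q}$ together with a Schur-type integrability argument, and the type-$0$ part is referred to (in the paper) and unpacked (in your write-up) as a parameter-tracking version of Rothschild--Stein Theorem~6, i.e.\ the freezing argument reducing to $L^2$-boundedness of convolution by singular integral kernels on $G_{d,m}$. You also correctly identify that the ``$\lambda\leq 0$'' in the statement must be read as ``$\lambda\geq 0$'' given the definition of a kernel of type $\lambda$, and your bookkeeping of uniformity and continuity in $y$ is exactly the content the paper leaves as a ``straightforward generalization.''
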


\begin{proof}
We observe that if $K$ is a kernel of type $\lambda$, then 
\[
|K_y(\tilde x, \tilde x_1)|\leq C(\rho_y(\tilde x, \tilde x_1))^{-Q+\lambda},
\]
where $\rho_y$ is the pseudo-metric given by $\rho_y(\tilde x, \tilde x_1)=|\Theta_y(\tilde x, \tilde x_1)|$. Therefore, for $\lambda>0$, we have 
\[
\int_{\rho_y(\tilde x, \tilde x_1)<C}|K_y(\tilde x, \tilde x_1)|dv_y(\tilde x_1) \leq C\int_{|u|<C}|u|^{-Q+\lambda}du<\infty,
\]
which easily implies the desired statement for $\lambda>0$. Its proof in the case $\lambda=0$ is given by a rather straightforward generalization of the proof of Theorem 6 in \cite{Rothschild-Stein}.
\end{proof}

Let $\tilde X_{1},\ldots, \tilde X_{n}$ be the family of vector fields on $\tilde U\times T$, constructed in Theorem~\ref{t:lifting}. We will consider every vector field $\tilde X_j$ as a smooth family $\{\tilde X_{j,y}: y\in T\}$ of vector fields on $\tilde U$. For any $y\in T$, the system of vector fields $\tilde X_{1,y},\ldots, \tilde X_{d,y}$ satisfies the H\"ormander condition of step $m$ and free of order $m$. Consider the family 
\begin{equation}\label{e:tildeLy}
\tilde L_y = \sum_{j=1}^d\tilde X_{j,y}^2.
\end{equation}
of H\"ormander sum of squares type operators in the domain $\tilde U$, smoothly depending on $y\in T$.  

\begin{theorem}\label{t:tilde-param}
For any $\psi\in C^\infty_c(\tilde U\times T)$, there exists a family $\{\tilde P_y : y\in T\}$ of operators of type $2$ on $\tilde U$, compactly supported in $\tilde U\times T$, such that 
\[
\tilde L_y\tilde P_y=\psi_y I-\tilde R_y,\quad \tilde P_y\tilde L_y=\psi_yI -\tilde S_y,\quad y\in T,
\]
where $\{\tilde R_y\}$ and $\{\tilde S_y\}$ are families of operators of type 1 on $\tilde U$, compactly supported in $\tilde U\times T$. 
\end{theorem}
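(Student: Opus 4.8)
The plan is to run the Rothschild--Stein parametrix construction for a single H\"ormander operator, carrying the parameter $y$ through every step, and to exploit the fact that after the lifting of Theorem~\ref{t:lifting} and the approximation of Theorem~\ref{t:Theta} the relevant ``model'' object lives on the fixed nilpotent group $G_{d,m}$ and does not depend on $y$. Let $\mathcal L_{0}=\sum_{j=1}^{d}Y_{j}^{2}$ be the canonical sub-Laplacian on $G_{d,m}$; it is left-invariant, homogeneous of degree $2$, and hypoelliptic by H\"ormander's theorem. If the homogeneous dimension $Q$ of $G_{d,m}$ satisfies $Q\le 2$, then the operators $\tilde L_{y}$ are in fact elliptic and the assertion reduces to the classical parametrix construction for a smooth family of elliptic operators; so we may and do assume $Q\ge 3$. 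By Folland's theorem \cite{Folland75}, $\mathcal L_{0}$ then has a unique fundamental solution $\Gamma$ which is a kernel of type $2$ on $G_{d,m}$ (smooth off the origin, homogeneous of degree $2-Q$), with $\mathcal L_{0}\Gamma=\delta_{0}$, and such that $Y_{i}\Gamma$ and $Y_{i}Y_{j}\Gamma$ are kernels of type $1$ and type $0$ respectively. The essential point is that $\Gamma$ is a single, $y$-independent object.

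First I would build a zeroth approximation on $\tilde U$ by transferring $\Gamma$ through the maps $\Theta_{y}$. Fix $b\in C^{\infty}_{c}(\tilde U\times T)$, supported in a fixed compact subset of $\tilde U\times T$, with $b_{y}\equiv 1$ near $\operatorname{supp}\psi_{y}$ for every $y$, and let $\tilde P^{(0)}_{y}$ be the operator on $\tilde U$ with Schwartz kernel
\[
K^{(0)}_{y}(\tilde x,\tilde x_{1})=\psi_{y}(\tilde x)\,\Gamma\!\left(\Theta_{y}(\tilde x_{1},\tilde x)\right)b_{y}(\tilde x_{1}).
\]
By the very definition of a kernel of type $\lambda$ depending smoothly on $y$ recalled above (with $s=1$, $k^{(1)}_{\tilde x,y}=\Gamma$, $E_{l,y}=0$), the family $\{\tilde P^{(0)}_{y}\}$ is a family of operators of type $2$ on $\tilde U$, compactly supported in $\tilde U\times T$; its smooth dependence on $y$ is automatic, since the only $y$-dependent ingredients are the fields $\tilde X_{j,y}$ (smooth in $y$ by Theorem~\ref{t:lifting}), the map $\Theta_{y}$ (smooth in $y$ by Theorem~\ref{t:Theta}), the volume forms $dv_{y}$, and the cutoffs, while $\Gamma$ itself is fixed.

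Next I would compute the two defects $\tilde L_{y}\tilde P^{(0)}_{y}-\psi_{y}I$ and $\tilde P^{(0)}_{y}\tilde L_{y}-\psi_{y}I$. For fixed $(\tilde x,y)$ the map $\Theta_{(\tilde x,y)}$ is a diffeomorphism sending $\tilde x$ to $0$, and by Theorem~\ref{t:Theta}(c) one has $(\Theta_{(\tilde x,y)})_{*}\tilde X_{j,y}=Y_{j}+R_{j,y}$ with $R_{j,y}$ of local degree $\le 0$, whence $(\Theta_{(\tilde x,y)})_{*}\tilde L_{y}=\mathcal L_{0}+\mathcal E_{y}$ with $\mathcal E_{y}$ of local degree $\le 1$. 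Applying $\tilde L_{y}$ in the variable $\tilde x$ to $K^{(0)}_{y}(\tilde x,\tilde x_{1})$ and transferring to $G_{d,m}$, the $\mathcal L_{0}$-contribution of the principal part produces $\delta_{0}$, which, since $b_{y}\equiv1$ near $\operatorname{supp}\psi_{y}$ and the change of variables $\tilde x_{1}\mapsto\Theta_{y}(\tilde x_{1},\tilde x)$ has Jacobian $\equiv1$ along the diagonal, yields exactly $\psi_{y}I$; the $\mathcal E_{y}$-contribution, being a homogeneous operator of degree $\le1$ applied term by term to the type-$2$ kernel $\Gamma$, is a kernel of type $\ge1$; and the terms coming from differentiating the cutoff $\psi_{y}(\tilde x)$, together with the $C^{l}$-remainders in the transfer of $\Gamma$, produce kernels of type $\ge1$ (arbitrarily smooth once $l$ is large). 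Hence $\tilde L_{y}\tilde P^{(0)}_{y}=\psi_{y}I-\tilde R_{y}$ with $\{\tilde R_{y}\}$ a family of operators of type $1$, compactly supported in $\tilde U\times T$ and smooth in $y$. Running the same argument with $\tilde L_{y}$ acting on the right --- equivalently, applying it to the formal transpose of $\tilde L_{y}$, which is again of the form $\sum_{j}\tilde X_{j,y}^{2}$ modulo a first-order term, and to the transposed kernel --- gives $\tilde P^{(0)}_{y}\tilde L_{y}=\psi_{y}I-\tilde S_{y}$ with $\{\tilde S_{y}\}$ of type $1$; taking $\tilde P_{y}=\tilde P^{(0)}_{y}$ then finishes the proof. (If a manifestly symmetric two-sided parametrix is preferred, one may instead symmetrize $\tilde P^{(0)}_{y}$, the transpose of a type-$2$ family being again of type $2$.)

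The work is almost entirely bookkeeping rather than new analysis, and that is precisely where the difficulty lies: one must verify that each ingredient of the Rothschild--Stein calculus is compatible with the smooth-family formalism and holds uniformly for $y$ in compact subsets of $T$ --- that the composition of a kernel of type $\lambda_{1}$ with one of type $\lambda_{2}$ is a kernel of type $\lambda_{1}+\lambda_{2}$ (and of type $0$, i.e.\ a singular integral kernel plus a multiple of $\delta_{0}$, when the sum vanishes), that differentiation along $\tilde X_{j,y}$ lowers the type by one, that families of operators of type $\le0$ are continuous families of bounded operators on $L^{2}(\tilde U)$ (Proposition~\ref{p:L2}), and that all remainders can be made as regular as desired. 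Since the lift (Theorem~\ref{t:lifting}) and the approximation map (Theorem~\ref{t:Theta}) have already been shown to depend smoothly on $y$, and $G_{d,m}$ together with its dilations, its homogeneous norm, and $\Gamma$ are $y$-independent, each of these verifications amounts to re-reading the corresponding argument of \cite{Rothschild-Stein} with $y$ carried along as a spectator parameter; this is exactly what the remainder of the section will do.
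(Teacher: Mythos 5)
Your proposal is correct and takes essentially the same approach as the paper: both build the parametrix kernel as $\psi_y(\tilde x)\,K_0(\Theta_y(\tilde x_1,\tilde x))\,\psi'_y(\tilde x_1)$ with $K_0$ the Folland fundamental solution of $\sum Y_j^2$ on $G_{d,m}$, invoke Theorem~\ref{t:Theta}(c) and the Rothschild--Stein type-$\lambda$ calculus to identify the defects as type-$1$ families, and observe that smooth dependence on $y$ comes for free since $K_0$ is $y$-independent and only $\Theta_y$, the cutoffs, and $dv_y$ vary. Your additional remark about the degenerate case $Q\le 2$ (where Folland's theorem would not directly apply but the operators are elliptic) is a small but genuine refinement the paper leaves implicit.
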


\begin{proof}
Consider the left-invariant hypoelliptic second order differential operator  $L_0 = \sum_{j=1}^dY_j^2$ on the Lie group $G_{d,m}$. By \cite{Folland75}, there exists a unique distribution $K_0\in C^{-\infty}(G_{d,m})$, which is a fundamental solution for $L_0$:
\[
L_0(K_0)=\delta_0. 
\]
Moreover, $K_0$ is a kernel of type $2$. 

Let $\psi^\prime \in C^\infty_c(\tilde U\times T)$ be a function such that $\psi^\prime=1$ on the support of $\psi$. Define a kernel $\{\tilde K_y\}$ of type 2 on $\tilde U\times \tilde U$ by
\begin{equation}\label{e:tildeK}
\tilde K_y(\tilde x,\tilde x_1)=\psi(\tilde x,y)K_0(\Theta_y(\tilde x_1,\tilde x))\psi^\prime(\tilde x_1,y).
\end{equation}
Let $\tilde P_y$ be the corresponding family of operators of type 2 on $\tilde U$. For any $y\in T$, the operator $\tilde P_y$ is a parametrix for $\tilde L_y$ (см. \cite[Theorem 10]{Rothschild-Stein}). Smooth dependence of families $\{\tilde R_y\}$ and $\{\tilde S_y\}$ on $y$ can be easily checked, taking into account smooth dependence of $\Theta_y$ on $y$. 
\end{proof}

For any $k\in \mathbb Z_+$ and $y\in T$, we introduce the space $S^2_{k,y}(\tilde U)$, which consists of all functions $f\in L^2(\tilde U)$ such that $X_{I,y}f\in L^2(\tilde U)$ for any $I$ with $|I|\leq k$. Define a norm in this space by 
\[
\|f\|_{S^2_{k,y}}=\sum_{|I|\leq k}\|X_{I,y}f\|_{L^2}.
\]

An operator family $\{P_y : C^\infty(\overline{\tilde U})\to C^\infty(\overline{\tilde U}) : y\in T\}$ is said to be a continuous family of smoothing operators of order $\lambda\in \mathbb Z_+$, if, for any $y\in T$, the operator $P_y$ extends to a bounded operator from $S^2_{k,y}(\tilde U)$ to $S^2_{k+\lambda,y}(\tilde U)$, and, moreover, its norm is uniformly bounded in $y\in T$: 
\[
\sup_{y\in T}\|P_y: S^2_{k,y}(\tilde U)\to S^2_{k+\lambda,y}(\tilde U)\|<\infty. 
\]
As an immediate consequence of Proposition \ref{p:L2}, we get the following assertion (cf. also \cite[Theorem 11]{Rothschild-Stein}).

\begin{prop}\label{type-smooth}
Any family $\{P_y\}$ of operators of type $\lambda$ is a continuous family of smoothing operators of order $\lambda$.
\end{prop}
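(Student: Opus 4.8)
The plan is to deduce the proposition from Proposition~\ref{p:L2} together with the elementary composition and commutation rules of the Rothschild--Stein calculus, checking at each step that all constants are uniform in $y$. Two ingredients are needed. First, every family of operators of type $\lambda\geq 0$ is $L^2$-bounded uniformly in $y$: for $\lambda=0$ this is Proposition~\ref{p:L2}, and for $\lambda>0$ the kernel bound $|K_y(\tilde x,\tilde x_1)|\leq C\,\rho_y(\tilde x,\tilde x_1)^{\lambda-Q}$ together with the compact support in $\tilde U\times\tilde U\times T$ yields $L^2$-boundedness by the Schur test with a constant independent of $y$. Second, one needs the calculus identities: if $\{P_y\}$ has type $\mu\geq 1$, then $\{\tilde X_{i,y}P_y\}$ and $\{P_y\tilde X_{i,y}\}$ have type $\mu-1$; and if $\{P_y\}$ has type $\mu\geq 0$, then the commutator $\{[\tilde X_{i,y},P_y]\}$ again has type $\mu$ (the top-order contribution drops out, exactly as in the model case of left-convolution operators on $G_{d,m}$). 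These are the $y$-dependent versions of Theorems~8--11 of \cite{Rothschild-Stein}; since $\Theta_y$ depends smoothly on $y$ (Theorem~\ref{t:Theta}) and the lifted fields $\tilde X_{j,y}$ depend smoothly on $y$ (Theorem~\ref{t:lifting}), the algebraic manipulations of \cite{Rothschild-Stein} produce coefficient functions and convolution kernels depending smoothly on $y$, so the associated operator-norm estimates are uniform on $T$.

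Granting this, the heart of the argument is the following claim, proved by induction on $|I|$: for a family $\{P_y\}$ of type $\lambda$ and any multi-index $I$ one can write, as operators on $C^\infty_c(\tilde U)$,
\[
\tilde X_{I,y}\,P_y=\sum_{|J|\leq\max(0,\,|I|-\lambda)}Q_{I,J,y}\,\tilde X_{J,y},
\]
where each $\{Q_{I,J,y}\}$ has type $\geq\max(0,\,\lambda-|I|+|J|)\geq 0$ and depends continuously on $y$. The case $I=\varnothing$ is just $P_y$ itself. For the inductive step one applies one field $\tilde X_{i,y}$ on the left of each term $Q_{I,J,y}\tilde X_{J,y}$: if $Q_{I,J,y}$ has type $\geq 1$, then $\tilde X_{i,y}Q_{I,J,y}$ has type one lower and $\tilde X_{J,y}$ is kept unchanged; if $Q_{I,J,y}$ has type $0$ (which the bookkeeping forces to happen only when $|I|\geq\lambda$ and $|J|\leq|I|-\lambda$), one writes $\tilde X_{i,y}Q_{I,J,y}=Q_{I,J,y}\tilde X_{i,y}+[\tilde X_{i,y},Q_{I,J,y}]$, absorbs the first summand into $Q_{I,J,y}\tilde X_{(i,J),y}$ with $|(i,J)|=|J|+1\leq(|I|+1)-\lambda$, and uses that the commutator is again of type $0$. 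A direct check shows the index and type constraints propagate correctly, which proves the claim.

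The proposition follows immediately. For $f\in C^\infty_c(\tilde U)$ and $|I|\leq k+\lambda$ we have $\max(0,|I|-\lambda)\leq k$, hence
\[
\|\tilde X_{I,y}P_yf\|_{L^2}\leq\sum_{|J|\leq k}\|Q_{I,J,y}\|_{L^2\to L^2}\,\|\tilde X_{J,y}f\|_{L^2}\leq C\,\|f\|_{S^2_{k,y}},
\]
with $C$ independent of $y$ by the first ingredient; summing over $|I|\leq k+\lambda$ gives $\|P_yf\|_{S^2_{k+\lambda,y}}\leq C'\,\|f\|_{S^2_{k,y}}$ uniformly in $y$, and density of $C^\infty_c(\tilde U)$ in $S^2_{k,y}(\tilde U)$ lets $P_y$ extend to the asserted bounded operator, with $\sup_{y\in T}\|P_y:S^2_{k,y}(\tilde U)\to S^2_{k+\lambda,y}(\tilde U)\|<\infty$.

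The step I expect to be the real work is the uniform, parameter-dependent version of the calculus identities in the first paragraph -- in particular that the commutator of a frame field with a type-$0$ operator remains of type $0$, and that the kernels produced throughout \cite{Rothschild-Stein} depend smoothly on $y$. Everything else (the induction, the Schur-test bound for positive type, the final summation and density argument) is routine bookkeeping. Since this verification is precisely the ``the Rothschild--Stein construction goes through with a parameter'' assertion to which all of Section~\ref{s:hypo-family} is devoted, it is obtained by revisiting the relevant proofs of \cite{Rothschild-Stein} and inserting the smooth dependence on $y$ at each stage.
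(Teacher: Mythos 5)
Your proof is correct and takes essentially the same route as the paper: the paper disposes of this proposition in one line, pointing to Proposition~\ref{p:L2} and citing \cite[Theorem 11]{Rothschild-Stein}, and your inductive commutation argument with the $\max(0,\lambda-|I|+|J|)$ bookkeeping is exactly the content of that cited theorem, carried through with uniform control in $y$ via the smooth dependence of $\Theta_y$ and the lifted frame $\tilde X_{j,y}$. You simply write out in full what the paper delegates to the Rothschild--Stein reference.
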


 \subsection{Push-forward of a parametrix}
In this section, we construct a parametrix for a family of H\"ormander type operators, defined by a system of vector fields $X_{1},\ldots, X_d$, using the parametrix constructed in the previous section and a push-forward construction.

Recall that $\tilde U=U\times U^\prime$, where $U$ is a neighborhood of $0$ in $\mathbb R^p$ and $U^\prime$ is a neighborhood of $0$ in $\mathbb R^k$. We introduce the pull-back operator $E: C^\infty(\overline{U})\to C^\infty(\overline{\tilde U})$ given for $f\in C^\infty(\overline{U})$ by
\[
Ef(x, x^\prime) = f(x), \quad (x, x^\prime)\in U\times U^\prime.
\] 

We also introduce the restriction operator $R:C^\infty(\overline{\tilde  U})\to C^\infty(\overline{U})$ as follows. Fix a function $\zeta\in C^\infty_c(U^\prime)$ such that $\int \zeta(x^\prime)dx^\prime=1$, where $dx^\prime$ is the Lebesgue measure on $\mathbb R^k$. Then, for $f\in C^\infty(\overline{\tilde  U})$, we put 
\[
Rf(x)=\int_{U^\prime} f(x,x^\prime)\zeta(x^\prime)dx^\prime, \quad x\in U.
\]
The operator $R$ is a left inverse of $E$:
\[
R\circ E=I : C^\infty(\overline{U})\to C^\infty(\overline{U}). 
\]

As above, one can introduce the spaces $S^2_{k,y}(U)$ and the notion of continuous family of smoothing operators or order $\lambda$ on $U$.

\begin{prop}\label{t:ES}
For any $y\in T$, the map $E$ defines a continuous map from $S^2_{k,y}(U)$ to $S^2_{k,y}(\tilde U)$ with the norm, uniformly bounded on $y\in T$.
\end{prop}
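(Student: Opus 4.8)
The statement to prove is Proposition~\ref{t:ES}: the pull-back operator $E$ maps $S^2_{k,y}(U)$ continuously into $S^2_{k,y}(\tilde U)$ with norm uniform in $y \in T$. The plan is to argue by induction on $k$, reducing everything to how the lifted vector fields $\tilde X_{j,y}$ act on functions pulled back from $U$. The key structural fact is the form \eqref{e:lift} of the lift: $\tilde X_{i,y}(x,x') = X_{i,y}(x) + \sum_{j=1}^k u_{ij}(x,x',y)\,\partial/\partial x'_j$. Since $Ef(x,x') = f(x)$ does not depend on $x'$, every derivative $\partial/\partial x'_j$ kills $Ef$, and hence
\[
\tilde X_{i,y}(Ef) = E(X_{i,y}f).
\]
This is the engine of the whole argument: applying a basic vector field $\tilde X_{i,y}$ to a pulled-back function commutes with $E$.

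First I would handle the base case $k=0$. Here $\|Ef\|_{L^2(\tilde U)}^2 = \int_{U}\int_{U'} |f(x)|^2\,dx'\,dx$, so, up to the (finite, $y$-independent since $U'$ is a fixed bounded neighborhood) factor $\mathrm{vol}(U')$, we get $\|Ef\|_{L^2(\tilde U)} = \mathrm{vol}(U')^{1/2}\|f\|_{L^2(U)}$. Strictly one must be careful that the norm on $S^2_{k,y}(\tilde U)$ is built from the volume form $dv_y(\tilde x)$ rather than Lebesgue measure; but $dv_y$ is comparable to Lebesgue measure with constants uniform in $y$ (the Riemannian metric making $\{\tilde X_{jk,y}\}$ orthonormal depends smoothly on $y\in T$ and $\overline T$ is compact), so $L^2(\tilde U, dv_y)$ and $L^2(\tilde U)$ have uniformly equivalent norms. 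The same remark applies on $U$, so from here on I treat all $L^2$ norms as the Lebesgue ones.

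Next, the inductive step. For $|I| \le k$ with $I = (i_1,\dots,i_{|I|})$, I want to bound $\tilde X_{I,y}(Ef) = \tilde X_{i_1,y}\cdots \tilde X_{i_{|I|},y}(Ef)$ in $L^2(\tilde U)$. Here is where the general $\tilde X_{I,y}$ — unlike a single $\tilde X_{i,y}$ — does not simply pull back, because intermediate functions $X_{J,y}f$ pulled back from $U$ get hit by the $u_{ij}\,\partial/\partial x'_j$ terms with the coefficients $u_{ij}$ themselves depending on $x'$. The clean way around this is: write $\tilde X_{I,y} = (\tilde X_{i_1,y})(\tilde X_{i_2,y}\cdots \tilde X_{i_{|I|},y})$ and observe that, by the commutation identity above applied repeatedly, $\tilde X_{i_2,y}\cdots \tilde X_{i_{|I|},y}(Ef) = E(X_{i_2,y}\cdots X_{i_{|I|},y}f)$ is again pulled back from $U$; applying one more $\tilde X_{i_1,y}$ and using the commutation identity once more gives $\tilde X_{I,y}(Ef) = E(X_{I,y}f)$ exactly — \emph{no} error terms arise, precisely because at every stage the operand is a pulled-back function and $\partial/\partial x'_j$ annihilates it. Thus $\|\tilde X_{I,y}(Ef)\|_{L^2(\tilde U)} = \mathrm{vol}(U')^{1/2}\|X_{I,y}f\|_{L^2(U)}$, and summing over $|I|\le k$ yields $\|Ef\|_{S^2_{k,y}(\tilde U)} \le C\,\|f\|_{S^2_{k,y}(U)}$ with $C = \mathrm{vol}(U')^{1/2}$ (times the uniform volume-form comparison constants), independent of $y$.

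The main point to watch — really the only subtlety — is that one must use the \emph{basic} vector fields $X_{I,y} = X_{i_1,y}\cdots X_{i_{|I|},y}$ (ordered compositions, as in the definition of $S^2_{k,y}$), not iterated commutators, so the commutation identity $\tilde X_{i,y}\circ E = E\circ X_{i,y}$ can be applied literally one factor at a time with no Leibniz corrections; and one must record that the volume forms $dv_y$ and $dx'$ enter only through constants that are uniform in $y$ because $y$ ranges over (a neighborhood with compact closure in) $T$ and all the geometric data depend smoothly on $y$. Granting those bookkeeping points, the proposition is immediate, and the same argument shows $E$ maps $S^2_{k,y}(U) \to S^2_{k,y}(\tilde U)$ isometrically up to these uniform constants.
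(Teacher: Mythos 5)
Your proof is correct and takes essentially the same route as the paper: the paper's argument also reduces everything to the commutation identity $\tilde X_{I,y}E = EX_{I,y}$ (obtained exactly as you do, from the triangular form \eqref{e:lift} of the lift and the fact that $\partial/\partial x'_j$ annihilates pulled-back functions), together with the uniform boundedness of $E : L^2(U) \to L^2(\tilde U, dv_y)$. Your write-up simply records more explicitly the inductive bookkeeping that the paper leaves implicit.
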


\begin{proof}
First, we observe that $E$ defines a continuous map from $L^2(U)$ to $L^2(\tilde U, dv_y)$, uniformly bounded on $y\in T$. From the definition of $E$ and the fact that the vector fields $\tilde X_{j,y}$ are the lifts of the vector fields $X_{j,y}$ (см. \eqref{e:tildeX}), it follows that 
\[
\tilde X_{I,y}E = EX_{I,y}, \quad y\in T,
\]
that immediately implies the desired statement.
\end{proof}

\begin{prop}\label{t:RS}
For any $y\in T$, the map $R$ defines a continuous map from $S^2_{k,y}(\tilde U)$ to $S^2_{k,y}(U)$ with the norm, uniformly bounded on $y\in T$.
\end{prop}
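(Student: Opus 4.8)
The plan is to proceed by analogy with the proof of Proposition~\ref{t:ES}, except that the commutation identity relating $R$ with the relevant vector fields will no longer be exact: integrating out the fibre variable $x^\prime$ converts the vertical part of a lifted field $\tilde X_{j,y}$ into an integration by parts, producing an additional term of lower order in the $\tilde X_{j,y}$. To keep track of this, I would first introduce the class of \emph{weighted fibre integrations}
\begin{equation*}
R_\omega f(x)=\int_{U^\prime}f(x,x^\prime)\,\omega(x,x^\prime,y)\,dx^\prime,\qquad x\in U,
\end{equation*}
indexed by weights $\omega\in C^\infty(\tilde U\times T)$ that are compactly supported in the $x^\prime$-variable; thus $R=R_\zeta$, with $\zeta(x^\prime)$ viewed as such a weight. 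Two observations are needed: by Cauchy--Schwarz in $x^\prime$, every $R_\omega$ is bounded from $L^2(\tilde U)$ to $L^2(U)$ with norm controlled by $\sup_x\|\omega(x,\cdot,y)\|_{L^2(U^\prime)}$, hence bounded uniformly for $y$ in a relatively compact part of $T$; and the class of weights is closed under the operations that appear below.

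I would then establish the basic commutation relation. Since the coefficients of $X_{j,y}=\sum_i X^i_j(x,y)\,\partial/\partial x_i$ do not depend on $x^\prime$, differentiation under the integral sign gives
\begin{equation*}
X_{j,y}\circ R_\omega=R_\omega\circ (X_{j,y})_x+R_{(X_{j,y})_x\omega},
\end{equation*}
where $(X_{j,y})_x$ denotes the same first order operator now regarded as acting on functions on $\tilde U$. By the lifting formula \eqref{e:lift} one has $(X_{j,y})_x=\tilde X_{j,y}-\sum_l u_{jl}(x,x^\prime,y)\,\partial/\partial x^\prime_l$; feeding the vertical part through $R_\omega$ and integrating by parts in $x^\prime_l$ (the boundary terms vanish because $\omega$ has compact $x^\prime$-support) produces
\begin{equation*}
X_{j,y}\circ R_\omega=R_\omega\circ\tilde X_{j,y}+R_{\omega^\prime},\qquad \omega^\prime=(X_{j,y})_x\omega+\sum_l\partial_{x^\prime_l}\!\bigl(u_{jl}\,\omega\bigr),
\end{equation*}
and $\omega^\prime$ is again a weight depending smoothly on $y$, using that the $u_{jl}$ are smooth. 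Iterating this identity starting from $R=R_\zeta$, I would obtain, for every multi-index $I$ with $|I|\le k$, a finite expansion $X_{I,y}\circ R=\sum_{|J|\le|I|}R_{\omega_{I,J,y}}\circ\tilde X_{J,y}$ with weights $\omega_{I,J,y}$ depending smoothly on $y$.

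Finally I would read off the estimate: for $f\in C^\infty(\overline{\tilde U})$ and $|I|\le k$,
\begin{equation*}
\|X_{I,y}(Rf)\|_{L^2(U)}\le\sum_{|J|\le|I|}\bigl\|R_{\omega_{I,J,y}}\bigr\|_{L^2(\tilde U)\to L^2(U)}\,\|\tilde X_{J,y}f\|_{L^2(\tilde U)}\le C_y\,\|f\|_{S^2_{k,y}(\tilde U)},
\end{equation*}
and summing over $|I|\le k$ gives $\|Rf\|_{S^2_{k,y}(U)}\le C^\prime_y\,\|f\|_{S^2_{k,y}(\tilde U)}$; by density this yields the asserted continuous map. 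Because the weights $\omega_{I,J,y}$, the coefficients $u_{jl}$ and the fields $X_{j,y}$ all depend smoothly on $y$ and are carried by fixed compact sets, the constants $C^\prime_y$ can be bounded uniformly for $y\in T$, after shrinking $T$ if necessary. I expect the only genuine work to be the two commutation identities above, and in particular the verification that the operations $\omega\mapsto(X_{j,y})_x\omega$ and $\omega\mapsto\partial_{x^\prime_l}(u_{jl}\,\omega)$ keep one inside the class of weights with compact $x^\prime$-support and $y$-dependence controlled uniformly on compact subsets of $T$; once this is in place, the $L^2$-boundedness of the operators $R_\omega$ and the final summation are routine, exactly as in the treatment of $E$.
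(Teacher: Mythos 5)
Your proof is correct and follows essentially the same route as the paper's: introduce the class of weighted fibre integrations (the paper calls them operators of the form \eqref{e:R-prime}), observe their $L^2(\tilde U)\to L^2(U)$ boundedness, establish the commutation relation $X_{j,y}\circ R_\omega=R_\omega\circ\tilde X_{j,y}+R_{\omega^\prime}$ by differentiating under the integral and integrating by parts in $x^\prime$ using the lifting formula \eqref{e:lift}, then iterate and estimate. (In fact your sign for $\omega^\prime$ is the correct one; the paper's displayed formula for $R^\prime_{j,y}$ carries a spurious minus sign, and its final iterated identity should read $\tilde X_{J,y}$ rather than $X_{J,y}$, but these are typographical.)
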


\begin{proof}
First, we observe that an arbitrary operator $R^\prime :C^\infty(\overline{\tilde  U})\to C^\infty(\overline{U})$, defined for $f\in C^\infty(\overline{\tilde  U})$ by
\begin{equation}
\label{e:R-prime}
R^\prime f(x)=\int_{U^\prime}\lambda(x,x^\prime) f(x,x^\prime)dx^\prime, \quad x\in U,
\end{equation}
where $\lambda\in C^\infty(\overline{\tilde  U})$ is some function, defines a continuous map from $L^2(\tilde U)$ to $L^2(U)$. In particular, this holds for the operator $R$ itself.

Now we show that, for any operator $R^\prime$ of the form \eqref{e:R-prime}, the following formula holds:
\begin{equation}
\label{e:XkR}
X_{j,y}R^\prime= R^\prime \tilde X_{j,y} + R_{j,y}^\prime,\quad y\in T,
\end{equation}
where $\{R^\prime_{j,y}, y\in T\}$ is a smooth family of operators of the form \eqref{e:R-prime}. 

Indeed, by \eqref{e:lift}, for $f\in C^\infty(\overline{\tilde  U})$, we have
\[
R^\prime \tilde X_{j,y} f(x)=\int_{U^\prime}\lambda(x,x^\prime) \left(X_{j,y}(x)f(x,x^\prime)+\sum_{\ell=1}^k u_{j\ell,y}(x,x^\prime)\frac{\partial f}{\partial x^\prime_\ell}(x,x^\prime)\right)dx^\prime, \quad x\in U.
\]
Integrating by parts, we get the formula \eqref{e:XkR} with 
\[
R^\prime_{j,y}f(x)=-\int_{U^\prime} \left(X_{j,y}(x)\lambda(x,x^\prime)+\sum_{\ell=1}^k \frac{\partial }{\partial x^\prime_\ell}(u_{j\ell,y}(x, x^\prime)\lambda(x,x^\prime))\right) f(x,x^\prime)dx^\prime, \quad x\in U.
\]

By repeated application of the formula \eqref{e:XkR}, we obtain that, for any $I$, the formula holds:
\[
X_{I,y}R = R \tilde X_{I,y} +\sum_{|J|<|I|} R^\prime_{J,y}X_{J,y},
\]
where $\{R^\prime_{J,y}, y\in T\}$ are smooth families of operators of the form \eqref{e:R-prime}, that immediately completes the proof. 
\end{proof}

To any operator family $\{\tilde P_y : C^\infty(\overline{\tilde  U}) \to C^\infty(\overline{\tilde  U}), y\in T\}$, one can assign an operator family $\{P_y : C^\infty(\overline{U}) \to C^\infty(\overline{U}), y\in T\}$ by the formula
\[
P_y = R\circ \tilde P_y\circ E, \quad y\in T.
\]
If $\tilde K_y(\tilde x,\tilde x_1)$ is the kernel of the operator $\tilde P_y$, then the kernel $K_y(x, x_1)$ of the operator $P_y$ is given by
\[
K_y(x, x_1)=\int_{U^\prime\times U^\prime} \zeta(x^\prime)\tilde K_y((x,x^\prime), (x_1,x^\prime_1))\frac{dv_y}{dx_1}(x_1) dx^\prime\,dx^\prime_1.
\]
In particular, if the family $\{\tilde P_y, y\in T\}$ is compactly supported in $\tilde U\times T$, then the family $\{P_y, y\in T\}$ is compactly supported in $U\times T$.

\begin{prop}\label{p:tildeT-T}
If $\{\tilde P_y\}$ is a continuous family of smoothing operators of order $\lambda$ on $\tilde U$, then $\{P_y\}$ is a continuous family of smoothing operators or order $\lambda$ on $U$. 
\end{prop}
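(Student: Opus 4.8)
The plan is to read the result directly off the factorization $P_y=R\circ\tilde P_y\circ E$ by chaining the three uniform mapping estimates already established. First I would fix $k\in\mathbb Z_+$. By Proposition~\ref{t:ES}, the pull-back $E$ maps $S^2_{k,y}(U)$ boundedly into $S^2_{k,y}(\tilde U)$ with $\sup_{y\in T}\|E: S^2_{k,y}(U)\to S^2_{k,y}(\tilde U)\|<\infty$. By the hypothesis of the proposition, $\tilde P_y$ maps $S^2_{k,y}(\tilde U)$ boundedly into $S^2_{k+\lambda,y}(\tilde U)$, with operator norm uniformly bounded in $y\in T$. Finally, applying Proposition~\ref{t:RS} with $k+\lambda$ in place of $k$ (which is legitimate, since that proposition is stated for every nonnegative integer), the restriction $R$ maps $S^2_{k+\lambda,y}(\tilde U)$ boundedly into $S^2_{k+\lambda,y}(U)$, again with norm uniformly bounded in $y\in T$.

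Composing these three maps then gives that for every $y\in T$ the operator $P_y=R\circ\tilde P_y\circ E$ extends to a bounded operator $S^2_{k,y}(U)\to S^2_{k+\lambda,y}(U)$, and that
\[
\sup_{y\in T}\bigl\|P_y: S^2_{k,y}(U)\to S^2_{k+\lambda,y}(U)\bigr\|\le\Bigl(\sup_{y\in T}\|R\|\Bigr)\Bigl(\sup_{y\in T}\|\tilde P_y\|\Bigr)\Bigl(\sup_{y\in T}\|E\|\Bigr)<\infty.
\]
Since $k\in\mathbb Z_+$ was arbitrary, this is precisely the assertion that $\{P_y\}$ is a continuous family of smoothing operators of order $\lambda$ on $U$. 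If, in addition, $\{\tilde P_y\}$ is compactly supported in $\tilde U\times T$, then $\{P_y\}$ is compactly supported in $U\times T$, which is immediate from the explicit kernel formula for $K_y$ recorded just before the proposition.

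There is no genuine obstacle here: all of the analytic content has been absorbed into Propositions~\ref{t:ES} and~\ref{t:RS}, whose proofs in turn rest on the intertwining identities $\tilde X_{I,y}E=EX_{I,y}$ and $X_{I,y}R=R\tilde X_{I,y}+\sum_{|J|<|I|}R^\prime_{J,y}X_{J,y}$. The only point demanding a modicum of care is that $R$ must be invoked at the shifted order $k+\lambda$ rather than at order $k$; this is harmless, because the bound in Proposition~\ref{t:RS} is uniform both in $y$ and in the Sobolev order, so no loss of uniformity occurs when the three estimates are composed.
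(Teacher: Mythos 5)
Your proof is correct and matches the paper's intent exactly: the paper's own proof is the one-line remark that the proposition ``is a simple consequence of Propositions~\ref{t:ES} and \ref{t:RS},'' and your chaining of the three uniform bounds through the factorization $P_y=R\circ\tilde P_y\circ E$ is precisely that argument spelled out. One small overstatement in your final paragraph: the bound in Proposition~\ref{t:RS} is uniform in $y$ for each fixed $k$, not claimed to be uniform across Sobolev orders $k$ — but this is harmless, since the definition of a continuous family of smoothing operators only requires uniformity in $y$ at each fixed order.
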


\begin{proof}
The statement is a simple consequence of Propositions~\ref{t:ES} and \ref{t:RS}. 
\end{proof}

Consider a smooth family of H\"ormander sum of squares type operators in  $U$ given by
\begin{equation}\label{e:Ly}
L_y=\sum_{j=1}^d X^2_{j,y}, \quad y\in T,
\end{equation}
The operator $\tilde L_y$ on $\tilde U$ given by \eqref{e:tildeLy} is the lift of the operator $L_y$: 
\[
L_y = R\tilde L_yE, \quad y\in T.
\]

\begin{theorem}\label{t:paramLy}
For any $\phi\in C^\infty_c(U\times T)$, there exists a smooth family $\{P_y\}$ of smoothing operators of order $2$ on $U$, compactly supported in $U\times T$ such that 
\begin{equation}\label{e:LP-PL}
L_yP_y=\phi_y I-R_y,\quad P_yL_y=\phi_y I-S_y,
\end{equation}
where $\{R_y\}$ and $\{S_y\}$ are smooth families of smoothing operators of order 1 on $U$, compactly supported in $U\times T$. 
\end{theorem}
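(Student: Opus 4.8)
The plan is to obtain the parametrix for $L_y$ on $U$ by pushing forward the parametrix $\{\tilde P_y\}$ for $\tilde L_y$ on $\tilde U$ via the maps $E$ and $R$. Given $\phi\in C^\infty_c(U\times T)$, first choose $\theta\in C^\infty_c(U^\prime)$ with $\int_{U^\prime}\zeta(x^\prime)\theta(x^\prime)\,dx^\prime=1$ and apply Theorem~\ref{t:tilde-param} with $\psi_y(x,x^\prime)=\phi_y(x)\theta(x^\prime)$ (and $\psi^\prime$ equal to $1$ near $\operatorname{supp}\psi$), obtaining a family $\{\tilde P_y\}$ of operators of type $2$ on $\tilde U$, compactly supported in $\tilde U\times T$, with $\tilde L_y\tilde P_y=\psi_yI-\tilde R_y$ and $\tilde P_y\tilde L_y=\psi_yI-\tilde S_y$, where $\{\tilde R_y\},\{\tilde S_y\}$ are of type $1$. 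Set $P_y=R\circ\tilde P_y\circ E$. By Proposition~\ref{type-smooth} and Proposition~\ref{p:tildeT-T}, $\{P_y\}$ is a (smooth, by the smooth $y$-dependence of $\Theta_y$ and of the vector fields) family of smoothing operators of order $2$ on $U$, and it is compactly supported in $U\times T$ because $E$ and $R$ are local in the $x$-variable and $\psi,\psi^\prime,\tilde P_y$ are compactly supported. Note also that the choice of $\theta$ gives $R\,\psi_y\,E=\phi_yI$.

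The computation rests on two intertwining identities already available in the text. From the form \eqref{e:lift} of the lifted fields, $\tilde X_{j,y}E=EX_{j,y}$, hence $\tilde L_yE=EL_y$; from \eqref{e:XkR}, $X_{j,y}R=R\tilde X_{j,y}+R^\prime_{j,y}$ with each $R^\prime_{j,y}$ of the form \eqref{e:R-prime}, hence $L_yR=R\tilde L_y+A_y$ with $A_y=\sum_{j=1}^d\bigl(X_{j,y}R^\prime_{j,y}+R^\prime_{j,y}\tilde X_{j,y}\bigr)$. Using these,
\[
P_yL_y=R\,\tilde P_y\,\tilde L_y\,E=R(\psi_yI-\tilde S_y)E=\phi_yI-S_y,\qquad S_y:=R\,\tilde S_y\,E,
\]
\[
L_yP_y=(R\tilde L_y+A_y)\tilde P_y E=R(\psi_yI-\tilde R_y)E+A_y\tilde P_yE=\phi_yI-R_y,\qquad R_y:=R\,\tilde R_y\,E-A_y\tilde P_yE,
\]
which gives \eqref{e:LP-PL}; it remains only to control $S_y$ and $R_y$.

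Finally I would verify that $\{S_y\}$ and $\{R_y\}$ are smooth families of smoothing operators of order $1$ on $U$, compactly supported in $U\times T$, with uniform bounds in $y$. For $S_y=R\tilde S_yE$ and for the summand $R\tilde R_yE$ of $R_y$: $\{\tilde S_y\},\{\tilde R_y\}$ are of type $1$, hence smoothing of order $1$ (Proposition~\ref{type-smooth}), so Proposition~\ref{p:tildeT-T} applies with $\lambda=1$. For $A_y\tilde P_yE$: by Propositions~\ref{t:ES} and~\ref{type-smooth}, $E$ maps $S^2_{k,y}(U)$ into $S^2_{k,y}(\tilde U)$ and $\tilde P_y$ maps that into $S^2_{k+2,y}(\tilde U)$, uniformly in $y$; then each summand $R^\prime_{j,y}\tilde X_{j,y}$ of $A_y$ maps $S^2_{\ell,y}(\tilde U)$ into $S^2_{\ell-1,y}(U)$ (the field $\tilde X_{j,y}$ costs one order, while $R^\prime_{j,y}$, being of the form \eqref{e:R-prime}, preserves the $S^2$-scale by the argument of Proposition~\ref{t:RS}), and similarly $X_{j,y}R^\prime_{j,y}$ maps $S^2_{\ell,y}(\tilde U)$ into $S^2_{\ell-1,y}(U)$; hence $A_y\tilde P_yE$ is smoothing of order $1$. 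Uniformity in $y$ is obtained by shrinking $T$ to a relatively compact neighbourhood of the origin, which bounds all the coefficient data ($u_{j\ell,y}$, the weights of the $R^\prime_{j,y}$, the densities $dv_y$, the maps $\Theta_y$) together with their derivatives; compact support of $S_y,R_y$ in $U\times T$ follows exactly as for $P_y$. The main obstacle here is purely bookkeeping — propagating the ``gain of one order'' and the uniform-in-$y$ estimates through the push-forward $R(\cdot)E$ and through the correction operators $A_y$; once the mapping properties of $E$, $R$, and operators of the form \eqref{e:R-prime} on the scales $S^2_{k,y}$ are established (Propositions~\ref{t:ES}--\ref{p:tildeT-T}), the proof reduces to the two displayed identities.
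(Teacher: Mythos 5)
Your proof follows the same route as the paper: choose $\psi$ so that $R\psi_y E=\phi_y I$, lift $\phi$ to $\tilde U$, apply Theorem~\ref{t:tilde-param}, and push forward via $P_y=R\tilde P_yE$. The identity $P_yL_y=\phi_yI-R\tilde S_yE$ is immediate from $\tilde L_yE=EL_y$, exactly as in the paper. Where you have been \emph{more careful} than the printed proof is in the other half: the paper's proof asserts $R_y=R\tilde R_yE$, but since $L_yR=R\tilde L_y+A_y$ with $A_y\neq 0$ (this is precisely the content of \eqref{e:XkR}), one actually gets
\[
L_yP_y=R\tilde L_y\tilde P_yE+A_y\tilde P_yE=\phi_yI-\bigl(R\tilde R_yE-A_y\tilde P_yE\bigr),
\]
so the correction $-A_y\tilde P_yE$ genuinely belongs in $R_y$. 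Your accounting of this term is correct, and your argument that it is smoothing of order $1$ is the right one: $\tilde P_yE$ gains two orders on the $S^2_{k,y}$ scale (Propositions~\ref{t:ES} and~\ref{type-smooth}), each summand $X_{j,y}R'_{j,y}$ or $R'_{j,y}\tilde X_{j,y}$ of $A_y$ loses one order but stays on the scale (the argument of Proposition~\ref{t:RS} applies to any weight of the form \eqref{e:R-prime}, not just $\zeta$), and uniformity in $y$ follows from shrinking $T$. The only minor difference from the paper is your normalization $\int\zeta\theta\,dx'=1$ versus the paper's $\zeta'=1$ on $\operatorname{supp}\zeta$; both achieve $R\psi_yE=\phi_yI$. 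In short, the proposal is correct and fills in a gap that the paper's terse statement ``with $R_y=R\tilde R_yE$'' leaves implicit.
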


\begin{proof}
For a given $\phi\in C^\infty_c(U\times T)$, we define a function $\psi\in C^\infty_c(\tilde U\times T)$ by $\psi(x,x^\prime,y) = \phi(x,y)\zeta^\prime(x^\prime)$, where $\zeta^\prime \in C^\infty_c(U^\prime)$ is a function such that $\zeta^\prime=1$ on the support of $\zeta$. By Theorem~\ref{t:tilde-param}, there exists a smooth family $\{\tilde P_y\}$ of integral operators of type $2$ on $\tilde U$ such that 
\begin{equation}\label{e:tildeLP-PL}
\tilde L_y\tilde P_y=\psi_y I-\tilde R_y,\quad \tilde P_y\tilde L_y=\psi_y I-\tilde S_y,
\end{equation}
where $\{\tilde R_y\}$ and $\{\tilde S_y\}$ are smooth families of integral operators of type 1 on $\tilde U$. 

For any $y\in T$, we define an operator $P_y$ on $U$ by
\[
P_y=R\tilde P_yE.
\]
The relations \eqref{e:tildeLP-PL} and the properties of the operators $E$ and $R$ imply the validity of the relations \eqref{e:LP-PL} with $
R_y=R\tilde R_yE$ and $P_y=R\tilde P_yE$, and the proof is completed with the use of Proposition \ref{p:tildeT-T}.
\end{proof}

\subsection{Proof of Theorem~\ref{t:multi2}} In this section, we complete the proof of Theorem~\ref{t:param-local}. As shown in Section~\ref{s:local},  thus we complete the proof of Theorem~\ref{t:param}, which, as shown in Section \ref{s:regular}, in its turn, implies Theorem~\ref{t:multi2}. 

Let us briefly recall our arguments. Let $m\in M$. As in Section \ref{s:lifting}, let $\Omega_0$ be a neighborhood of $m$, on which a local orthonormal frame $X_1,\ldots, X_d$ in $H$ is defined. We assume that $\Omega_0$ is a foliated coordinate neighborhood, $\Omega_0\cong U_0\times T_0$, $U_0\subset \mathbb R^p$,  $T_0\subset \mathbb R^q$, and $m$ corresponds to $(0,0)\in U_0\times T_0$. In Theorem~\ref{t:lifting}, we constructed a system $\tilde X_1,\ldots, \tilde X_d$ of the lifted vector fields, defined in a neighborhood $\tilde U\times T$, where $\tilde U=U\times U^\prime\subset \mathbb R^N$, $U$ is a neighborhood of the origin in $U_0$, $T$ is a neighborhood of the origin in $T_0$, $U^\prime$ is a neighborhood of the origin in $\mathbb R^k$. Choose $U$ and $\tilde U$ such that the assertion of Theorem \ref{t:Theta} holds. Then, in the neighborhoods $U\times T$ and $\tilde U\times T$, one can construct parametrices for the operators $L$ and $\tilde L$, respectively. In particular, in the foliated coordinate neighborhood $\Omega\cong U\times T$, the assertion of Theorem \ref{t:paramLy} holds. It is in this neighborhood that the validity of Theorem~\ref{t:param-local} will be proved.  

Recall (see \eqref{e:Delta-local}) that, in the foliated coordinate neighborhood $\Omega\cong U\times T$, the operator family 
$\{\Delta_y: y\in T\}$ has the form
\[
\Delta_y=-L_y+\sum_{j=1}^da_{j,y}X_{j,y}+b_y, \quad y\in T, 
\]
where $a_j,b\in C^\infty(U\times T)$. Therefore, Theorem \ref{t:paramLy} immediately implies the following statement. 

\begin{prop}
For any $\phi\in C^\infty_c(U\times T)$, there exists a smooth family $\{P_y\}$ of smoothing operators of order $2$ on $U$, compactly supported in  $U\times T$, such that 
\[
\Delta_yP_y=\phi_y I-R^\prime_y,\quad P_y\Delta_y=\phi_y I-S^\prime_y,
\]
where $\{R^\prime_y\}$, $\{S^\prime_y\}$ are smooth families of smoothing operators of order $1$ on $U$, compactly supported in $U\times T$.
\end{prop}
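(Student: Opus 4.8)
The plan is to view $\Delta_y$ as a first order perturbation of the sub-Laplacian $-L_y$ and to reuse the parametrix supplied by Theorem~\ref{t:paramLy}. Recall from \eqref{e:Delta-local} that in the foliated chart $\Omega\cong U\times T$ one has $\Delta_y=-L_y+A_y$, where $L_y=\sum_{j=1}^dX^2_{j,y}$ is the operator \eqref{e:Ly} and $A_y=\sum_{j=1}^da_{j,y}X_{j,y}+b_y$ is a first order differential operator whose coefficients $a_j,b\in C^\infty(U\times T)$ depend smoothly on $y$. Applying Theorem~\ref{t:paramLy} to the given $\phi$, we obtain a smooth family $\{P^0_y\}$ of smoothing operators of order $2$ on $U$, compactly supported in $U\times T$, with $L_yP^0_y=\phi_yI-R_y$ and $P^0_yL_y=\phi_yI-S_y$, where $\{R_y\}$ and $\{S_y\}$ are smooth families of smoothing operators of order $1$, compactly supported in $U\times T$. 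We then set $P_y:=-P^0_y$; this is again a smooth, compactly supported family of smoothing operators of order $2$.

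With this choice one computes directly that $\Delta_yP_y=\phi_yI-R^\prime_y$ and $P_y\Delta_y=\phi_yI-S^\prime_y$ with $R^\prime_y=R_y+A_yP^0_y$ and $S^\prime_y=S_y+P^0_yA_y$, so it only remains to verify that $\{R^\prime_y\}$ and $\{S^\prime_y\}$ are smooth families of smoothing operators of order $1$, compactly supported in $U\times T$. Compact support is immediate, since $A_y$ is a differential operator, hence local, and $\{P^0_y\}$ is compactly supported; smooth dependence on $y$ is inherited from that of $a_{j,y},b_y$ and $\{P^0_y\}$. As for the order, $\{R_y\}$ and $\{S_y\}$ are of order $1$ by construction, so the point is to control $A_yP^0_y$ and $P^0_yA_y$. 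The first is handled at once by the mapping properties on the spaces $S^2_{k,y}(U)$: $P^0_y$ sends $S^2_{k,y}(U)$ to $S^2_{k+2,y}(U)$, each $X_{j,y}$ maps $S^2_{k+2,y}(U)$ boundedly to $S^2_{k+1,y}(U)$, and multiplication by a smooth function preserves every $S^2_{\ell,y}(U)$, all uniformly in $y$; hence $A_yP^0_y$ maps $S^2_{k,y}(U)$ to $S^2_{k+1,y}(U)$ uniformly in $y$, i.e. $\{A_yP^0_y\}$ is a smoothing family of order $1$.

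The only point requiring a little care is the left composition $P^0_yA_y$ appearing in $S^\prime_y$, because $A_y$ now sits on the ``wrong'' side. To deal with it, move the zeroth order factors past the vector fields: writing $M_a$ for multiplication by $a$, we have $A_y=\sum_{j=1}^dX_{j,y}M_{a_{j,y}}+M_{\tilde b_y}$ with $\tilde b_y=b_y-\sum_{j=1}^dX_{j,y}a_{j,y}\in C^\infty(U\times T)$, so that $P^0_yA_y=\sum_{j=1}^d(P^0_yX_{j,y})M_{a_{j,y}}+P^0_yM_{\tilde b_y}$. The term $P^0_yM_{\tilde b_y}$ is of order $2$ (hence of order $1$), and each $M_{a_{j,y}}$ is bounded on all the $S^2_{\ell,y}(U)$ uniformly in $y$, so it suffices to show that $\{P^0_yX_{j,y}\}$ is a smoothing family of order $1$. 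This follows from the push-forward construction behind Theorem~\ref{t:paramLy}: one has $P^0_y=R\circ\tilde P_y\circ E$ with $\{\tilde P_y\}$ a family of operators of type $2$ on $\tilde U$ (Theorem~\ref{t:tilde-param}), and $EX_{j,y}=\tilde X_{j,y}E$ by the lifting formula \eqref{e:lift}; since composing an operator of type $2$ with one of the lifted vector fields on the right lowers the type to $1$ (the parametrized version of the corresponding fact in \cite{Rothschild-Stein}, proved exactly as the other statements of this section), the family $\{\tilde P_y\tilde X_{j,y}\}$ is of type $1$ on $\tilde U$, hence a smoothing family of order $1$ there by Proposition~\ref{type-smooth}, and therefore $\{P^0_yX_{j,y}\}=\{R(\tilde P_y\tilde X_{j,y})E\}$ is a smoothing family of order $1$ on $U$ by Proposition~\ref{p:tildeT-T}. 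Combining these observations gives the proposition.

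I expect this last step --- that composing the parametrix $P^0_y$ with a single vector field on the right produces a family gaining one order of regularity --- to be the only place where one cannot argue purely formally from the abstract mapping properties of $\{P^0_y\}$ on the spaces $S^2_{k,y}(U)$, and where one genuinely has to return to the Rothschild--Stein type calculus on $\tilde U$; however, that calculus has already been set up (with the parameter $y$) in the preceding subsections, so the verification is routine.
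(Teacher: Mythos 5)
Your proof fills in exactly what the paper leaves implicit when it asserts that Theorem~\ref{t:paramLy} ``immediately implies'' this proposition: take the parametrix $P^0_y$ for $L_y$, set $P_y=-P^0_y$, and absorb the first-order perturbation $A_y$ into the error terms. You also correctly identify and resolve the one non-automatic point --- that boundedness of $P^0_y A_y$ from $L^2(U)=S^2_{0,y}(U)$ to $S^2_{1,y}(U)$ does not follow from the abstract $S^2_{k,y}$ mapping properties because the scale starts at $k=0$, so one must return to the Rothschild--Stein kernel calculus and use that $\tilde P_y\tilde X_{j,y}$ is of type $1$, pushed forward via $R,E$ --- which is precisely the mechanism that makes the paper's ``immediately'' actually work.
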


Standard iterative arguments (cf. \cite[Corollary 17.14]{Rothschild-Stein}) allow one to improve the parametrix. Since these arguments are purely algebraic, they can be easily extended to smooth operator families.  

\begin{prop}\label{p:Pl}
For any $\phi\in C^\infty_c(U\times T)$ and $l\in \mathbb N$, there exists a smooth family $\{P_{l,y}\}$ of smoothing operators of order $l$ on $U$, compactly supported in $U\times T$, such that 
\[
\Delta_yP_{l,y}=\phi_y I-R_{l,y},\quad P_{l,y}\Delta_y=\phi_y I-S_{l,y},
\]
where $\{R_{l,y}\}$, $\{S_{l,y}\}$ are smooth families of smoothing operators of order $l$ on $U$, compactly supported in $U\times T$.
\end{prop}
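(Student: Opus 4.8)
The plan is to run the standard iterative improvement of a parametrix, following \cite[Corollary~17.14]{Rothschild-Stein}, starting from the preceding proposition (in which the errors are already of order~$1$), and to verify that the one new feature — smooth dependence on $y$ together with $y$-uniform operator-norm bounds — is preserved by all the operations involved. The basic stability facts, immediate from the definitions of the spaces $S^2_{k,y}(U)$ and of a continuous (resp. smooth) family of smoothing operators of order $\lambda$, are: a finite sum of such families of orders $\ge\lambda$ is again one of order $\lambda$; and the composition of a family $\{P_y\}$ of order $\lambda$ with a family $\{P'_y\}$ of order $\mu$ is a family of order $\lambda+\mu$, since
\[
\|P_yP'_y\|_{S^2_{k,y}\to S^2_{k+\lambda+\mu,y}}\le \|P_y\|_{S^2_{k+\mu,y}\to S^2_{k+\mu+\lambda,y}}\,\|P'_y\|_{S^2_{k,y}\to S^2_{k+\mu,y}},
\]
and the right-hand side has finite supremum over $y\in T$ when each factor does; smoothness in $y$ and compactness of supports (inside one fixed compact subset of $U\times T$) are obviously preserved by finite sums and compositions. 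Once these are in hand, the argument of \cite[Cor.~17.14]{Rothschild-Stein} is purely algebraic and carries over verbatim to families.

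Concretely, I would argue by induction on $l$. Fix a finite chain $\phi=\chi_0,\chi_1,\dots,\chi_l\in C^\infty_c(U\times T)$ with $\chi_{i+1}\equiv1$ on a neighbourhood of $\operatorname{supp}\chi_i$ and all $\chi_i$ supported in a fixed compact subset of $U\times T$. For each $i$ apply the preceding proposition with $\chi_i$ in place of $\phi$, obtaining a smooth, compactly supported family $\{Q^{(i)}_y\}$ of smoothing operators of order $2$ with $\Delta_yQ^{(i)}_y=\chi_{i,y}I-A^{(i)}_y$, $Q^{(i)}_y\Delta_y=\chi_{i,y}I-B^{(i)}_y$, and $\{A^{(i)}_y\},\{B^{(i)}_y\}$ of order $1$; from the Rothschild–Stein construction (the error families are push-forwards, via the operators $R$ and $E$, of $\tilde L_y\tilde P_y-\psi_yI$ and $\tilde P_y\tilde L_y-\psi_yI$, whose kernels carry the cutoff $\psi$ in the appropriate variable) one reads off that the output $x$-support of $A^{(i)}_y$, and the input $x_1$-support of $B^{(i)}_y$, lie in $\operatorname{supp}\chi_i$. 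Given a level-$l$ parametrix with $\Delta_yP_{l,y}=\phi_yI-R_{l,y}$, where $R_{l,y}$ is of order $l$ with $x$-support in $\operatorname{supp}\chi_{l-1}$, set $P_{l+1,y}=P_{l,y}+Q^{(l)}_yR_{l,y}$; using $\chi_{l,y}R_{l,y}=R_{l,y}$ (valid since $\chi_l\equiv1$ on the $x$-support of $R_{l,y}$),
\[
\Delta_yP_{l+1,y}=\phi_yI-A^{(l)}_yR_{l,y},
\]
and the new error $A^{(l)}_yR_{l,y}$, a composition of an order-$1$ and an order-$l$ family, is of order $l+1$ with $x$-support in $\operatorname{supp}\chi_l$. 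The left identity is improved by the mirror operation (composing a suitable $\hat Q^{(l)}_y$ on the other side, using the input-support property of $B^{(l)}_y$), and the two one-sided improvements are merged into a single two-sided parametrix by the usual comparison of a left and a right parametrix of $\Delta_y$ — purely algebraic, and, if necessary, carried a couple of extra steps on each side so that the reconciled errors are still of order $l$. Iterating from the case $l=1$, one reaches errors of order $l$; and since $P_{l,y}$ is by construction a finite sum of compositions of the order-$2$ families $Q^{(i)}_y$ with order-$1$ error families, it is a smooth, compactly supported family of smoothing operators, which is the assertion.

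The point to stress is that there is no new analytic content: the genuinely hard work — existence of the local parametrix, its $L^2$- and $S^2_{k,y}$-mapping properties, and continuity in $y$ — is already done in Theorem~\ref{t:paramLy} and Propositions~\ref{p:L2}, \ref{type-smooth} and \ref{p:tildeT-T}. What needs care is bookkeeping: keeping every operator produced by the iteration supported in one fixed compact subset of $U\times T$ (this is the role of the nested chain $\chi_0,\dots,\chi_l$), and matching the left and right improvements into a single two-sided parametrix. I expect this last reconciliation — the cutoff juggling in the comparison of left and right parametrices — to be the most delicate part to write out in full, though it is entirely standard and, being algebraic, goes through for smooth families exactly as in \cite{Rothschild-Stein}.
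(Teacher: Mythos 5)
Your proposal is correct and follows the same route as the paper, which itself gives only the one-sentence justification that the iterative improvement of \cite[Corollary~17.14]{Rothschild-Stein} is purely algebraic and so carries over to smooth families; you have simply unwound that iteration (nested cutoffs, compose the order-$2$ parametrix with the running error, merge left and right improvements) and verified that sums, compositions, smoothness in $y$ and compact supports are all preserved, which is exactly the content the paper is appealing to.
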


Now we are ready to complete the proof of Theorem~\ref{t:param-local}. Let us consider the standard Sobolev spaces $L^2_k(U)$, which consist of all functions $f\in L^2(U)$ such that $\frac{\partial^{|\alpha|}f}{\partial x_1^{\alpha}\ldots\partial x_p^{\alpha_p}}\in L^2(U)$ for every multi-index $\alpha\in \mathbb Z^p_+$ wtih $|\alpha|\leq k$, equipped with the norm
\[
\|f\|^2_{L^2_k}=\sum_{|\alpha|\leq k} \left\|\frac{\partial^{|\alpha|}f}{\partial x_1^{\alpha}\ldots\partial x_p^{\alpha_p}} \right\|^2_{L^2}.
\]
It is easy to see that, for any $k\in \mathbb N$ and $y\in T$, there is an inclusion  
\begin{equation}\label{e:embed}
S^2_{km,y}(U)\subset L^2_k(U).  
\end{equation}
The embedding operators \eqref{e:embed} are uniformly bounded in $y\in T$: for any $k\in \mathbb N$, there exists a constant $C_k>0$ such that 
\begin{equation}\label{e:embed1}
\|f\|_{L^2_k}\leq C_k\|f\|_{S^2_{km,y}}, \quad f\in S^2_{km,y}(U), \quad y\in T.  
\end{equation}
Indeed, by assumption, any vector field $\frac{\partial}{\partial x_j}$ can be represented as a linear combination of vector fields $X_{[I],y}$ with $|I|\leq m$, and, hence, as a linear combination of differential operators  $X_{I,y}$ with $|I|\leq m$. The coefficients of these linear combinations are smooth functions on $U$, smoothly depending on $y\in T$. Therefore, any differential operator $\frac{\partial^{|\alpha|}}{\partial x_1^{\alpha}\ldots\partial x_p^{\alpha_p}}$ can be represented as a linear combinations of differential operators $X_{I}$ с $|I|\leq m |\alpha|$, whose coefficients are smooth functions on $U$, smoothly depending on $y\in T$. This easily implies the estimate  \eqref{e:embed1}.

Let us show that, for sufficiently large $l$, the family $\{Q_y=P_{l,y}: y\in T\}$ is the desired family of parametrices that allows us to prove Theorem~\ref{t:param-local} c $R_y=R_{l,y}, S_y=S_{l,y}$. 

Let $l>m+2$. Then each operator family $R_y$, $S_y$, $\Delta_y R_y$ and $\Delta_y S^*_y$ is a family of bounded operators from $L^2(U)$ to $S^2_{l,y}(U)$ with the norm, uniformly bounded on $y\in T$. Taking the compositions of these families with the uniformly bounded family of the embedding operators \eqref{e:embed}, we obtain families of bounded operators from $L^2(U)$ to $L^2_1(U)$ with the norm, uniformly bounded in $y\in T$. Since each of these operators maps the space $C^\infty(\bar U)$ to itself, one can show that these families of bounded operators from  $L^2(U)$ to $L^2_1(U)$ are continuous in the strong operator topology of the space $\mathcal L(L^2(U), L^2_1(U))$. Taking the compositions of the operators with the compact embedding operator $L^2_1(U)\hookrightarrow L^2(U)$, we conclude that all the operators under consideration are compact in $L^2(U)$ and the corresponding families of bounded operators in $L^2(U)$ are continuous in the uniform operator topology of $\mathcal L(L^2(U))$.   

For the family $Q_y$, the proof is obtained with similar arguments, using a more refined assertion that, for some $\varepsilon>0$, the uniformly bounded on $y$ embedding $S^2_{1,y}(U)\subset L^2_\varepsilon(U)$ holds. This assertion is a particular case of the subelliptic estimates established in  \cite{hypo} (see Theorem \ref{t:Hs-hypo}). It can be also deduced by generalizing the proof of Theorem 13 from \cite{Rothschild-Stein} to the families case. Thus, the proof of Theorem~\ref{t:param-local} is completed.

\end{document}